\newtheorem{theorem}{Theorem}[section]
\newtheorem{lemma}[theorem]{Lemma}
\newtheorem{proposition}[theorem]{Proposition}
\newtheorem{corollary}[theorem]{Corollary}
\newtheorem{remark}[theorem]{Remark}
\newcommand{\Rn}{{\mathbb{R}^n}}
\newcommand{\R}{{\mathbb{R}}}
\newcommand{\e}{{\epsilon}}
\newcommand{\g}{{\gamma}}
\newcommand{\ka}{{\kappa}}
\newcommand{\pa}{{\partial}}
\newcommand{\C}{{\bf{C}}}
\newcommand{\M}{{\mathcal{M}}}
\newcommand{\cc}{{\bf{c}}}
\newcommand{\dd}{{\rm{d}}}
\newcommand{\SSS}{{S_{p,n}}}
\newcommand{\SSp}{{S^p_{p,n}}}
\newcommand{\SSpm}{{S^{-p}_{p,n}}}
\newcommand{\vertiii}[1]{{\left\vert\kern-0.25ex\left\vert\kern-0.25ex\left\vert #1 
    \right\vert\kern-0.25ex\right\vert\kern-0.25ex\right\vert}}
\newcommand{\vphi}{{\varphi}}
\newcommand{\veee}{{c_0v_{\lambda_0, y_0}}}
\newcommand{\vone}{{v_{\lambda_0, y_0}}}
\newcommand{\LL}{{\mathcal{L}}}
\newcommand{\DIVV}{{\rm div \,}}
\newcommand{\spann}{{\rm span \,}}
\newcommand{\pl}{\partial_{\lambda}}
\newcommand{\bs}{\backslash}
\newcommand{\Om}{\Omega}
\newcommand{\om}{\omega}
\newcommand{\na}{{\nabla}}
\numberwithin{equation}{section}
\begin{document}

\title[Gradient stability for the Sobolev inequality: the case $p\geq 2$]{Gradient stability for the Sobolev inequality:\\ the case $p\geq 2$}

\author[Figalli]{Alessio Figalli}
\address[Alessio Figalli]{\newline
 \indent  Department of Mathematics, ETH Z\"{u}rich, \newline 
\indent  HG G 63.2, R\"{a}mistrasse 101, CH-8092 Z\"{u}rich,
Switzerland}
\email{alessio.figalli@math.ethz.ch}

\author[Neumayer]{Robin Neumayer}
\address[Robin Neumayer]{ \newline
\indent  Department of Mathematics, The University of Texas at Austin,  \newline
\indent 2515 Speedway Stop C1200, Austin, TX 78712, USA}
\email{rneumayer@math.utexas.edu}

\begin{abstract}
We prove a strong form of the quantitative Sobolev inequality in $\mathbb{R}^n$ for $p\geq 2$, where the deficit of a function $u\in \dot W^{1,p} $ controls $\| \nabla u -\nabla v\|_{L^p}$ for an extremal function $v$ in the Sobolev inequality.
  \end{abstract}

\maketitle

\section{Introduction}
Given $n\geq 2$ and $1\leq p<n$, the Sobolev inequality provides a control of the $L^r$ norm of a function in terms of a suitable $L^p$ norm of its gradient.
 More precisely, setting $p^*:=np/(n-p)$,
one defines the homogeneous Sobolev space $\dot W^{1,p}$ as the space of functions in $\R^n$ such that $u \in L^{p^*}$ and $|\na u|\in L^p$.
Then the following holds:
 \begin{equation}\label{Sobolev} \| \na u \|_{L^p} \geq S_{p,n} \|u\|_{L^{p^*}}\qquad \forall\,u \in \dot W^{1,p}. \end{equation}
Throughout the paper, all the integrals and function spaces will be over $\Rn$, so we will omit the domain of integration when no confusion arises.

It is well known that the optimal constant in  \eqref{Sobolev} is given by
$$\SSS = \sqrt{\pi} n^{1/p}\left(\frac{n-p}{p-1}\right)^{(p-1)/p} \left(\frac{\Gamma(n/p)\Gamma(1+n - n/p)}{\Gamma(1+ n/2)\Gamma(n)}\right)^{1/n},$$
and that equality is attained in \eqref{Sobolev}  if and only if $u$ belongs to the family of functions
$$cv_{\lambda, y}(x) = c\lambda^{n/p^*} v_1(\lambda(x-y)),\qquad c \in \R,\,\lambda\in\R_+,\,y\in\R^n,$$
where
\begin{equation}\label{v1}
v_1(x) := \frac{\kappa_0}{(1+|x|^{p'})^{(n-p)/p}},
\end{equation}
 see \cite{talenti1976best, aubin1976} and \cite{cordero2004mass} (here $\kappa_0$ is chosen so that $\|v_1\|_{L^{p^*}}=1$, therefore $\|cv_{\lambda, y}\|_{L^{p^*}}=c$, and $p' := p/(p-1)$ denotes the H\"{o}lder conjugate of $p$). % In \cite{cordero2004mass} it was shown that equality is attained \textit{only} by this family of functions
In other words, 
 \begin{equation}\label{M}
 \M := \left\{ cv_{\lambda, y} : c\in \R,\, \lambda \in \R_+,\, y \in \Rn \right\}
 \end{equation}
is the $(n+2)$-dimensional manifold of extremal functions in the Sobolev inequality \eqref{Sobolev}. 

To quantify how close a function $u\in \dot W^{1,p}$ is to achieving equality in \eqref{Sobolev},
we define its \textit{deficit} to be the $p$-homogeneous functional
$$\delta(u) := \| \na u\|_{L^p}^p - \SSp\|u\|_{L^{p^*}}^p .$$
By \eqref{Sobolev}, the deficit is nonnegative and equals zero if and only if $v\in \M$.
In \cite{brezis1985sobolev}, Brezis and Lieb raised the question of \textit{stability} for the Sobolev inequality, that is, whether the deficit controls an appropriate distance between a function $u\in \dot W^{1,p}$ and the family of extremal functions.

This question was first answered in the case $p=2$ by Bianchi and Egnell in \cite{BianchiEgnell91}: there, they showed that the deficit of a function $u$ controls the $L^2$ distance between the gradient of $u$ and the gradient of  closest extremal function $v$. The result is optimal both in the strength of the distance and the exponent of decay. However, their proof is very specific to the case $p=2$, as it strongly exploits the Hilbert structure of $\dot W^{1,2}$.
Later on, in \cite{ciafusmag07}, Cianchi, Fusco, Maggi, and Pratelli considered the case $1<p<n$ and provided a stability result in which the deficit controls the $L^{p^*}$ distance between $u$ and some $v\in \M$. Their proof uses a combination of symmetrization techniques and tools from the theory of mass transportation. 
More recently, in \cite{figmagpraa},  Figalli, Maggi, and Pratelli used rearrangement techniques and mass transportation theory to show that, in the case $p=1$,
the deficit controls the appropriate notion of distance of $u$ from $\M$ at the level of gradients (see also \cite{FMPSobolev, cianchi06} for partial results when $p=1$). As in \cite{BianchiEgnell91}, the distance considered in \cite{figmagpraa} is the strongest that one expects to control and the exponent of decay is sharp.

In view of \cite{BianchiEgnell91} and \cite{figmagpraa}, one may expect that, for all $1<p<n$, the deficit controls the $L^p$ distance between $\nabla u$ and $\nabla v$ for some $v \in \M$; this would answer the question of Breizis and Lieb in the affirmative with the deficit controlling the strongest possible notion of distance in this setting.
The main result of this paper shows that, in the case $p\geq 2$, this result is indeed true. More precisely, our main result states the following:

\begin{theorem}\label{MainThm} Let $2 \leq p<n$.
There exists a constant $C>0$, depending only on $p$ and $n$, such that for all $u \in \dot W^{1,p},$
\begin{equation} \label{fullstability}
 \| \na u - \na v\|_{L^p}^p \leq C\, \delta(u) +C \|u\|_{L^{p^*}}^{p-1} \|u-v\|_{L^{p^*}}
 \end{equation}
for some $v\in \M$.
\end{theorem}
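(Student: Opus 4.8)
The plan is to bypass concentration--compactness and spectral analysis entirely and instead extract \eqref{fullstability} directly from a sharp convexity estimate for $\xi\mapsto|\xi|^p$ that is available precisely because $p\geq 2$. Since $\delta$ and $\|\na\,\cdot\,\|_{L^p}^p$ are $p$-homogeneous and $\M$ is a cone, we may assume $\|u\|_{L^{p^*}}=1$ (the case $u\equiv 0$ being trivial with $v\equiv 0$), in which case the last term of \eqref{fullstability} reads simply $\|u-v\|_{L^{p^*}}$; the general statement then follows by applying the normalized one to $u/\|u\|_{L^{p^*}}$ and $v/\|u\|_{L^{p^*}}$ and multiplying back by $\|u\|_{L^{p^*}}^p$. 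We will in fact obtain \eqref{fullstability} for \emph{every} $v\in\M$ with $\|v\|_{L^{p^*}}=\|u\|_{L^{p^*}}$; for the application to true gradient stability one then takes $v$ to be the extremal furnished by the $L^{p^*}$-stability theorem of \cite{ciafusmag07}.

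The one nontrivial input is the pointwise inequality: for every $p\geq 2$ there is $c_p>0$ such that
\[
|a+b|^p\;\geq\;|a|^p+p\,|a|^{p-2}a\cdot b+c_p\,|b|^p\qquad\text{for all }a,b\in\R^n.
\]
One proves it by homogeneity (reduce to $|b|=1$) together with compactness: the function $a\mapsto|a+b|^p-|a|^p-p|a|^{p-2}a\cdot b$ is continuous, strictly positive for $b\neq 0$ by strict convexity of $\xi\mapsto|\xi|^p$, and a Taylor expansion gives $|a+b|^p-|a|^p-p|a|^{p-2}a\cdot b=\tfrac p2|a|^{p-2}\big(1+(p-2)(e_a\cdot b)^2\big)+O(|a|^{p-3})$ with $e_a:=a/|a|$, so for $p\geq 2$ it stays bounded away from $0$ uniformly in $a$ (tending to $1$ when $p=2$ and to $+\infty$ when $p>2$). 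This is exactly where $p\geq 2$ is used: for $1<p<2$ the displayed inequality fails -- one only has $|a+b|^p\geq|a|^p+p|a|^{p-2}a\cdot b+c_p(|a|+|b|)^{p-2}|b|^2$, which degenerates when $|a|\gg|b|$.

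With this in hand the proof is short. Fix $v\in\M$ with $\|v\|_{L^{p^*}}=1$, apply the pointwise inequality with $a=\na v(x)$, $b=\na u(x)-\na v(x)$, and integrate (every term is integrable because $\na u,\na v\in L^p$):
\[
\|\na u\|_{L^p}^p\;\geq\;\|\na v\|_{L^p}^p+p\int|\na v|^{p-2}\na v\cdot\na(u-v)+c_p\,\|\na u-\na v\|_{L^p}^p.
\]
Because $v$ is an extremal of unit $L^{p^*}$-norm, $\|\na v\|_{L^p}^p=\SSp$ and $-\DIV\big(|\na v|^{p-2}\na v\big)=\SSp\,|v|^{p^*-2}v$ weakly in $\dot W^{1,p}$; testing against $u-v\in\dot W^{1,p}$ (legitimate by density) turns the middle term into $p\,\SSp\int|v|^{p^*-2}v\,(u-v)$. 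Subtracting $\SSp=\SSp\|u\|_{L^{p^*}}^p$ gives
\[
\delta(u)\;\geq\;p\,\SSp\int|v|^{p^*-2}v\,(u-v)+c_p\,\|\na u-\na v\|_{L^p}^p,
\]
and since H\"older's inequality yields $\big|\int|v|^{p^*-2}v\,(u-v)\big|\leq\|v\|_{L^{p^*}}^{p^*-1}\|u-v\|_{L^{p^*}}=\|u-v\|_{L^{p^*}}$, rearranging produces $c_p\|\na u-\na v\|_{L^p}^p\leq\delta(u)+p\,\SSp\|u-v\|_{L^{p^*}}$, which is \eqref{fullstability} in the normalized case with $C=\max\{1,p\,\SSp\}/c_p$. (Convexity of $t\mapsto|t|^{p^*}$ in fact forces $\int|v|^{p^*-2}v\,(u-v)\leq 0$, so the extra term is genuinely a price one pays, but this observation is not needed.)

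Presented this way there is no single computational obstacle; the content is conceptual and lies in two places. First, recognizing the above convexity estimate as the right tool -- and that it breaks down for $p<2$ -- is the crux and the source of the hypothesis $p\geq 2$. Second, and this is where genuine care is required for the sharper claim made in the Introduction, to upgrade \eqref{fullstability} to a bound of $\delta(u)$ on $\|\na u-\na v\|_{L^p}$ alone one must run the argument with $v$ equal to the $L^{p^*}$-optimal extremal of \cite{ciafusmag07}, matching $L^{p^*}$-norms and respecting scaling invariance so that $\|u\|_{L^{p^*}}^{p-1}\|u-v\|_{L^{p^*}}$ is absorbed into a power of the deficit; tracking the resulting (non-sharp) exponent is then the only delicate bookkeeping.
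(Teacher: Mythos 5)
Your proposal is correct, and it takes a genuinely different and more elementary route than the paper's. The crux is the pointwise $p$-uniform convexity estimate $|a+b|^p\geq|a|^p+p|a|^{p-2}a\cdot b+c_p|b|^p$, valid precisely for $p\geq2$; after the substitution $a\mapsto b$, $b\mapsto a-b$ it becomes the Bregman-divergence bound $|a|^p-|b|^p-p|b|^{p-2}b\cdot(a-b)\geq c_p|a-b|^p$, equivalent to the classical monotonicity estimate $(|a|^{p-2}a-|b|^{p-2}b)\cdot(a-b)\gtrsim|a-b|^p$ (Alt--Luckhaus, Simon). Integrating, using $-\Delta_p v=\SSp|v|^{p^*-2}v$, and applying H\"older then gives \eqref{fullstability} directly, for \emph{every} $v\in\M$ with $\|v\|_{L^{p^*}}=\|u\|_{L^{p^*}}$ and with no concentration--compactness, spectral analysis, or regime interpolation; this is strictly stronger than ``for some $v$'' and also simplifies the passage to Corollary~\ref{MainCor}, since one can take $v$ to be the $L^{p^*}$-optimal extremal of \cite{ciafusmag07} outright and avoid the comparison argument at the end of Section~\ref{Conclude}. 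In the paper's terminology you are essentially running only the ``$p^{\rm th}$-order'' expansion that yields \eqref{boundP}, but with a pointwise input sharp enough (a clean $+c_p|y|^p$ in place of \eqref{num1}'s $-\C|x|^{p-2}|y|^2+\tfrac12|y|^p$) to eliminate the error term $-\C_3\,\dd(u,\M)^2$ that the paper must then cancel against the complementary bound \eqref{bound2}. What your argument does \emph{not} recover is the finer information the paper establishes en route: Corollary~\ref{Regime 1 and 3} shows that $\int|\na u-\na v|^p\leq C\,\delta(u)$ with no residual $L^{p^*}$ term whenever $\dd(u,\M)^2$ and $\int|\na u-\na v|^p$ are not comparable, a statement that genuinely uses the spectral gap of $\LL_v$ and is invisible to a purely pointwise estimate. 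So your approach buys a short, transparent proof of Theorem~\ref{MainThm} and Corollary~\ref{MainCor}; the paper's buys partial progress toward the conjectural sharp bound $\int|\na u-\na v|^p\lesssim\delta(u)$.
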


As a consequence of Theorem~\ref{MainThm} and the main result of \cite{ciafusmag07} (see Theorem~\ref{CFMP} below), we deduce the following
corollary, proving the desired stability at the level of gradients:
\begin{corollary}\label{MainCor} Let $2 \leq p<n$.
There exists a constant $C>0$, depending only on $p$ and $n$, such that for all $u \in \dot W^{1,p},$ \begin{equation} \label{fullstability2}
 \left(\frac{ \| \na u - \na v\|_{L^p}}{\|\na u\|_{L^{p}}}\right)^{\zeta}  \leq C\frac{\delta(u)}{ \| \na u\|_{L^p}^p}
 \end{equation}
 for some $v\in \M$, where $\zeta = p^*p\left(3 + 4p -\frac{3p+1}{n}\right)^2.$
\end{corollary}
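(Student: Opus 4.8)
The plan is to derive Corollary~\ref{MainCor} by combining Theorem~\ref{MainThm} with the $L^{p^*}$-stability result of Cianchi--Fusco--Maggi--Pratelli (Theorem~\ref{CFMP}). The starting point is the inequality \eqref{fullstability}: for a suitable $v\in\M$ we have
\begin{equation*}
\|\na u-\na v\|_{L^p}^p\leq C\,\delta(u)+C\|u\|_{L^{p^*}}^{p-1}\|u-v\|_{L^{p^*}}.
\end{equation*}
The first term is already of the desired form, so the whole point is to absorb the second, ``mixed'' term $\|u\|_{L^{p^*}}^{p-1}\|u-v\|_{L^{p^*}}$ into the deficit. Here is where Theorem~\ref{CFMP} enters: it controls a power of $\|u-v\|_{L^{p^*}}/\|u\|_{L^{p^*}}$ (for possibly a different extremal $\tilde v$, but after the standard observation that the distance to $\M$ in the two statements is comparable up to constants, one may take the same $v$) by $\delta(u)/\|\na u\|_{L^p}^p$. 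Quantitatively, if Theorem~\ref{CFMP} gives $(\|u-v\|_{L^{p^*}}/\|u\|_{L^{p^*}})^{\alpha}\leq C\,\delta(u)/\|\na u\|_{L^p}^p$ for some explicit exponent $\alpha=\alpha(n,p)$, then by scaling invariance (both sides of \eqref{fullstability2} and of \eqref{fullstability} are $0$-homogeneous under $u\mapsto tu$) we may normalize $\|u\|_{L^{p^*}}=1$, so that the mixed term becomes simply $C\|u-v\|_{L^{p^*}}\leq C(\delta(u)/\|\na u\|_{L^p}^p)^{1/\alpha}$.

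Next I would feed this back into \eqref{fullstability}. After normalization, and using $\|\na u\|_{L^p}^p\geq S_{p,n}^p$ together with $\delta(u)\leq \|\na u\|_{L^p}^p$, both remaining terms on the right-hand side are bounded by a constant times a positive power of $t:=\delta(u)/\|\na u\|_{L^p}^p\in[0,1]$; on $[0,1]$ the smallest such power dominates, so
\begin{equation*}
\|\na u-\na v\|_{L^p}^p\leq C\Big(\tfrac{\delta(u)}{\|\na u\|_{L^p}^p}\Big)^{\min\{1,1/\alpha\}}\,\|\na u\|_{L^p}^p.
\end{equation*}
Dividing by $\|\na u\|_{L^p}^p$ and raising both sides to the power $1/\min\{1,1/\alpha\}=\max\{1,\alpha\}$ (a monotone operation on nonnegative quantities) yields
\begin{equation*}
\Big(\tfrac{\|\na u-\na v\|_{L^p}}{\|\na u\|_{L^p}}\Big)^{p\max\{1,\alpha\}}\leq C\,\tfrac{\delta(u)}{\|\na u\|_{L^p}^p},
\end{equation*}
which is precisely \eqref{fullstability2} with $\zeta=p\max\{1,\alpha\}$. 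It then remains only to check that the exponent coming out of Theorem~\ref{CFMP} produces exactly the stated value $\zeta=p^*p\big(3+4p-\tfrac{3p+1}{n}\big)^2$; this is a bookkeeping computation tracing the exponents through \cite{ciafusmag07}.

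The only genuine subtlety — and the step I expect to require the most care — is the reconciliation of the two extremal functions: \eqref{fullstability} holds for some $v\in\M$, while Theorem~\ref{CFMP} provides control of the distance to possibly a different $v'\in\M$. One resolves this by noting that if both $\|\na u-\na v\|_{L^p}$ and $\|u-v'\|_{L^{p^*}}$ were large relative to the deficit we would get a contradiction, or more cleanly by using that a small deficit forces $u$ to be $L^{p^*}$-close to a \emph{unique} connected piece of $\M$ (modulo the obvious invariances), so that the nearly-optimal $v$ in the two results can be taken to coincide after adjusting constants; alternatively, when the deficit is not small one argues trivially since the left-hand side of \eqref{fullstability2} is bounded. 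Everything else is elementary: exponent juggling, the normalization $\|u\|_{L^{p^*}}=1$, and the comparison of powers of a quantity lying in $[0,1]$.
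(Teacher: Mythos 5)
Your overall strategy coincides with the paper's: start from Theorem~\ref{MainThm}, invoke the Cianchi--Fusco--Maggi--Pratelli bound to absorb the mixed term $\|u\|_{L^{p^*}}^{p-1}\|u-v\|_{L^{p^*}}$, reduce to the regime where \eqref{assumption2} holds (the other case being trivial), and then track exponents. So the skeleton is correct and matches the paper.

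However, two points you treat as routine are precisely where the paper does real work, and your proposed resolutions for the second do not quite close the gap. First, the deficit in \cite{ciafusmag07} is the \emph{linear} quantity $\|\na u\|_{L^p}-S_{p,n}\|u\|_{L^{p^*}}$, not the $p$-homogeneous $\delta(u)=\|\na u\|_{L^p}^p-S_{p,n}^p\|u\|_{L^{p^*}}^p$; you do not mention this mismatch, and it must be reconciled before the two results can be chained. The paper does this via the elementary inequality $a^p-b^p\geq a-b$ for $a\geq b\geq1$ applied to $a=\|\na u\|_{L^p}/(S_{p,n}\|u\|_{L^{p^*}})$, $b=1$, together with the assumption \eqref{assumption2}. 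Second, the reconciliation of the two extremals is not a ``standard observation.'' Theorem~\ref{MainThm} and Theorem~\ref{CFMP Theorem} each produce their own nearly-optimal $v$ --- the former by minimizing the weighted $H^1$-type distance \eqref{distance}, the latter by minimizing $\|u-v\|_{L^{p^*}}$ --- and one must show these two minimizers yield comparable $L^{p^*}$-distances, i.e.\ $\|u-\bar v\|_{L^{p^*}}\leq C\inf\{\|u-v\|_{L^{p^*}}:v\in\M,\ \|v\|_{L^{p^*}}=\|u\|_{L^{p^*}}\}$ where $\bar v$ attains the infimum in \eqref{distance}. Your proposed shortcuts --- ``if both were large relative to the deficit we would get a contradiction'' and ``$u$ is close to a unique connected piece of $\M$'' --- are not quantitative comparisons between the two specific minimizers and do not by themselves give the needed inequality. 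The paper proves this comparison by adapting the concentration-compactness / contradiction argument of Proposition~\ref{small delta}(2), which is a genuine step (involving rescaled sequences, weak limits, and a finite-dimensional norm-equivalence on $\bar\M=\{v-\bar v:v,\bar v\in\M\}$) rather than a triviality. Without that argument, your proof has a hole exactly at the point you yourself flag as the main subtlety.
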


The topic of stability for functional and geometric inequalities has generated much interest in recent years. In addition to the aforementioned papers, results of this type have been addressed for the isoperimetric inequality \cite{FMP08,FiMP10,CiLe12}, log-Sobolev inequality \cite{IndreiMarcon, Bobkovetal, fathi2014quantitative},
the higher order Sobolev inequality \cite{GazzolaWeth, BaWeWi}, the fractional Sobolev inequality \cite{ChenFrankWeth}, the Morrey-Sobolev inequality \cite{cianchi08} and the Gagliardo-Nirenberg-Sobolev inequality \cite{CarlenFigalli, Ruffini14}, as well as for numerous other geometric inequalities. Aside from their intrinsic interest, stability results have applications
 in the study of geometric problems (see \cite{figallimaggi11, FigalliMaggi13, CicaleseSparado13})
and can be used to obtain quantitative rates of convergence for diffusion equations
(as in \cite{CarlenFigalli}).

For the remainder of the paper, we will always assume that $2 \leq p <n.$
\\

\noindent{\it Acknowledgments:} A. Figalli is partially supported by NSF Grants DMS-1262411 and DMS-1361122.
R. Neumayer is supported by the NSF Graduate Research Fellowship under Grant DGE-1110007. Both authors warmly thank Francesco Maggi for useful discussions regarding this work.

\section{Theorem \ref{MainThm}: idea of the proof}
As a starting point to prove stability of \eqref{Sobolev} at the level of gradients, one would like to follow the argument used to prove the analogous result in \cite{BianchiEgnell91}. However, this approach turns out to be sufficient only in certain cases, and additional  ideas are needed to conclude the proof. Indeed, a Taylor expansion of the deficit $\delta(u)$ and a spectral gap for the linearized problem allow us to show that the second variation is strictly positive, but in general we cannot absorb the higher order terms. Let us provide a few more details to see to what extent this approach works, where it breaks down, and how we get around it.
\subsection{The expansion approach}

The first idea of the proof of Theorem~\ref{MainThm} is in the spirit of the stability result of Bianchi and Egnell in \cite{BianchiEgnell91}. Ultimately, this approach will need modification, but let us sketch how such an argument would go. 

In order to introduce a Hilbert space structure to our problem, 
we define a weighted $L^2$-type distance of a function $u \in \dot W^{1,p}$ to $\M$ at the level of gradients.
To this end, for each $v=cv_{\lambda,y}\in \M$, we define 
\begin{equation}\label{Matrix A}
A_{v} (x) :=  (p-2) |\na v|^{p-2} \hat{r} \otimes \hat{r} + |\na v|^{p-2} \text{Id},\qquad\hat{r} = \frac{x-y}{ |x -y|},
\end{equation}
 where $(a \otimes b)c := (a\cdot c)b$. Then, with the notation $A_v[a,a] := a^T A_va$ for $a \in \mathbb{R}^n$, we define the weighted $L^2$ distance of $u$ to $\M$ by
\begin{equation}\label{distance}\begin{split}
\dd(u, &\M) := \inf \left\{\Big( \int A_v[\na u -\na v, \na u - \na v] \Big)^{1/2} : \ v \in \M, \ \| v \|_{L^{p^*}} = \|u \|_{L^{p^*}} \right\}\\
&=\inf \left\{\Big( \int A_{cv_{\lambda,y}}[\na u-\na c v_{\lambda, y},\na u-\na c v_{\lambda, y}]
 \Big)^{1/2}  :  \ \lambda \in \R_+,\  y \in \Rn, \ c =\|u \|_{L^{p^*}}  \right\}.
 \end{split}
\end{equation}
Note that
$$ \int A_v[\na u -\na v, \na u - \na v] = 
 \int|\na v|^{p-2} |\na u - \na v|^2
+(p-2) \int|\na v|^{p-2} |\pa_r u - \pa_r v|^2.$$
A few remarks about this definition are in order.
\begin{remark}
\label{rmk:dist}{\rm{The motivation to define $\dd(u,\M)$ in this way instead of, for instance, 
$$\inf \left\{\Big( \int|\na v|^{p-2} |\na u - \na v|^2
 \Big)^{1/2} : \ v \in \M, \ \| v \|_{L^{p^*}} = \|u \|_{L^{p^*}} \right\},$$ will become apparent in Section~\ref{Preliminaries}. 
 This choice, however, is only technical, as
 $$ \int|\na v|^{p-2} |\na u - \na v|^2 \leq \int A_v[\na u  - \na v, \na u - \na v] \leq (p-1)  \int|\na v|^{p-2} |\na u - \na v|^2.$$
 }}
 \end{remark}
\begin{remark}{\rm{One could alternatively define the distance in \eqref{distance} without the constraint $c =\|u \|_{L^{p^*}},$ instead also taking the infimum over the parameter $c$. Up to adding a small positivity constraint to ensure that the infimum is not attained at $v=0$, this definition works, but ultimately the current presentation is more straightforward.}}
\end{remark}
\begin{remark}{\rm{The distance $\dd(u,\M)$ has homogeneity $p/2$, that is, 
$\dd(cu, \M) = c^{p/2} \dd(u,\M).$}}
\end{remark}
 
 In Proposition~\ref{small delta}(1), we show that there exists $\delta_0=\delta_0(n,p)>0$ such that if 
\begin{equation}\label{assumption2} \delta(u) \leq \delta_0 \| \na u \|_{L^p}^p,
\end{equation}
then the infimum in $\dd(u,\M)$ is attained.
% In other words, there exists $v\in \M$ with $\int |v|^{p^*} = \int |u|^{p^*}$ such that $\dd(u,\M)^2 = \int A_v[\na u - \na v, \na u - \na v]$.
Given a function $u\in \dot W^{1,p}$ satisfying \eqref{assumption2}, let $v\in \M$ attain the infimum in \eqref{distance} and define
$$
\varphi:=\frac{u-v}{\|\na (u-v)\|_{L^p}},
$$
so that  $u = v + \e \vphi$ with $\e=\|\na (u-v)\|_{L^p}$ and $\int |\na \vphi |^p  = 1$.
Since $\delta \geq 0$ and $\delta (v) = 0$, the 
Taylor expansion of the deficit of $u$ around $v$ vanishes both at the zeroth and first order. Thus, the expansion leaves us with 
\begin{equation}\label{star}\delta(u) =\e^2 p \int A_v[\na \vphi, \na \vphi]  - \e^2 \SSp p(p^*-1) \int |v|^{p^*-2} |\vphi|^2   +o(\e^2).
\end{equation}

Since $v \in M$ minimizes the distance between $u$ and $\M$,  $\e\vphi = u-v $ is orthogonal (in some appropriate sense) to the tangent space of $\M$ at $v$,
which we shall see coincides with the span the first two eigenspaces of an appropriate weighted linearized $p$-Laplacian. Then, a gap in the spectrum in this operator allows us to show that 
$$ c\,\dd(u,\M)^2=c\,\e^2 \int A_v[\na \vphi, \na \vphi]  \leq\e^2 p \int A_v[\na \vphi, \na \vphi]   - \e^2\SSp p(p^*-1) \int |v|^{p^*-2} |\vphi|^2   $$
for a positive constant $c=c(n,p)$. Together with \eqref{star}, this implies
\begin{align*} \dd(u, \M)^2  + o(\e^2) \leq C\delta(u).\end{align*}
Now, if the term $o(\e^2)$ could be absorbed into $\dd(u, \M)^2$, then we could use the estimate \eqref{boundP} below to obtain
$$\int |\na u - \na v|^p \leq C\delta(u),$$
which would conclude the proof.
\subsection{Where this approach falls short} \ The problem arises exactly when trying to absorb the term $o(\e^2)$. 
Indeed, recalling that  $\e=\|\na (u-v)\|_{L^p}$, we are asking whether
$$
o(\|\na u-\na v\|_{L^p}^2) \ll \dd(u, \M)^2\approx \int |\na v|^{p-2} |\na u-\na v|^2
$$
(recall Remark \ref{rmk:dist}),
and unfortunately this is false in general.
Notice that this problem never arises in \cite{BianchiEgnell91} for the case $p=2$, as the above inequality reduces to
$$
o(\|\na u-\na v\|_{L^2}^2) \ll \|\na u-\na v\|_{L^2}^2,
$$
which is clearly true.

\subsection{The solution} A Taylor expansion of the deficit will not suffice to prove Theorem~\ref{MainThm} as we cannot hope to absorb the higher order terms. Instead, for a function $u\in \dot W^{1,p}$, we give two different expansions, each of which gives a lower bound on the deficit, by splitting the terms between the second order term and the $p^{\rm{th}}$ order term using elementary inequalities (Lemma~\ref{numbers}). Pairing this with an analysis of the second variation, we obtain the following:
\begin{proposition}\label{Bounds on the deficit} There exist constants 
 ${\bf{c}}_1, {\bf{C}}_2, $ and ${\bf{C}}_3$, depending only on $p$ and $n$, such that the following holds.
Let $u\in \dot W^{1,p}$ be a function satisfying \eqref{assumption2} and let $v\in \M$ be a function where the infimum of the distance \eqref{distance} is attained. Then
\begin{align}
\label{bound2}  {\bf{c}}_1\, \dd(u, \M)^2 -{\bf{C}}_2 \, \int |\na u - \na v  |^p &\leq \delta(u) ,\\
\label{boundP}-{\bf{C}}_3\, \dd(u, \M)^2+ \frac{1}{4} \int |\na u - \na v|^p & \leq \delta(u) .
\end{align}
\end{proposition}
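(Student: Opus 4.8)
The plan is to expand the deficit $\delta(u)$ around the extremal $v$ and carefully separate the contribution of the gradient term from that of the $L^{p^*}$ term, keeping track of \emph{exact} error estimates rather than $o(\e^2)$-type asymptotics. Writing $u = v + w$ with $w = u - v$, I would treat the two pieces $\|\na u\|_{L^p}^p = \|\na v + \na w\|_{L^p}^p$ and $\SSp \|u\|_{L^{p^*}}^p = \SSp\|v + w\|_{L^{p^*}}^p$ separately. For the gradient term, the elementary inequalities collected in Lemma~\ref{numbers} should give, for vectors $a, b \in \R^n$, a lower bound of the form $|a+b|^p \geq |a|^p + p|a|^{p-2} a\cdot b + \cc_1 |a|^{p-2}|b|^2$ valid when $|b| \lesssim |a|$, together with a companion bound $|a+b|^p \geq |a|^p + p|a|^{p-2}a\cdot b + \tfrac14 |b|^p - \C |a|^{p-2}|b|^2$ that trades the quadratic gain for a $p$-th power gain plus a controlled quadratic error; applying these pointwise with $a = \na v$, $b = \na w$ and integrating yields the two competing gradient expansions. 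The linear term $p\int |\na v|^{p-2}\na v \cdot \na w$ is then rewritten via the Euler--Lagrange equation for $v$ (namely $-\DIV(|\na v|^{p-2}\na v) = \SSp v^{p^*-1}$) as $p\SSp \int v^{p^*-1} w$.

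Next I would expand the $L^{p^*}$ term. Since $p \geq 2$ forces $p^* > 2$, a second-order Taylor expansion of $t \mapsto |v + t w|^{p^*}$ (with integral remainder) gives $|v+w|^{p^*} = |v|^{p^*} + p^* |v|^{p^*-2} v\, w + \tfrac{p^*(p^*-1)}{2} |v|^{p^*-2} w^2 + R$, and then raising to the power $p/p^*$ and expanding once more produces $\|u\|_{L^{p^*}}^p = \|v\|_{L^{p^*}}^p + p\int v^{p^*-1} w + p(p^*-1)\int |v|^{p^*-2} w^2 - (\text{a correction from the }\|\cdot\|_{L^{p^*}}\text{-normalization}) + (\text{error})$; here one uses that $\|v\|_{L^{p^*}} = \|u\|_{L^{p^*}}$, which is precisely the constraint built into the distance $\dd(u,\M)$ and which kills the troublesome first-order-in-$\|w\|_{L^{p^*}}$ term. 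The error terms in both expansions must be bounded by $\int |\na u - \na v|^p$ up to a constant; this is where Sobolev's inequality \eqref{Sobolev} (to pass from $L^{p^*}$ norms of $w$ to $L^p$ norms of $\na w$) and the smallness hypothesis \eqref{assumption2} (to guarantee, via Proposition~\ref{small delta}, that $\|\na w\|_{L^p}$ is small relative to $\|\na u\|_{L^p}$, hence the regime $|\na w| \lesssim |\na v|$ dominates in an integrated sense) both enter.

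Combining these expansions, the deficit becomes, up to errors controlled by $\int|\na u - \na v|^p$,
\begin{equation*}
\delta(u) \gtrsim p\int A_v[\na w, \na w] - p\,\SSp(p^*-1)\int |v|^{p^*-2} w^2
\end{equation*}
(in the first expansion) or the same quadratic form plus $\tfrac14 \int |\na w|^p$ minus a quadratic error (in the second). At this point I would invoke the spectral gap analysis alluded to before Proposition~\ref{Bounds on the deficit}: because $v$ realizes the infimum in \eqref{distance}, the function $w = u - v$ is orthogonal to the tangent space $T_v\M$, which coincides with the span of the first two eigenspaces of the relevant weighted linearized $p$-Laplacian, so the coercivity estimate $\int A_v[\na w, \na w] - \SSp(p^*-1)\int |v|^{p^*-2}w^2 \geq c\,\dd(u,\M)^2$ holds for a dimensional constant $c > 0$. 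Feeding this into the two expansions gives \eqref{bound2} and \eqref{boundP} respectively, with the constant $\tfrac14$ in \eqref{boundP} coming directly from the elementary inequality used.

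The main obstacle I anticipate is not the spectral gap (which is an eigenvalue computation to be done separately) but rather organizing the two Taylor expansions so that \emph{every} remainder is genuinely absorbed into $\int |\na u - \na v|^p$ with the right sign, uniformly over the family $\M$. The delicate point is that $|\na v|^{p-2}$ degenerates where $\na v = 0$ and the pointwise inequalities of Lemma~\ref{numbers} have a restricted range of validity ($|b| \lesssim |a|$); handling the complementary region $|\na w| \gtrsim |\na v|$ requires a separate cruder estimate and then checking that its total contribution is still $\lesssim \int |\na u - \na v|^p$. Getting the bookkeeping of these regions right, and ensuring the constants $\cc_1, \C_2, \C_3$ depend only on $n$ and $p$, is the technical heart of the argument.
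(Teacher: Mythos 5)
Your high-level plan matches the paper's: one expansion that keeps the quadratic form and feeds it into a spectral gap (for \eqref{bound2}), a parallel expansion that keeps the $p$-th power of $\na w$ (for \eqref{boundP}), with the linear terms cancelled via the Euler--Lagrange equation \eqref{pLap} and concavity of $z\mapsto |z|^{p/p^*}$ handling the passage from $\int|u|^{p^*}$ to $\|u\|_{L^{p^*}}^p$. But there are two real gaps.

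The more serious one is your claim that ``because $v$ realizes the infimum in \eqref{distance}, the function $w=u-v$ is orthogonal to the tangent space $T_v\M$.'' For $p>2$ this is false, and the paper goes out of its way to emphasize it (Section~\ref{sect:orthogonal} and the remark at its end). The minimization is over the $\lambda$- and $y$-dependent quadratic form $\int A_{cv_{\lambda,y}}[\cdot,\cdot]$, and since $A_v$ itself depends on $(\lambda,y)$, the first-order conditions \eqref{constraints1} carry extra terms coming from $\partial_\lambda A_v$ and $\partial_{y^i}A_v$. After converting via the linearized equation \eqref{Linearize} to the $L^2(v^{p^*-2})$ inner product that the spectral theory lives in, one gets only \emph{almost}-orthogonality: $\langle \vphi, \partial_\lambda v\rangle$ and $\langle \vphi, \partial_{y^i}v\rangle$ are $O(\e)$, not $0$ (see \eqref{lambda constraint}, \eqref{y constraint}), and the same for $\langle\vphi,v\rangle$ from the volume constraint \eqref{volume 2}. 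Exact orthogonality only happens at $p=2$, where the right-hand sides vanish. The spectral gap estimate therefore cannot be invoked directly; one must decompose $\vphi$ along the eigenbasis, estimate the low-mode coefficients $\beta_1,\beta_2$, and show their contribution is $O(\e^p)$ so it can be absorbed. That is precisely the estimate \eqref{Lambda3Thing} and its proof (the bounds on $\beta_1^2$ and $\beta_{2,j}^2$), which is the technical core of \eqref{bound2} and is entirely missing from your proposal.

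The secondary issue is the form of the elementary inequalities. You propose a quadratic lower bound valid only when $|b|\lesssim|a|$ and a separate crude estimate on the complementary region, and you argue the small-$\e$ regime of Proposition~\ref{small delta} lets the region $|\na w|\lesssim|\na v|$ ``dominate in an integrated sense.'' But this is exactly the false assertion that breaks the naive Bianchi--Egnell-style expansion for $p>2$ (as spelled out in Section~2.2 of the paper): smallness of $\|\na w\|_{L^p}$ does not make $o(\|\na w\|_{L^p}^2)$ small compared to $\int|\na v|^{p-2}|\na w|^2$, because that weighted $L^2$ quantity can be carried entirely by the region where $|\na w|\gg|\na v|$. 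Lemma~\ref{numbers} sidesteps the whole issue: inequalities \eqref{num2}, \eqref{num1} hold for \emph{all} $x,y$ at the price of a $-\C|y|^p$ (resp.\ $-\C|x|^{p-2}|y|^2$) error, so no region-splitting is needed and the error term is already in the exact form ($\int|\na w|^p$, resp.\ $\dd(u,\M)^2$) that the proposition is allowed to subtract off. If you insist on the restricted-range inequality plus a crude tail estimate you would still have to show that the tail contribution is $\leq \C\int|\na w|^p$ with the correct sign and uniformly in $v\in\M$; this is doable but amounts to re-deriving \eqref{num2}--\eqref{num1} the hard way.

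Finally, the anticipated obstacle you name --- degeneracy of $|\na v|^{p-2}$ where $\na v=0$ --- is not actually where the difficulty lies; it is a single point and the weight is locally integrable. The real obstructions are the two above.
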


Individually, both inequalities are quite weak. However, as shown in Corollary~\ref{Regime 1 and 3}, they allow us to prove Theorem~\ref{MainThm} (in fact, the stronger statement $\int|\na u - \na v|^p \leq \delta(u)$) for the set of functions $u$ such that 
\begin{equation}\label{regimes} \begin{split}
\dd(u, \M)^2&= \int A_v[\na u - \na v, \na u - \na v]
 \ll \int |\na u -\na v|^p\\
 & \text{ or } \\
\dd(u, \M)^2& = \int A_v[\na u - \na v, \na u - \na v] \gg\int |\na u - \na v|^p.
\end{split}
\end{equation}
 We are then left to consider the middle regime, where 
 \[
 \int A_v[\na u - \na v, \na u - \na v] \approx \int |\na u - \na v|^p.
 \]
We handle this case as follows.
Let  $u_t:=(1-t)u+tv$ be the linear interpolation between $u$ and $v$. Choosing $t_*$ small enough, $u_{t_*}$ falls in the second regime in \eqref{regimes}, so Theorem~\ref{MainThm} holds for $u_{t_*}$. We then must relate the deficit and distance of $u_{t_*}$ to those of $u$. 
 While relating the distances is straightforward, it is not clear for the deficits whether the estimate $\delta(u_{t_*}) \leq C \delta(u)$ holds.
Still, we can show that  
 $$
 \delta(u_{t_*}) \leq C\delta(u) +C\|v\|_{L^{p^*}}^{p-1} \| u-v\|_{L^{p^*}},
 $$
 which allows us to conclude the proof.
It is this point in the proof that introduces that term $\| u-v\|_{L^{p^*}}$ in Theorem~\ref{MainThm}, and for this reason we rely on the main theorem of \cite{ciafusmag07} to prove Corollary~\ref{MainCor}. We note that the application of \cite{ciafusmag07} is not straightforward,
since the function $v$ which attains the minimum in our setting is a priori different from the one considered there (see Section \ref{Conclude} for more details).
 
\subsection{Outline of the paper}
The paper is structured as follows. \\

\noindent In Section~\ref{Preliminaries}, we introduce the operator $\LL_v$ that will be important in our analysis of the second variation of the deficit and prove some facts about  the spectrum of this operator. We also prove some elementary but crucial inequalities in Lemma~\ref{numbers} and provide orthogonality constraints that arise from taking the infimum in \eqref{distance}.
\\

\noindent In Section~\ref{expansion}, we prove Proposition~\ref{Bounds on the deficit} by exploiting a gap in the spectrum of $\LL_v$ and using the inequalities of Lemma~\ref{numbers}.  \\

\noindent  
In Section~\ref{Conclude}, we combine Proposition~\ref{Bounds on the deficit} with an interpolation argument to obtain Theorem~\ref{MainThm}. We then apply the main result of \cite{ciafusmag07} 
in order to prove Corollary~\ref{MainCor}.  \\

\noindent In Section~\ref{Spectral}, we prove the compact embedding that shows that $\LL_v$ has a discrete spectrum and justify the use of Sturm-Liouville theory in the proof of Proposition~\ref{lem: spectral properties}. \\

\noindent Section~\ref{appendix} is an appendix in which we prove a technical claim.

\section{Preliminaries}\label{Preliminaries}
In this section, we state a few necessary facts and tools.
\subsection{The tangent space of $\M$ and the operator $\LL_v$}
The set $\M$ of extremal functions defined in \eqref{M} is an $(n+2)$-dimensional smooth manifold except at $0\in \M.$ For a nonzero $v=\veee\in \M$, the tangent space is computed to be
$$T_{v}\M  =\spann \{ v,\,  \partial_\lambda v,  \, \partial_{y^1}v,\hdots, \,\partial_{y^n}v \},$$
 where $y^i$ denotes the $i$th component of $y$ and $ \partial_\lambda v = \partial_\lambda|_{\lambda = \lambda_0} v$, $ \partial_{y^i} v = \partial_{y^i}|_{y^i= y_0^i} v$.
 
 Since the functions $v=\vone$ minimize $u \mapsto \delta(u)$ and have $\|\vone\|_{L^{p^*}}=1$, by computing the Euler-Lagrange equation one discovers that
\begin{equation}\label{pLap}- \Delta_p v  = \SSp v^{p^*-1},
\end{equation}
where the $p$-Laplacian $\Delta_p$ is defined by  $\Delta_p w: = \DIVV(|\na w|^{p-2} \na w)$. Hence, differentiating \eqref{pLap} with respect to $y^i$ or $\lambda$,
we see that
\begin{equation}\label{Linearize}
 - \DIVV (A_{v}(x) \na w)= (p^*-1) \SSp v^{p^*-2} w,\qquad w \in \spann \{ \partial_\lambda v,  \, \partial_{y^1}v,\hdots, \,\partial_{y^n}v \},
\end{equation}
where $A_v(x)$ is as defined in \eqref{Matrix A}. This motivates us to consider the weighted operator 
\begin{equation}\label{op1}\LL_{v} w := - \DIVV (A_{v}(x) \na w) v^{2-p^*}
\end{equation}
 on the space $L^2(v^{p^*-2})$, where, for a measurable weight $\om :\Rn \to \R$, we let 
 \[
 \|w\|_{L^2( \om)} = \Big( \int_{\Rn} |w|^2 \om  \Big)^{1/2},  \qquad L^2(\om) = \{ w:\R^n\to \R : \|w\|_{L^2(\om)}<\infty\}.
 \]
\begin{proposition}\label{lem: spectral properties}
The operator $\LL_v$ has a discrete spectrum $\{\alpha_i\}_{i=1}^{\infty}$, with $0<\alpha_i < \alpha_{i+1}$ for all $i$, and
\begin{align}
\alpha_1 = (p-1)\SSp, & \qquad H_1 = \spann\{ v\}, \label{lambda1}\\
\alpha_2 =(p^*-1) \SSp,& \qquad H_2  = \spann\{ \partial_{\lambda }v,\,  \partial_{y^1} v, \hdots, \, \partial_{y^n} v\},\label{lambda2}
\end{align}
where $H_i$ denotes the eigenspace corresponding to $\alpha_i$. 
\end{proposition}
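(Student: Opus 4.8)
The plan is to reduce the spectral analysis of $\LL_v$ to an ODE eigenvalue problem by exploiting the radial symmetry of the extremal $v = \veee$, and then to invoke Sturm--Liouville theory on the resulting family of radial operators. First I would observe that it suffices to treat the case $v = c_0 v_1$ (so $\lambda_0 = 1$, $y_0 = 0$), since $\LL_{cv_{\lambda,y}}$ is conjugate to $\LL_{c_0 v_1}$ via the scaling/translation symmetry under which $\M$ is invariant; the eigenvalues are unchanged and the eigenspaces transform accordingly. With $v$ radial, the matrix $A_v(x)$ from \eqref{Matrix A} is, in polar coordinates, diagonal with respect to the radial/tangential splitting, with radial weight $(p-1)|v'(r)|^{p-2}$ and tangential weight $|v'(r)|^{p-2}$. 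Thus $\LL_v$ commutes with the action of $O(n)$, and I would decompose $L^2(v^{p^*-2})$ into the orthogonal sum $\bigoplus_{k\geq 0} \mathcal{H}_k$, where $\mathcal{H}_k$ consists of functions of the form $\psi(r) Y_k(\theta)$ with $Y_k$ a spherical harmonic of degree $k$ (eigenvalue $k(n+k-2)$ for $-\Delta_{S^{n-1}}$). On each such sector $\LL_v$ becomes a one-dimensional weighted Sturm--Liouville operator $L_k$ acting on $\psi(r)$, with an explicit singular weight coming from $|v'(r)|^{p-2}$ and the Jacobian $r^{n-1}$.

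The next step is to identify the lowest eigenvalue in each sector. In the $k=0$ sector, the Euler--Lagrange equation \eqref{pLap} exactly says that $v$ itself satisfies $\LL_v v = (p-1)\SSp v$: indeed $-\DIVV(A_v \na v) = -(p-1)\Delta_p v = (p-1)\SSp v^{p^*-1}$, so multiplying by $v^{2-p^*}$ gives $(p-1)\SSp v$. Since $v > 0$, it is the ground state of $L_0$, hence $\alpha_1 = (p-1)\SSp$ with one-dimensional eigenspace $H_1 = \spann\{v\}$; this is exactly \eqref{lambda1}. In the $k=1$ sector, the functions $\pa_{y^i} v$ are, up to the radial profile, linear combinations of the degree-one spherical harmonics $x^i/|x|$, and \eqref{Linearize} shows precisely that $\LL_v(\pa_{y^i}v) = (p^*-1)\SSp\, \pa_{y^i}v$; positivity of the radial profile of $\pa_{y^i}v$ (which I would check from \eqref{v1}) again identifies $(p^*-1)\SSp$ as the \emph{bottom} of the spectrum of $L_1$, with the $n$-dimensional eigenspace spanned by the $\pa_{y^i}v$. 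Likewise $\pa_\lambda v$ lies in the $k=0$ sector; one computes from \eqref{Linearize} that $\LL_v(\pa_\lambda v) = (p^*-1)\SSp\,\pa_\lambda v$, so $(p^*-1)\SSp$ is the \emph{second} eigenvalue of $L_0$. Since $p^* - 1 > p-1$ for $p < n$, combining these gives that the two smallest eigenvalues of $\LL_v$ overall are $\alpha_1 = (p-1)\SSp$ and $\alpha_2 = (p^*-1)\SSp$, with the stated eigenspaces, \emph{provided} one rules out any eigenvalue of $L_k$ with $k\geq 2$ falling at or below $(p^*-1)\SSp$, and rules out a third eigenvalue of $L_0$ or a second eigenvalue of $L_1$ in that range.

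This last point is the main obstacle, and it is exactly where Sturm--Liouville theory enters (as flagged in Section~\ref{Spectral} of the paper). The key structural fact is that the bottom eigenvalue $\mu_k$ of $L_k$ is \emph{strictly increasing} in $k$: increasing $k$ adds a strictly positive term $k(n+k-2)\,|v'(r)|^{p-2} r^{n-3}$ to the quadratic form, so $\mu_0 \leq \mu_1 < \mu_2 < \mu_3 < \cdots$, and a similar monotonicity/interlacing in the Sturm--Liouville ordering controls the higher eigenvalues within each fixed sector via oscillation (nodal) counting — the $j$-th eigenfunction of $L_k$ has exactly $j$ interior zeros. Since the $k=1$ ground state is $(p^*-1)\SSp$ (its radial profile having no interior zero) and the $k=0$ second eigenvalue is also $(p^*-1)\SSp$ (its eigenfunction $\pa_\lambda v$ having exactly one interior zero, which I would verify from the explicit form of $v_1$), these nodal facts together with strict monotonicity in $k$ force every other eigenvalue of $\LL_v$ to be strictly larger than $(p^*-1)\SSp$, and every eigenvalue to be strictly positive (the form is positive definite since $A_v \geq 0$ and $v > 0$). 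The remaining technical burden — which I would defer to the later sections as the paper does — is: (i) the compact embedding of the relevant weighted Sobolev space into $L^2(v^{p^*-2})$, guaranteeing discreteness of each $L_k$; and (ii) checking that the singular weights at $r=0$ and $r=\infty$ fall within the regime where the classical Sturm--Liouville oscillation theory applies, i.e. that the endpoints are of limit-point/limit-circle type with the correct boundary behavior. Granting (i)–(ii), assembling the sectors yields Proposition~\ref{lem: spectral properties}.
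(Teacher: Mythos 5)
Your proposal is correct and follows essentially the same approach as the paper: reduce by scaling to $v=v_1$, separate variables into spherical-harmonic sectors, identify the ground states and $\partial_\lambda v$, $\partial_{y^i}v$ via \eqref{pLap} and \eqref{Linearize}, and then invoke (singular) Sturm--Liouville nodal theory with the compact embedding deferred to Section~\ref{Spectral}. You are in fact slightly more explicit than the paper in ruling out low eigenvalues from the $k\geq 2$ sectors by noting the strict monotonicity in $k$ of the sector quadratic forms, a point the paper passes over tacitly.
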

In particular, Proposition~\ref{lem: spectral properties} implies that
\begin{equation}\label{eqn: tangent space}
T_v\M =   \spann \{ H_1 \cup H_2\}.
\end{equation}
The Rayleigh quotient characterization of eigenvalues implies that
\begin{equation}
\alpha_3 =\inf \bigg\{\, \frac{ \langle \LL_v w, w\rangle}{\langle w, w\rangle}\, =\, \frac{ \int A_v[\na w, \na w]  }{ \int v^{p^*-2} w^2  } :  \ \ w \perp \spann \{ H_1 \cup H_2\} \bigg\}, \label{lambda3}
\end{equation}
where orthogonality is with respect to the inner product defined by
\begin{equation}\label{ip} \langle w_1, w_2 \rangle: = \int v^{p^*-2} \,w_1\,w_2  .\end{equation}
Note that the eigenvalues of $\LL_v$ are invariant under changes in $\lambda$ and $y$.

 \begin{proof}[Proof of Proposition~\ref{lem: spectral properties}]
 The discrete spectrum of $\LL_v$ follows in the usual way after establishing the right compact embedding theorem; we show the compact embedding in Corollary~\ref{Compact embedding} and give details confirming the discrete spectrum in Corollary~\ref{discrete}. Since a scaling argument shows that the eigenvalues of $\LL_{v}$ are invariant under changes of $\lambda$ and $y$, it suffices to consider the operator $\LL = \LL_v$ for $v= v_{0,1}$, letting $A = A_{v}$. 

%In the direction of showing \eqref{lambda1} and \eqref{lambda2}, 
One easily verifies that $v$ is an eigenfunction of $\LL$ with eigenvalue $(p-1) \SSp $ and that $\partial_{\lambda} v $ and $\partial_{y^i}v$ are eigenfunctions with eigenvalue $(p^*-1)\SSp$, using \eqref{pLap} and \eqref{Linearize} repectively. 
Furthermore, since $v>0 $, it follows that $\alpha_1 = (p-1)\SSp$ is the \textit{first} eigenvalue, which is simple, so \eqref{lambda1} holds.

To prove \eqref{lambda2}, we must show that $\alpha_2 = (p^*-1)\SSp$ is the \textit{second} eigenvalue and verify that there are no other eigenfunctions in $H_2$. Both of these facts follow from separation of variables and Sturm-Liouville theory.
Indeed, an eigenfunction $\vphi$ of $\LL$ satisfies
\begin{equation}
\label{eval} \DIVV (A(x)\na \vphi) + \alpha  v^{p^*-2}\vphi=0.
\end{equation}
%with $A_v$ as defined in \eqref{Matrix A}.
 Assume that $\vphi$ takes the form $\varphi (x) = Y(\theta)f(r)$, where $Y:\mathbb{S}^{n-1}\to \mathbb{R} $ and $f:\mathbb{R}\to \mathbb{R}$. 
In polar coordinates,
\begin{equation}\label{polarA}
\begin{split}
\text{div} (A(x) \na \varphi) &=(p-1) |\na v|^{p-2} \partial_{rr} \varphi 
+  \frac{(p-1)(n-1)}{r} |\na v|^{p-2}\partial_r \varphi \\ 
&+\frac{1}{r^2}|\na v|^{p-2}\sum_{j=1}^{n-1} \partial_{\theta_j \theta_j } \varphi
+
(p-1)(p-2)|\na v|^{p-4} \partial_r v\, \partial_{rr} v\, \partial_r \varphi
\end{split}
\end{equation}
(this computation is given in the appendix for the convenience of the reader). As $v$ is radially symmetric, that is, $v(x) = w(|x|)$, we introduce the slight abuse of notation by letting $v(r)$ also denote the radial component: $v(r) = w(r)$, so $v'(r) = \pa_{r} v$ and $v''(r) = \pa_{rr} v.$ From \eqref{polarA}, we see that \eqref{eval} takes form 
\begin{align*}
0&=
(p-1) |v'|^{p-2} f''(r)Y(\theta) + 
 \frac{(p-1)(n-1)}{r} |v'|^{p-2} f'(r)Y(\theta)
 \\ &
  +\frac{1}{r^2}|v'|^{p-2} f(r)\Delta_{\mathbb{S}^{n-1}}Y(\theta)
+(p-1)(p-2)|v'|^{p-4} v' v''  f'(r)Y(\theta) +\alpha v^{p^*-2}f(r)Y(\theta),
\end{align*}
which yields the system
\begin{align}\label{sphere}
&0=\Delta_{\mathbb{S}^{n-1}}Y(\theta) + \mu Y(\theta)&\qquad \text{ on } \mathbb{S}^{n-1},\\
\label{Req}
\begin{split}
0=(p-1) |v'|^{p-2} f'' + \frac{(p-1)(n-1)}{r} |v'|^{p-2} f'  -\frac{\mu}{r^2}|v'|^{p-2} f \\
+(p-1)(p-2)|v'|^{p-4} v'v''  f' +\alpha v^{p^*-2}f \end{split}
  & \qquad \text{ on } [0,\infty).
\end{align}
The eigenvalues and eigenfunctions of \eqref{sphere} are explicitly known; these are the spherical harmonics. The first two eigenvalues are $\mu_1 =0$ and $\mu_2=n-1$.\\

Taking $\mu = \mu_1= 0$ in \eqref{Req}, we claim that:\\
- $\alpha_1^1= (p-1)\SSp$ and the corresponding eigenspace is $\spann\{v\}$;\\
- $\alpha_1^2 = (p^*-1)\SSp$ with the corresponding eigenspace $\spann\{\partial_{\lambda} v\}$. 

Indeed, Sturm-Liouville theory ensures that each eigenspace is one-dimensional, and that the $i$th eigenfunction has $i-1$ interior zeros.
Hence, since $v$ (resp. $\partial_{\lambda} v$) 
solves \eqref{Req} with $\mu=0$ and $\alpha= (p-1)\SSp$ (resp. $\alpha= (p^*-1)\SSp$), having  no zeros (resp. one zero) it must be the first (resp. second) eigenfunction.\\

For $\mu_2 = n-1$, the eigenspace for \eqref{sphere} is $n$ dimensional with $n$ eigenfunctions giving the spherical components of $\partial_{y^i} v, i = 1,\hdots, n$. The corresponding equation in \eqref{Req} gives $\alpha_2^1 = (p^*-1)\SSp$. As the first eigenvalue of \eqref{Req} with $\mu = \mu_2$, $\alpha_2^1$ is simple.\\

The eigenvalues are strictly increasing, so this shows that $\alpha_1^3 >(p^*-1)\SSp$ and $\alpha_2^2 >(p^*-1)\SSp$, concluding the proof.
\end{proof}
The application of Sturm-Liouville theory in the proof above is not immediately justified because ours is a {\it singular} Sturm-Liouville problem. The proof of Sturm-Liouville theory in our setting, that is, that each eigenspace is one-dimensional and that the $i$th eigenfunction has $i-1$ interior zeros, is shown in Section~\ref{Spectral}.

\subsection{Some useful inequalities}
The following lemma contains four elementary inequalities for vectors and numbers. This lemma is a key tool for getting around the issues presented in the introduction; in lieu of a Taylor expansion, these inequalities yield bounds on the deficit by splitting the higher order terms between the second order terms and the $p^{\rm{th}}$ or $p^{*\rm{th}}$ order terms. 

\begin{lemma}\label{numbers}
Let $x,y\in \mathbb{R}^n$ and  $a,b\in \R$.  The following inequalities hold.\\

\noindent For all $\ka >0,$ there exists a constant $\C = \C(p,n,\kappa)$ such that 
\begin{equation}\label{num2} |x+y|^p \geq |x|^p + p |x|^{p-2}x \cdot y  +(1-\ka)\Bigl( \frac{p}{2}  |x|^{p-2}|y|^{2}+
\frac{p(p-2)}{2}|x|^{p-4} (x\cdot y)^2\Bigr)  - \C|y|^p.
\end{equation}

\noindent For all $\kappa>0$, there exists $\C = \C(p, \kappa)$ such that
\begin{equation}\label{num4}
|a+b|^{p^*} \leq |a|^{p^*} + p^* |a|^{p^* -2 } ab + \Big(\frac{p^* ( p^* -1)}{2} + \ka \Big) |a|^{p^*-2} |b|^2 + \C |b|^{p^*}.
\end{equation}

\noindent There exists $\C = \C(p,n)$ such that 
\begin{equation}\label{num1} |x+y|^p \geq |x|^p + p |x|^{p-2}x \cdot y - \C|x|^{p-2} |y|^2 + \frac{|y|^p}{2}.
\end{equation}

\noindent There exists $\C = \C(p)$ such that 
\begin{equation}\label{num3}
|a+b|^{p^*} \leq |a|^{p^*}+ p^*|a|^{p^* -2}ab + \C |a|^{p^*-2} |b|^2 + 2 |b|^{p^*}.
\end{equation}

\end{lemma}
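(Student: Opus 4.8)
The plan is to prove the four inequalities of Lemma~\ref{numbers} by reducing each to a one-variable (or one-parameter) estimate via homogeneity, and then analyzing the behavior near the degenerate point and at infinity separately. All four inequalities are homogeneous, so after dividing by $|x|^p$ (resp. $|a|^{p^*}$) — treating the trivial case $x=0$ (resp. $a=0$) by hand — it suffices to set $|x|=1$ and write $y = s\,e + z$ with $e$ a unit vector, or more simply to parametrize $y$ by its length $t=|y|$ and the angle it makes with $x$. For \eqref{num2} and \eqref{num1} (the lower bounds with $p\geq 2$), I would first recall the exact second-order Taylor expansion of $F(y):=|x+y|^p$ at $y=0$, namely $F(y) = |x|^p + p|x|^{p-2}x\cdot y + \frac{p}{2}|x|^{p-2}|y|^2 + \frac{p(p-2)}{2}|x|^{p-4}(x\cdot y)^2 + R(x,y)$, where $R$ is the remainder; the content of both inequalities is a quantitative bound on $R$.

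For the small-$|y|$ regime, Taylor's theorem with integral remainder gives $|R(x,y)| \leq C(p)\,|x|^{p-2}|y|^2 \cdot o(1)$ as $|y|/|x|\to 0$ when $p\geq 2$ (the third derivative of $|\cdot|^p$ being controlled), which immediately yields \eqref{num1} near the origin since the $-\C|x|^{p-2}|y|^2$ term dominates, and yields \eqref{num2} near the origin since $|R| \leq \ka(\frac{p}{2}|x|^{p-2}|y|^2 + \frac{p(p-2)}{2}|x|^{p-4}(x\cdot y)^2)$ once $|y|/|x|$ is small enough (using that $|x|^{p-2}|y|^2$ controls the Hessian term). For the large-$|y|$ regime, i.e. $|y|\geq \eta |x|$ for fixed $\eta>0$, homogeneity reduces matters to a compact set in the variable $x/|y|$, on which one checks that the function $|x+y|^p - |x|^p - p|x|^{p-2}x\cdot y - (\text{Hessian term})$ divided by $|y|^p$ is bounded below, so choosing $\C$ large (depending on $\eta$, hence on $\ka$ for \eqref{num2}) absorbs everything. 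The two intermediate scales are glued by a standard covering/continuity argument on the projectivized domain.

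The upper bounds \eqref{num4} and \eqref{num3} for $|a+b|^{p^*}$ are genuinely harder because $p^* = np/(n-p)$ can be less than $2$ (precisely when $p < 2n/(n+1)$... but here $p\geq 2$ forces $p^* > 2$, in fact $p^*>p\geq 2$, so $|\cdot|^{p^*}$ is $C^2$ and convex — good). With $p^*\geq 2$ the same strategy applies: exact Taylor expansion $|a+b|^{p^*} = |a|^{p^*} + p^*|a|^{p^*-2}ab + \frac{p^*(p^*-1)}{2}|a|^{p^*-2}b^2 + R^*(a,b)$, bound $|R^*|$ near $b=0$ by $\ka|a|^{p^*-2}b^2$, and for $|b|\gtrsim|a|$ use that $|a+b|^{p^*} \leq 2^{p^*-1}(|a|^{p^*}+|b|^{p^*})$ plus crude bounds on the lower-order terms to absorb into $\C|b|^{p^*}$; the constant $\frac{p^*(p^*-1)}{2}+\ka$ in \eqref{num4} is exactly the Hessian coefficient plus slack, while \eqref{num3} is the cruder version with a fixed constant $\C$ in front of $|a|^{p^*-2}|b|^2$ and an explicit $2$ in front of $|b|^{p^*}$ — the latter forcing me to track the $2^{p^*-1}$ constant carefully, or better, to note $|a+b|^{p^*} \leq (|a|+|b|)^{p^*}$ and expand.

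I expect the main obstacle to be the bookkeeping in \eqref{num3}, where the coefficient $2$ in front of $|b|^{p^*}$ is explicit and must come out of the analysis cleanly rather than as "some constant $\C$"; this likely requires the sharp inequality $(1+t)^{p^*} \leq 1 + p^* t + \C t^2 + 2 t^{p^*}$ for all $t\geq 0$, proved by splitting at some threshold $t_0$ and using convexity of $(1+t)^{p^*} - 2t^{p^*}$ properties — a slightly delicate elementary optimization. Everything else is the standard "Taylor near the diagonal, homogeneity-compactness away from it" pattern, and none of it depends on the specific structure of the Sobolev problem, only on $2\leq p<n$ (which guarantees $p^*>2$ and hence $C^2$ regularity of both power functions).
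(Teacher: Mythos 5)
Your strategy — homogeneity to reduce to the ratio $|y|/|x|$ (or $b/a$), Taylor expansion for small ratio, and a compactness/crude bound for bounded-away-from-zero ratio — is essentially the paper's proof, which packages the same idea as a proof by contradiction so that the divergence $C_j\to\infty$ automatically forces $|y_j|/|x_j|\to 0$ and the large-ratio regime never needs a separate argument. One side remark in your sketch is off: bounding $|a+b|^{p^*}\le(|a|+|b|)^{p^*}$ and then expanding cannot yield \eqref{num3} or \eqref{num4} when $a$ and $b$ have opposite signs, since the target's first-order term $p^*|a|^{p^*-2}ab$ is then negative while $(|a|+|b|)^{p^*}$ expands with a positive first-order coefficient; you must work with $|a+b|^{p^*}$ directly, which your main Taylor-plus-compactness argument does correctly.
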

 
\begin{proof}[Proof of Lemma~\ref{numbers}] We only give the proof of \eqref{num2}, as the proofs of \eqref{num4}-\eqref{num3} are analogous. 
Observe that if $p$ is an even integer or $p^*$ is an integer, these inequalities follow (with explicit constants) from a binomial expansion and splitting the intermediate terms between the second order and $p^{\rm{th}}$ or $p^{*\rm{th}}$ order terms using Young's inequality. 

Suppose \eqref{num2} fails. Then there exists $\ka>0$, $\{C_j\}\subset \R$ such that $C_j \to \infty$, and $\{x_j\}$, $\{y_j\} \subset \R^n$ such that
$$ |x_j + y_j |^p - |x_j|^p < p\, |x_j|^{p-2} x_j \cdot y_j
 +(1-\ka)\left(\frac{p }{2} |x_j|^{p-2}|y_j|^2 +
 \frac{p(p-2)}{2} |x_j|^{p-4}(x_j\cdot y_j)^2 \right)- C_j |y_j|^p.$$
If $x_j=0,$ we immediately get a contradiction. Otherwise, we divide by $|x_j|^p$ to obtain
\begin{equation}\label{number contradiction} \frac{|x_j + y_j|^p}{|x_j|^p} - 1 < p\, \frac{x_j\cdot y_j}{|x_j|^2} + 
(1-\ka)\frac{p}{2}\left( \frac{|y_j|^2}{|x_j|^2} + (p-2) \frac{(x_j\cdot y_j)^2}{|x_j|^4}\right) - C_j \frac{|y_j|^p}{|x_j|^p}.\end{equation}
The left-hand side is bounded below by $-1$, so in order for \eqref{number contradiction} to hold, $|y_j|/|x_j|$ must converge to $0$ at a sufficiently fast rate. In this case, $|y_j|$ is much smaller that $|x_j|$, so
 a Taylor expansion reveals that the left-hand side behaves like 
 \begin{equation}\label{taylor number} p\, \frac{x_j\cdot y_j}{|x_j|^2} + \frac{p}{2} \frac{|y_j|^2}{|x_j|^2 } + \frac{p(p-2)}{2}\frac{(x_j\cdot y_j)^2}{|x_j|^4}+ o\Big(\frac{|y_j|^2}{|x_j|^2 }\Big),\end{equation}
which is larger than the right-hand side, contradicting \eqref{number contradiction}.
\end{proof}
With the same proof, one can show \eqref{num4} with the opposite sign: 
For all $\kappa>0$, there exists $\C = \C(p, \kappa)$ such that
\[
|a+b|^{p^*} \geq |a|^{p^*} + p^* |a|^{p^* -2 } ab - \Big(\frac{p^* ( p^* -1)}{2} + \ka \Big) |a|^{p^*-2} |b|^2 - \C |b|^{p^*}.
\]
Therefore, applying this and  \eqref{num4} to functions $v$ and $v + \vphi $ with $\int |v|^{p^*}  = \int |v+ \vphi|^{p^*}$, one obtains
\begin{equation}\label{eqn: bound for orthog constraints}
\left| \int |v|^{p^*-2} v \vphi \right| \leq \Big(\frac{p^* ( p^* -1)}{2} + \ka \Big)\int |v|^{p^*-2} |\vphi|^2 + \C\int |\vphi|^{p^*}.
\end{equation}
\subsection{Orthogonality constraints for $u-v$}
\label{sect:orthogonal}

Given a function $u\in \dot W^{1,p}$ satisfying \eqref{assumption2},
suppose that $v= c_0v_{\lambda_0, y_0} $ is a function at which the infimum is attained in \eqref{distance}. Then
\begin{equation}\label{volume constraint} \int |u|^{p^*}   = \int |v|^{p^*}   = c_0^{p^*},\end{equation}
and the energy
\begin{equation}\label{e1}
E(v) = E(\lambda, y) = \int A_{ c_0v_{\lambda,y}}[\na u- c_0\na  v_{\lambda, y},
\na u- c_0\na  v_{\lambda, y}] ,
 \end{equation}
 arising from \eqref{distance} when $u$ is fixed,
has a critical point at $(\lambda_0, y_0)$ in the $n+1$ parameters $ \lambda$ and $y^i$, $i = 1, \hdots, n$. In other words,
\begin{equation}\label{constraints} \begin{split}
0 &=  \partial_{\lambda}|_{\lambda=\lambda_0} \int A_{ c_0v_{\lambda,y}}[\na u- c_0\na  v_{\lambda, y},
\na u- c_0\na  v_{\lambda, y}]  ,
\\
0 &=  \partial_{y^i}|_{{y^i=y_0^i}}\int A_{ c_0v_{\lambda,y}}[\na u- c_0\na  v_{\lambda, y},
\na u- c_0\na  v_{\lambda, y}] .
\end{split}
\end{equation}

We express
 $u$ as $u = v + \e \vphi$, with $\vphi$ scaled such that $\int |\na \vphi|^p = 1.$ 
Computing the derivatives in \eqref{constraints} gives
\begin{equation}\begin{split}
  \label{constraints1}\e \int A_v [\na \partial_{\lambda} v, \na \varphi ]
&= \frac{\e^2(p-2) }{2} \int |\na \varphi|^2|\na v|^{p-4}\na v \cdot \na \partial_{\lambda} v  
+ \frac{\e^2(p-2)^2}{2} \int |\na \varphi|^2|\na v|^{p-4}\pa_r v\,  \partial_{r \lambda} v   , \\
\e \int A_v[\na \partial_{y^i} v,  \na \varphi ]
&=\frac{\e^2(p-2) }{2}\int |\na \varphi|^2|\na v|^{p-4}\na v\cdot \na \partial_{y^i} v 
+\frac{\e^2(p-2)^2 }{2} \int |\na \varphi|^2|\na v|^{p-4}\pa_r v \, \partial_{r y^i} v\\&
 + \e^2 (p-2)\int |\na v|^{p-2} \pa_r  \vphi \na \vphi \cdot \pa_{y^i} \hat{r},
\end{split}
\end{equation}
where $\hat r$ is as in \eqref{Matrix A}.
Furthermore, multiplying \eqref{Linearize} by $\e \vphi$ and integrating by parts implies \begin{align*}
\SSp(p^* -1)\e  \int |v|^{p^*-2} \pl v\, \vphi & =\e \int A_v [\na \partial_{\lambda} v, \na \varphi ]
 , \\ 
 \SSp (p^* -1) \e   \int |v|^{p^*-2} \pa_{y^i}v\, \vphi & =\e \int A_v[\na \partial_{y^i} v ,\na \varphi ]
, \end{align*}
  so  \eqref{constraints1} becomes
\begin{align}
\label{lambda constraint}
\e \int |v|^{p^*-2} \pl v \,\vphi   =& \e^2 C_1\biggl[ \int |\na \varphi|^2|\na v|^{p-4}\na v \cdot \na \partial_{\lambda} v + (p-2) \int |\na \varphi|^2|\na v|^{p-4}\pa_r v \, \partial_{r\lambda} v \biggr],\\
\label{y constraint}
\e \int |v|^{p^*-2} \pa_{y^i} v\, \vphi = &\e^2  C_1\biggl[\int |\na \varphi|^2|\na v|^{p-4}\na v\cdot \na \partial_{y^i} v +
(p-2)\int |\na \varphi|^2|\na v|^{p-4}\pa_r v  \,\partial_{ry^i} v \\
&+2 \int |\na v|^{p-2} \pa_r  \vphi \na \vphi \cdot \pa_{y^i} \hat{r} \biggr] , \nonumber
\end{align}
where $C_1=\frac{(p-2)}{2(p^* -1)\SSp}$.

A Taylor expansion of the constraint \eqref{volume constraint} implies
\[
 - \e \int |v|^{p^*-2} v \vphi   = \e^2\int |v|^{p^*-2} |\vphi|^2  + o(\e^2).
 \]
However, in view of the comments in the introduction, we cannot generally absorb the term $o(\e^2)$, so this is not quite the form of the orthogonality constraint that we need. In its place, using \eqref{volume constraint} and \eqref{eqn: bound for orthog constraints}, we have
\begin{equation} \label{volume 2} 
\left|\e \int |v|^{p^*-2} v \vphi \right|  \leq  \e^2\, \frac{p^*-1 + \ka}{2} \int |v|^{p^*-2} |\vphi|^2   + \C \e^{p^*} \int |\vphi|^{p^*}
\end{equation}
for any $\ka >0$, with \C = $\C(p, n, \ka)$.

The conditions \eqref{lambda constraint}, \eqref{y constraint}, and \eqref{volume 2} show that $\vphi$ is ``almost orthogonal" to $T_{v}\M$ with respect to the inner product given in \eqref{ip}.
 Indeed, dividing through by $\e$, the inner product of $\vphi$ with each basis element of $T_{v}M$ appears on the left-hand side of \eqref{lambda constraint}, \eqref{y constraint}, and \eqref{volume 2}, while the right-hand side is $O(\e)$.  As a result of \eqref{eqn: tangent space} and $\vphi$ being ``almost orthogonal" to $T_{v}\M$,
it is shown that $\vphi$ satisfies 
a Poincar\'{e}-type inequality \eqref{Lambda3Thing}, which is an essential point in the proof of Proposition~\ref{Bounds on the deficit}.

 \begin{remark}{\rm{In \cite{BianchiEgnell91}, the analogous constraints give orthogonality rather than almost orthogonality; this is easily seen here, as taking $p=2$ makes the right-hand sides of \eqref{lambda constraint} and \eqref{y constraint} vanish.}}
\end{remark}

%%%%%%%%%%%%%%%%%%%%%%%%%%%%%%%%%%%%%%%%%%%%%%%%%%%%%%%%%%%%%%%%%%%%%%%%

\section{Proof of Proposition~\ref{Bounds on the deficit} and its consequences}\label{expansion}

%%%%%%%%%%%%%%%%%%%%%%%%%%%%%%%%%%%%%%%%%%%%%%%%%%%%%%%%%%%%%%%%%%%%%%%%
We prove Proposition~\ref{Bounds on the deficit} combining an analysis of the second variation and the inequalities of Lemma~\ref{numbers}. As a consequence (Corollary~\ref{Regime 1 and 3}), we show that,
up to removing the assumption \eqref{assumption2}, Theorem~\ref{MainThm} holds for the two regimes described in \eqref{regimes}.

To prove Proposition~\ref{Bounds on the deficit}, we will need two facts.
First, we want to know that the infimum in \eqref{distance} is attained, so that we
can express $u$ as $u =v + \e \vphi$ where $\int  |\na \vphi|^p   = 1$, and $\vphi$ satisfies  \eqref{lambda constraint}, \eqref{y constraint}, and \eqref{volume 2}.
Second, it will be important to know that if $\delta_0$ in \eqref{assumption2} is small enough, then $\e$ is small as well. For this reason we first prove the following:

\begin{proposition}\label{small delta} The following two claims hold.
\begin{enumerate}
\item There exists $\delta_0=\delta_0(n,p)>0$ such that if
\begin{equation}\label{assume delta small} \delta( u ) \leq \delta_0 \| \na  u\|_{L^p}^p,\end{equation}
then the infimum in \eqref{distance} is attained. In other words, there exists some $v\in \M$ with $\int |v|^{p^*} = \int |u|^{p^*}$ such that 
$$ \int A_v[\na u - \na v, \na u - \na v] = \dd(u, \M)^2.$$
\item For all $\e_0>0$, there exists $\delta_0=\delta_0(n,p,\e_0)>0$ such that if $u\in \dot W^{1,p}$ satisfies \eqref{assume delta small}, then 
 $$\e := \| \na u - \na v\|_{L^p} < \e_0$$
where 
 $v\in \M $ is a function that attains the infimum in \eqref{distance}.
\end{enumerate}
\end{proposition}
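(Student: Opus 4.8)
The plan is to prove both claims together by a compactness argument. Throughout I use that $\delta$, $\dd(\cdot,\M)$, the $L^{p^*}$ norm, and the quantities $\|\na u-\na v\|_{L^p}$ (for $v\in\M$) are invariant under the rescalings $u(x)\mapsto\lambda^{n/p^*}u(\lambda(x-y))$ (which also permute $\M$) and under multiplication by a constant; hence I may assume $\|u\|_{L^{p^*}}=1$ and am free to compose $u$ with such a rescaling. The first ingredient is a qualitative closeness statement: if $u_j\in\dot W^{1,p}$ satisfy $\|u_j\|_{L^{p^*}}=1$ and $\delta(u_j)\to0$, then, after replacing each $u_j$ by a suitable rescaling of itself, a subsequence of $(u_j)$ converges to $v_1$ strongly in $\dot W^{1,p}$. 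This follows from the compactness of minimizing sequences for \eqref{Sobolev} (concentration--compactness, or Theorem~\ref{CFMP} together with the uniform convexity of $L^p$, $1<p<\infty$): since $\|\na u_j\|_{L^p}^p=\delta(u_j)+\SSp\to\SSp$, one finds rescalings along which $u_j\to v_1$ in $L^{p^*}$; the rescaled gradients then converge weakly to $\na v_1$ in $L^p$ with convergence of norms, hence strongly.

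The second ingredient is the behavior of the energy at infinity. For $u$ with $\|u\|_{L^{p^*}}=1$ put $E_u(\lambda,y):=\int A_{v_{\lambda,y}}[\na u-\na v_{\lambda,y},\na u-\na v_{\lambda,y}]$, so that $\dd(u,\M)^2=\inf_{(\lambda,y)\in\R_+\times\Rn}E_u(\lambda,y)$. Since $v_{\lambda,y}$ is radial about $y$ one has $A_{v_{\lambda,y}}[\na v_{\lambda,y},\na v_{\lambda,y}]=(p-1)|\na v_{\lambda,y}|^p$ and $\int|\na v_{\lambda,y}|^p=\SSp$, so expanding the quadratic form gives
\begin{equation*}
E_u(\lambda,y)=(p-1)\SSp+\int|\na v_{\lambda,y}|^{p-2}\bigl(|\na u|^2-2\,\na u\cdot\na v_{\lambda,y}\bigr)+(p-2)\int|\na v_{\lambda,y}|^{p-2}\bigl((\pa_r u)^2-2\,\pa_r u\,\pa_r v_{\lambda,y}\bigr),
\end{equation*}
with $\pa_r=\hat r\cdot\na$, $\hat r=(x-y)/|x-y|$. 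I claim that if $(\lambda_k,y_k)$ leaves every compact subset of $\R_+\times\Rn$ then the last two integrals tend to $0$, so $E_u(\lambda_k,y_k)\to(p-1)\SSp$, and that this is uniform along any family $u=u_j$ for which $\na u_j$ converges in $L^p$. The reason is that $v_{\lambda_k,y_k}\rightharpoonup0$ in $\dot W^{1,p}$ and $|\na v_{\lambda_k,y_k}|^{p-2}\rightharpoonup0$ in $L^{p/(p-2)}$ as $(\lambda_k,y_k)$ escapes, while $|\na u|^2\in L^{p/2}$, $|\na u|\in L^p$ and $|\pa_r u|\le|\na u|$; the terms involving $u$ therefore vanish, the weight (of bounded mass in $L^{p/(p-2)}$, resp.\ $L^{p/(p-1)}$) escaping to infinity or concentrating away from the mass of $|\na u|^p$, and the equi-integrability of $\{|\na u_j|^p\}$ for a convergent family making this uniform. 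I also record that $E_u(\cdot,\cdot)$ is continuous on $\R_+\times\Rn$ (dominated convergence), and that $E_{v_1}(\lambda,y)=0$ only at $(\lambda,y)=(1,0)$, since $A_{v_{\lambda,y}}\ge|\na v_{\lambda,y}|^{p-2}\,\text{Id}$ with $|\na v_{\lambda,y}|>0$ a.e.

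To conclude, suppose the proposition failed. Then there would be $\e_0>0$ and $u_j$ with $\|u_j\|_{L^{p^*}}=1$ and $\delta(u_j)\to0$ such that for each $j$ either the infimum in $\dd(u_j,\M)$ is not attained, or it is attained only at functions $v\in\M$ with $\|\na u_j-\na v\|_{L^p}\ge\e_0$. By the first ingredient (and the invariances) I may assume $u_j\to v_1$ in $\dot W^{1,p}$. By the second, $E_{u_j}\to E_{v_1}$ uniformly on compact subsets of $\R_+\times\Rn$, and there are a compact $\mathcal K\subset\R_+\times\Rn$ containing $(1,0)$ and an index $j_0$ with $E_{u_j}(\lambda,y)\ge\tfrac{3}{4}(p-1)\SSp$ for all $j\ge j_0$ and $(\lambda,y)\notin\mathcal K$; since $\dd(u_j,\M)^2\le E_{u_j}(1,0)\to0$, for $j$ large $\dd(u_j,\M)^2=\inf_{\mathcal K}E_{u_j}$, attained at some $(\lambda_j,y_j)\in\mathcal K$ by continuity, which rules out the first alternative. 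Passing to a subsequence with $(\lambda_j,y_j)\to(\lambda_\infty,y_\infty)\in\R_+\times\Rn$, the second ingredient gives $E_{v_1}(\lambda_\infty,y_\infty)=\lim_jE_{u_j}(\lambda_j,y_j)=\lim_j\dd(u_j,\M)^2=0$, so $(\lambda_\infty,y_\infty)=(1,0)$; since $(\lambda,y)\mapsto v_{\lambda,y}$ is continuous into $\dot W^{1,p}$, choosing $v_j:=v_{\lambda_j,y_j}$ yields $\|\na u_j-\na v_j\|_{L^p}\le\|\na u_j-\na v_1\|_{L^p}+\|\na v_1-\na v_{\lambda_j,y_j}\|_{L^p}\to0$, contradicting the second alternative. (This produces a $\delta_0$ for (1) independent of $\e_0$, and then a possibly smaller $\delta_0=\delta_0(\e_0)$ for (2).)

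The main obstacle is the uniform vanishing of the weighted cross terms in the second ingredient: one must control the escape modes $\lambda_k\to0$, $\lambda_k\to\infty$, $|y_k|\to\infty$ (and their combinations) at once, and simultaneously over a strongly $L^p$-convergent family of gradients; this is precisely where the weak convergence $v_{\lambda_k,y_k}\rightharpoonup0$ and the equi-integrability of $\{|\na u_j|^p\}$ are used. Everything else in the argument is soft.
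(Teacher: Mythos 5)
Your argument is correct and takes a genuinely different route from the paper. You treat both parts at once by a single contradiction/compactness argument: after the concentration--compactness normalization $u_j\to v_1$ in $\dot W^{1,p}$, you exploit that the parametrized energy $E_{u_j}(\lambda,y)$ converges to $E_{v_1}(\lambda,y)$ uniformly in $(\lambda,y)$ (this follows from the bilinearity identity $E_{u_j}-E_{v_1}=\int A_{v_{\lambda,y}}[\na u_j-\na v_1,\na u_j+\na v_1-2\na v_{\lambda,y}]$ and H\"older with the $L^{p/(p-2)}$-weight $|\na v_{\lambda,y}|^{p-2}$, whose norm is independent of $(\lambda,y)$), combined with coercivity of $E_{v_1}$ at the boundary of the parameter domain. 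This forces both attainment of the infimum and convergence of the minimizing parameters to $(1,0)$. The paper instead treats the two claims separately: for (1) it compares with the unweighted $L^p$-minimizer to show $\dd(u,\M)$ is small and then shows a minimizing sequence for $\dd$ cannot escape, while for (2) it compares the $\dd$-minimizer with the $L^p$-minimizer via a more delicate contradiction argument involving the functions $\phi_j,\bar\phi_j,\psi_j$ and the quadratic-form minimality of $v_j$. Your argument is arguably more streamlined; what the paper's version of (2) additionally yields is a quantitative comparability of the two minimizers, but this is not needed for the statement of the proposition.

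One small imprecision: the intermediate claim that $\int|\na v_{\lambda_k,y_k}|^{p-2}|\na u|^2\to0$ as $(\lambda_k,y_k)$ escapes relies on $|\na v_{\lambda_k,y_k}|^{p-2}\rightharpoonup 0$ in $L^{p/(p-2)}$, which fails at $p=2$ (the weight is then identically $1$, so this term equals $\|\na u\|_{L^2}^2$ for all $k$). Since the term is nonnegative this does not affect the lower bound for $E_u$ outside a compact set --- for $p=2$ the limiting value is in fact $\|\na u\|_{L^2}^2+\SSp$, which is larger than $(p-1)\SSp$ --- so your conclusion is unharmed, but the argument should state the lower bound rather than an exact limiting value, or handle $p=2$ separately.
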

\begin{proof}
We begin by showing the following fact, which will be used in the proofs of both parts of the proposition: for all $\g>0$, there exists $\delta_0=\delta_0(n,p,\g)>0$ such that if $\delta(u) \leq \delta_0 \| \na u\|_{L^p}^p,$ then 
\begin{equation}\label{assumption3}\inf \{\|\na u - \na v\|_{L^p}: v \in \M\}\leq \g \|\na u \|_{L^p}.\end{equation}
Otherwise, for some $\g>0$, there exists a sequence $\{u_k\} \subset \dot W^{1,p}$ such that $\| \na u_k \|_{L^p} = 1$ and $\delta(u_k) \to 0$
 while $$\inf \{\| \na u_k -\na v\|_{L^p}: v \in \M\}> \g.$$
%$ \underset{c,y,\lambda}{\inf}  \int_{\Rn} |\na u |^2 |\na c v_{\lambda,y }|^p-2 dx  = 1$
 A concentration compactness argument as in \cite{lions1985,struwe1984global} ensures that there exist sequences $\{\lambda_k\}$ and $\{y_k\}$ such that, up to a subsequence, 
 % $u_i {\rightharpoonup} v$ in $\dot W^{1,p}(\Rn)$  with $\|\na v \|_{L^2(\Rn)}=1$, and thus
 $\lambda_k^{n/p^*}u_k(\lambda_k (x-y_k)) $ converges strongly in $\dot W^{1,p}$ to some $\bar v\in \M$. 
 Since
 $$
 \gamma<\left\| \na u_k - \na\Bigl[\lambda_k^{-n/p^*} \bar v\Bigl(\frac{\cdot}{\lambda_k}+y_k\Bigr)\Bigr]\right\|_{L^p}=\left\| \na \left[\lambda_k^{n/p^*}u_k(\lambda_k (\cdot -y_k))\right] - \na \bar v\right\|_{L^p}\to 0
 $$
this gives a contradiction  for $k$ sufficiently large, hence \eqref{assumption3} holds.\\
 
\noindent{\it{Proof of (1).}} Suppose $u$ satisfies \eqref{assume delta small}, with $\delta_0$ to be determined in the proof. Up to multiplication by a constant, we may assume that $\| u\|_{L^{p^*}} =1.$ By the claim above, we may take $\delta_0$ small enough so that \eqref{assumption3} holds for $\g$ as small as needed. 

The infimum on the left-hand side of \eqref{assumption3}
is attained. Indeed,
let $\{v_k\}$ be a minimizing sequence with $v_k = c_k v_{\lambda_k, y_k}$. 
%Then $\liminf c_k$ is bounded below by some $c>1/2$, otherwise \eqref{assumption3} is violated.
The sequences $\{c_k\}$, $\{\lambda_k\}$, $\{1/\lambda_k\}$, and $\{y_k\}$ are bounded: if $\lambda_k \to \infty$ or $\lambda_k\to 0$, then for $k$ large enough there will be little cancellation in the term $ |\na u - \na v_k|^{p}$, so that 
$$\int |\na u -  \na v_k|^{p}   \geq \frac{1}{2} \int |\na u|^p,$$
contradicting \eqref{assumption3}.
The analogous argument holds if $|y_k| \to \infty$ or $|c_k| \to \infty$.
Thus $\{c_k\}$, $\{\lambda_k\}$, $\{1/\lambda_k\}$, and $\{y_k\}$ are bounded and so, up to a subsequence,
   $(c_k, \lambda_k , y_k) \to (c_0 ,\lambda_0, y_0)$ for some $(c_0, \lambda_0, y_0) \in \R\times \R^+\times \R^n$. Since the functions $cv_{\lambda,y}$
   are smooth, decay nicely, and depend smoothly on the parameters, we deduce that $v_k \to c_0v_{\lambda_0, y_0}= \tilde{v}$ in $\dot W^{1,p}$
   (actually, they also converge in $C^k$ for any $k$), hence $\tilde{v}$ attains the infimum.

To show that the infimum is attained in \eqref{distance}, we
 obtain an upper bound on the distance by using $\bar v = \tilde{v}/\|  \tilde{v}\|_{L^{p^*}}$ as a competitor. Indeed, 
 recalling Remark \ref{rmk:dist}, it follows from H\"older's inequality that
 \begin{equation*}
\dd(u, \M)^2 \leq (p-1) \int \left| \na \bar v\right|^{p-2 } \left| \na u - \na \bar v\right|^2 \leq (p-1)S^{(p-2)/p}_{p,n} \| \na u -\na  \bar v\|_{L^p}^{2/p}.
\end{equation*}
Notice that, since $\|u\|_{L^{p^*}}=1$, it follows by \eqref{assume delta small} that $\|\na u\|_{L^p} \leq 2S_{p,n}^p$
provided $\delta_0 \leq 1/2$.
Hence, 
since
$$
\bigl| \|\bar v\|_{L^{p^*}}-1\bigl| \leq \|\bar v - u\|_{L^{p^*}} \leq S_{n,p}^{-p}\|\na \bar v -\na u\|_{L^p},
$$
it follows by  \eqref{assumption3} and the triangle inequality that 
 $\| \na u- \na \bar v\|_{L^p}  \leq C(n,p)\,\g$, therefore 
 \begin{equation}\label{distance bound}
 \dd(u, \M)^2 \leq C(n,p)\,\gamma^{2/p}.
 \end{equation}
Hence, if $\{v_k\}$ is a minimizing sequence for \eqref{distance} with $v_k = v_{\lambda_k, y_k}$ (so that $\int |v_k|^{p^*} = \int |u|^{p^*}=1$), the analogous argument
as above shows that if either of the sequences $\{\lambda_k\}$,  $\{1/\lambda_k\}$, or $\{y_k\}$ are unbounded, then 
$$\dd(u, \M)^2 \geq \frac{1}{2},$$
 contradicting \eqref{distance bound} for $\g$ sufficiently small. This implies that
$v_k \to v_{\lambda_0, y_0}$ in $\dot W^{1,p}$, and by continuity $v_{\lambda_0, y_0}$ attains the infimum in \eqref{distance}.\\

\noindent{\it{Proof of (2).}} We have shown that \eqref{assumption3} holds for $\delta_0$ sufficiently small. Therefore, we need only to show that, up to further decreasing $\delta_0$, there exists $C=C(p,n)$ such that 
$$ \|\na u - \na v_0 \|_{L^p} \leq C \inf \{ \| \na u - \na v\|_{L^p}: v\in \M\},$$
where $v_0\in \M$ is the function where the infimum is attained in \eqref{distance}.

Suppose for the sake of contradiction that there exists a sequence $\{u_j\}$ such that $\delta(u_j) \to 0$ and $\|\na u_j\|_{L^p}=1$ but\begin{equation}\label{contradiction} \int|\na u_j - \na v_j|^{p} \geq j \int |\na u_j -   \na \bar v_j|^{p}, \end{equation}
where $v_j, \bar v_j\in \M$ are such that
$$
 \int A_{v_j}[  \na u_j - \na v_j ,\na u_j - \na v_j]
   = \dd(u_j, \M)^2
   $$
   and 
$$ \int|\na u_j-  \na \bar v_j |^p = \inf\Big\{  \int|\na u_j - \na v_j|^p \, :\,  v \in \M \Big\}.
$$
Since $\delta(u_j) \to 0$, the same concentration compactness argument as above implies that there exist sequences $\{ \lambda_j\} $ and $\{y_j\}$ such that, up to a subsequence, 
$\lambda_j^{n/p^*}u_j(\lambda_j (x-y_j)) $ converges in $\dot W^{1,p}$ to some  $v \in \M$ with $\|\na v\|_{L^p} = 1$. By an argument analogous to that in part $(1)$, we determine that  $v_j \to v$ in $C^k$ and $\bar v_j \to v$ in $C^k$ for any $k$.
Let
$$ \phi_j = \frac{ u _j - v_j}{\|\na u_j -  \na v_j\|_{L^{p}}} \quad \ \text{ and }  \quad \ \bar \phi_j =  \frac{ u _j - \bar v_j}{\|\na u_j - \na v_j\|_{L^{p}}}.$$
Then \eqref{contradiction} implies that
\begin{equation} \label{goes to 0 a} 1 = \int |\na \phi_j |^{p}   \geq j \int | \na\bar \phi_j|^{p}  . \end{equation}
In particular, $\na \bar \phi_j \to 0 $ in $L^{p}.$
Now define $$\psi_j  =\phi_j - \bar \phi_j = \frac{ \bar v_j - v_j}{\|\na u_j - \na v_j\|_{L^{p}}}.$$
For any $\eta>0$, \eqref{goes to 0 a} implies that $1-\eta \leq \|\na\psi_j\|_{L^p} \leq 1+ \eta$ for $j$ large enough. In particular, $\{\na \psi_j\}$ is bounded in $L^{p}$ and so $\na \psi_j \rightharpoonup \na \psi $ in $L^{p}$ for some $\psi\in \dot W^{1,p}$.

We now consider the finite dimensional manifold $\bar \M := \{ v - \bar v : v, \bar v \in \M\}$.
Since $v_j, \bar v_j \to v,$ the sequences $\{\lambda_j\},\{1/\lambda_j\}, \{ y_j\}, \{\bar \lambda_j\},\{1/\bar \lambda_j\}$ and $\{\bar y_j\}$ are contained in some compact set, and thus all norms of $\bar v_j - v_j$ are equivalent: for any norm $ \vertiii{\cdot}$ on $\bar \M$ there exists $\mu>0$ such that 
\begin{equation} \label{all norms equivalent} \mu \| \na\bar v_j - \na v_j \|_{L^{p}} \leq  \vertiii{\na \bar v_j -\na v_j} \leq \frac{1}{\mu}  \| \na\bar v_j -\na v_j \|_{L^{p}}.\end{equation}
Dividing \eqref{all norms equivalent} by $\|\na u_j - \na v_j\|_{L^{p}}$ gives
 \begin{equation} \label{all norms equivalent 2}
  \mu(1 - \eta )\leq \mu \|\na \psi_j \|_{L^{p}}  \leq \vertiii{\na\psi_j}\leq \frac{1}{\mu}\| \na \psi_i\|_{L^p}  \leq \frac{1 + \eta}{\mu}.\end{equation}
 Taking the norm $\vertiii{\cdot} = \| \cdot \|_{C^k}$, the upper bound in \eqref{all norms equivalent 2} and the Arzel\`{a}-Ascoli theorem imply that $\psi_j$ converges, up to a subsequence, to $\psi$ in $C^k$. The lower bound in \eqref{all norms equivalent 2} implies that $\| \psi\|_{C^k} \neq 0$.

To get a contradiction, we use the minimality of $v_j$ for $\dd(u_j, \M)$ to obtain
\begin{equation}\label{square}\begin{split}
\int |\na \bar v_j |^{p-2} &|\na \bar \phi_j|^2+(p-2) \int |\na \bar v_j |^{p-2} |\pa_r \bar \phi_j|^2  \geq \int |\na v_j|^{p-2} |\na \phi_j|^2+ (p-2) \int |\na v_j|^{p-2} |\pa_r \phi_j|^2 \\
& = \int |\na v_j|^{p-2} |\na \bar \phi_j|^2 +2 \int |\na v_j|^{p-2} \na \bar\phi_j \cdot \na \psi_j + \int |\na v_j|^{p-2} |\na \psi_j|^2\\
&+ (p-2)\left( \int |\na v_j|^{p-2} |\pa_r \bar \phi_j|^2 +2 \int |\na v_j|^{p-2} \pa_r \bar\phi_j  \pa_r \psi_j + \int |\na v_j|^{p-2} |\pa \psi_j|^2\right).
\end{split}\end{equation}
Since 
$$ \int |\na \bar v_j |^{p-2} |\na \bar \phi_j|^2  -  \int |\na v_j|^{p-2} |\na \bar \phi_j|^2 \to 0$$
and 
$$ \int |\na \bar v_j |^{p-2} |\pa_r \bar \phi_j|^2  -  \int |\na v_j|^{p-2} |\pa_r\bar \phi_j|^2 \to 0,$$
\eqref{square} implies that
\begin{equation} \label{contra1}
\begin{split} 
0 &\geq 2\underset{j\to \infty}\lim\int |\na v_j|^{p-2} \na \bar\phi_j \cdot \na \psi_j +\underset{j \to \infty}{\lim} \int |\na v_j|^{p-2} |\na \psi_j|^2\\
&+(p-2)\left(2\underset{j\to \infty}\lim \int |\na v_j|^{p-2} \pa_r \bar\phi_j  \pa_r \psi_j +
\underset{j\to \infty}\lim \int |\na v_j|^{p-2} |\pa \psi_j|^2\right)
.
\end{split}
\end{equation}
However, since $\na \bar \phi_j \to 0 $ in $L^{p}$,
$$
\underset{j\to \infty}\lim\int |\na v_j|^{p-2} \na \bar\phi_j \cdot \na \psi_j  =0 \quad \text{ and } \quad 
\underset{j\to \infty}\lim \int |\na v_j|^{p-2} \pa_r \bar\phi_j  \pa_r \psi_j  =0.
$$
In addition, the terms 
$$
  \int |\na v_j|^{p-2} |\na \psi_j|^2 \quad \text{ and } \int |\na v_j|^{p-2} |\pa_r \psi_j|^2
$$
 converge to something strictly positive, as $\psi_j \to \psi \not\equiv 0$ and $v_j \to v$ with $\nabla v(x) \neq 0$ for all $x\neq 0$.
 This contradicts \eqref{contra1} and concludes the proof. 
\end{proof}
The following Poincar\'{e} inequality will be used in the proof of Proposition~\ref{Bounds on the deficit}:
\begin{lemma}
There exists a constant $C>0$ such that 
 \begin{equation}\label{Poincare}
 \int |v|^{p^*-2}|\vphi|^2\leq C \int |\na v|^{p-2}|\na \vphi|^2
 \end{equation}
for all $\vphi \in \dot W^{1,p}$.
\end{lemma}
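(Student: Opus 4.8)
The plan is to prove this weighted Poincaré inequality as a Sobolev-type inequality for the measure $|\na v|^{p-2}\,dx$, with the weight $|v|^{p^*-2}$ on the left. Since $v = c_0 v_{\lambda_0,y_0}$ and the inequality is invariant under the scaling $\vphi \mapsto \vphi(\lambda(\cdot - y))$ and under multiplication of $v$ by a constant (both sides scale the same way under $c_0 \mapsto t c_0$, using that $p^*-2 = (p-2) + (p^*-p)$ and $|\na v|^{p-2}$, $|v|^{p^*-2}$ pick up matching powers of $\lambda$), it suffices to prove it for the fixed profile $v = v_1$ as in \eqref{v1}. For this profile one has the explicit asymptotics $|\na v_1|(x) \sim |x|^{-1/(p-1)}$ and hence $|\na v_1|^{p-2} \sim |x|^{-(p-2)/(p-1)}$ as $|x|\to\infty$, while $|v_1|^{p^*-2} \sim |x|^{-(p^*-2)(n-p)/p} = |x|^{-(n-p)p^*/p + (n-p)\cdot 2/p}$, and near the origin both weights are bounded and bounded away from zero.

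The key observation is that the left-hand side is controlled by the right-hand side via the eigenvalue $\alpha_3$ of $\LL_v$: by the Rayleigh quotient characterization \eqref{lambda3}, for $w \perp \spann\{H_1\cup H_2\}$ we have $\int |v|^{p^*-2} w^2 \leq \alpha_3^{-1} \int A_v[\na w,\na w] \leq (p-1)\alpha_3^{-1} \int |\na v|^{p-2}|\na w|^2$, using Remark~\ref{rmk:dist}. So on the orthogonal complement of the low eigenspaces the inequality holds with constant $(p-1)\alpha_3^{-1}$. The point is then that it also holds, trivially, on $H_1 \cup H_2$: the functions $v$, $\partial_\lambda v$, $\partial_{y^i} v$ are smooth, decay at the rate described above, and one checks directly that $\int |v|^{p^*-2} w^2 < \infty$ and $\int |\na v|^{p-2}|\na w|^2 < \infty$ with a finite ratio for each such $w$ (the key being that $\LL_v$ has these functions as genuine eigenfunctions, so in particular $\int |\na v|^{p-2}|\na w|^2$ is finite and positive). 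A general $\vphi \in \dot W^{1,p}$ decomposes as $\vphi = \vphi_0 + \vphi^\perp$ with $\vphi_0 \in \spann\{H_1\cup H_2\}$ and $\vphi^\perp$ orthogonal to it in the $\langle\cdot,\cdot\rangle$ inner product of \eqref{ip}; applying the previous estimate to $\vphi^\perp$ and the finite-dimensional bound to $\vphi_0$, together with the orthogonality $\int |\na v|^{p-2}\na\vphi_0\cdot\na\vphi^\perp \,(\text{weighted appropriately})$ — more precisely $\langle \LL_v\vphi_0,\vphi^\perp\rangle = 0$ — yields \eqref{Poincare} with a constant depending only on $n$ and $p$.

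The main obstacle is making the decomposition argument rigorous: one must know that every $\vphi \in \dot W^{1,p}$ actually lies in the weighted space $L^2(|v|^{p^*-2})$ with $\na\vphi \in L^2(|\na v|^{p-2})$, so that the spectral decomposition applies and the cross terms vanish. This requires the compact embedding of $\{\na\vphi\in L^2(|\na v|^{p-2})\}$ into $L^2(|v|^{p^*-2})$ established in Section~\ref{Spectral} (Corollary~\ref{Compact embedding}), which is exactly what underlies the discreteness of the spectrum in Proposition~\ref{lem: spectral properties}; in particular a density argument is needed to pass from smooth compactly supported $\vphi$ to general $\vphi\in\dot W^{1,p}$. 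A slightly cleaner alternative that avoids the full spectral decomposition: absorb the finite-dimensional directions by noting that if \eqref{Poincare} failed, a normalized minimizing sequence $\vphi_k$ with $\int|\na v|^{p-2}|\na\vphi_k|^2 \to 0$ would, by the compact embedding, converge in $L^2(|v|^{p^*-2})$ to some $\vphi$ with $\int|v|^{p^*-2}|\vphi|^2 = 1$ but $\na\vphi = 0$ a.e. where $\na v \neq 0$, i.e. a.e. on $\Rn$, forcing $\vphi \equiv 0$ — a contradiction. I would present this compactness argument as the main proof, as it sidesteps any delicate estimate on the eigenfunctions.
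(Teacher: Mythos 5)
Your main proposed argument and the paper's proof are genuinely different, and it is worth separating your two routes because they have different status.

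\textbf{Your primary route is circular.} You invoke the Rayleigh quotient characterization \eqref{lambda3}, the eigenvalue $\alpha_3$, and a spectral decomposition of $\vphi$ along $H_1,H_2$ and their complement. But this Poincar\'e inequality is exactly what underwrites the spectral theory you are using: in the proof of Corollary~\ref{discrete}, inequality \eqref{Poincare} is what gives the coercivity needed for the Direct Method to define $\LL^{-1}$ and hence to establish that $\LL_v$ has a discrete spectrum with the Rayleigh characterization. So you cannot use \eqref{lambda3} or the eigenvalues $\alpha_i$ to prove \eqref{Poincare}. (Also, the argument is overkill even on its own terms: the inequality is equivalent to $\alpha_1>0$; $\alpha_3$ plays no role.)

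\textbf{Your alternative compactness route is sound but is not what the paper does.} The paper's proof is a two-line variational argument: since $\delta\ge 0$ and $\delta(v)=0$, $v$ minimizes $\delta$, so the second variation $\frac{d^2}{d\e^2}\big|_{\e=0}\,\delta(v+\e\vphi)\ge 0$; expanding this and estimating the resulting terms gives \eqref{Poincare} directly, with an explicit constant, using nothing from Section~\ref{Spectral}. Your compactness argument, by contrast, leans on the Opic-type compact embedding $X\subset\subset L^2(v^{p^*-2})$ of Corollary~\ref{Compact embedding}; that corollary does not use \eqref{Poincare}, so there is no circularity, but it is considerably heavier machinery and yields no explicit constant. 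You also leave the rigidity step slightly underjustified: the weak limit $\vphi$ of the normalized sequence satisfies $\na\vphi=0$ a.e., hence is a constant $c$, and you must argue $c=0$. For $2p<n$ this is immediate because $v^{p^*-2}\notin L^1$; for $2p\ge n$ constants do lie in $L^2(v^{p^*-2})$, and one needs to show they are not in $X=W^{1,2}_0$ (the completion of $C_0^\infty$), which requires a separate Hardy-type estimate near infinity (and the degeneracy of $|\na v|^{p-2}$ at the origin must also be handled). This is not hard, but it is a real step and should not be waved away.

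\textbf{A small error in the scaling remark.} You assert that both sides of \eqref{Poincare} scale the same way under $c_0\mapsto tc_0$; this is false. The left-hand side carries $|v|^{p^*-2}$ and so scales by $t^{p^*-2}$, while the right-hand side carries $|\na v|^{p-2}$ and scales by $t^{p-2}$. Since $p^*\ne p$ these do not match, so \eqref{Poincare} is not invariant under rescaling $v$; the constant $C$ depends on $\|v\|_{L^{p^*}}$ (the lemma is applied with this quantity fixed). The invariance under the $(\lambda,y)$ action is fine.

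In summary: your compactness argument is a valid alternative proof, but it is more involved than the paper's direct second-variation computation, it requires the results of Section~\ref{Spectral}, and it is non-constructive; your spectral argument should be discarded as circular.
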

\begin{proof}
Let $v\in \M$ and $\vphi \in C_0^{\infty}.$ As $v$ is a local minimum of the functional $\delta$,
 \begin{align*} 0& \leq   \frac{d^2}{d\e^2 }\bigg|_{\e=0}\, \delta(v+ \e \vphi) = 
 p \int |\na v|^{p-2}|\na  \varphi|^2 + p(p-2)\int |\na v|^{p-2}|\pa_r  \varphi|^2\\
&- \SSp \left( p\Big(\frac{p}{p^*} - 1 \Big) \Big(\int|v|^{p^*}\Big)^{p/p^* -2} \Big(\int v^{p^*-2} v\,\vphi\Big)^2
+ p(p^*-1) \big(\int |v|^{p^*}  \big)^{p^*/p -1} \int |v|^{p^*-2} \vphi^2\right).
\end{align*}
Noting that 
\[
 \int |\na v|^{p-2}|\pa_r  \varphi|^2 \leq \int |\na v|^{p-2}|\na  \varphi|^2 
 \qquad \text{ and } \qquad
\Big(\int|v|^{p^*}\Big)^{p/p^* -2} \Big(\int v^{p^*-2} v\,\vphi\Big)^2 \geq 0,
\]
 this implies that
 $$0 \leq  p(p-1) \int |\na v|^{p-2}|\na  \varphi|^2 -\SSp p(p^*-1) \Big(\int |v|^{p^*}  \Big)^{p^*/p -1} \int |v|^{p^*-2} \vphi^2.$$
Thus \eqref{Poincare} holds for $\vphi \in C_0^{\infty}$, and for $\vphi\in \dot W^{1,p}$ by approximation.
 \end{proof}

We now prove Proposition~\ref{Bounds on the deficit}.
\begin{proof}[Proof of Proposition~\ref{Bounds on the deficit}]
First of all, thanks to \eqref{assumption2}, we can apply Proposition~\ref{small delta}(1) to ensure that some $v= c_0v_{\lambda_0, y_0} \in \M$ attains the infimum in \eqref{distance}. 
Also, expressing $u$ as $u =v + \e \vphi$ where $\int  |\na \vphi|^p   = 1$, it follows from Proposition~\ref{small delta}(2) and the discussion in Section \ref{sect:orthogonal}
that $\e$ can be assumed to be as small as desired (provided $\delta_0$ is chosen small enough) and that $\vphi$ satisfies \eqref{lambda constraint}, \eqref{y constraint}, and \eqref{volume 2}. 
Note that, since all terms in \eqref{bound2} and \eqref{boundP} are $p$-homogeneous, without loss of generality we may take $c_0 = 1.$\\

\noindent{\it{Proof of \eqref{bound2}.}}
The inequalities \eqref{num2} and \eqref{num4} are used to expand the gradient term and the function term in $\delta(u)$ respectively, splitting higher order terms between the second order and the $p^{\rm{th}}$ or $p^{*\rm{th}}$ order terms.

From \eqref{num2} and for $\ka=\ka(p,n)>0$ to be chosen at the end of the proof, we have
\begin{equation}\label{na u exp}\begin{split} \int |\na u |^p & \geq
  \int |\na v|^p + \e p \int |\na v|^{p-2} \na v \cdot \na \varphi   \\
  & +\frac{\e^2 p (1-\ka)}{2}\Bigl(\int |\na v|^{p-2}|\na  \varphi|^2 + (p-2)\int |\na v|^{p-2}|\pa_r  \varphi|^2\Big) -\e^p\,  \C \int |\na \vphi|^p.
  \end{split}\end{equation}
%noting that $\int |\na v|^{p-4}(\na  v \cdot \na \vphi)^2 =\int |\na v|^{p-2}|\pa_r  \varphi|^2  $ because $\na v =\pa_r v\,\hat r$ (see \eqref{Matrix A}). 
Note that the second order term is precisely $\frac{1}{2}\e^2 p (1-\ka) \int A_v[\na \vphi, \na \vphi].$
Similarly, \eqref{num4} gives
\begin{equation}\label{exp12}\begin{split}
\int |u|^{p^*} & \leq 1 + \e p^* \int v^{p^*-1 } \varphi 
+   \e^2\Big(\frac{p^*(p^*-1)}{2}+\frac{p^*\ka}{2\SSp}\Big) \int v^{p^*-2}\varphi^2 +\C\e^{p^*}  \int |\vphi|^{p^*}.
\end{split}
\end{equation}
 From the identity \eqref{pLap}, the first order term in  \eqref{exp12} is equal to
 \begin{align}\label{identity} 
\e p^* \int v^{p^*-1 } \varphi  & 
%=\e p^*  \int -\SSpm \Delta_p v\, \vphi   
 = \e p^* \SSpm \int |\na v|^{p-2} \na v\cdot \na \varphi  .
\end{align}
Using \eqref{identity} and recalling that $(p^*-1) \SSp = \alpha_2$ (see \eqref{lambda2}),  \eqref{exp12}  becomes
 \begin{align*}
 \int |u|^{p^*}  
 \leq  1 & +  \frac{\e p^*}{ \SSp} \int |\na v|^{p-2} \na v\cdot \na \varphi   +\frac{ \e^2 p^*(\alpha_2+\ka)}{2\SSp}  \int v^{p^*-2} \varphi^2  +\C\e^{p^*},
 \end{align*}
The following estimate holds, and is shown below:
\begin{align}\label{Lambda3Thing}
\e^2 \int v^{p^*-2}\vphi^2   &\leq (1+2\ka) \frac{\e^2}{\alpha_3}\int A_v[\na \vphi, \na \vphi ]
%\Big( \int|\na v|^{p-2}|\na {\vphi}|^2   +(p-2)\int |\na v|^{p-2}|\pa_r{\vphi}|^2  \Big)
+\C\e^p,
\end{align}
Philosophically, \eqref{Lambda3Thing} follows from a spectral gap analysis, using \eqref{lambda3} and the fact that \eqref{lambda constraint},  \eqref{y constraint}, and \eqref{volume 2} imply that $\vphi$ is ``almost orthogonal" to $H_1$ and $H_2$.

As $\e$ may be taken as small as needed, using \eqref{Lambda3Thing}
 we have
\begin{equation*}
 \int |u|^{p^*}  \leq  1 +  \frac{p^* }{\SSp} \bigg(\e\int |\na v|^{p-2} \na v\cdot \na \varphi  
 +\frac{\e^2(\alpha_2 +\ka)(1+2 \ka)}{2\alpha_3}\int A_v[\na \vphi, \na \vphi] + \C\e^{p} \bigg).
\end{equation*}
The function $z \mapsto |z|^{p/p^*}$ is concave, so $\|u\|_{L^{p^*}}^p \leq1 +\frac{p}{p^* }(\int |u|^{p^*} - 1)$: 
\begin{multline}\label{u exp}
\SSp\|u\|_{L^{p^*}}^p  
 \leq \SSp +  p  \bigg(\e\int |\na v|^{p-2} \na v\cdot \na \varphi 
 +\frac{\e^2(\alpha_2 +\ka)(1+2 \ka)}{2\alpha_3}\int A_v[\na \vphi, \na \vphi]  + \C\e^{p} \bigg).
\end{multline}
Subtracting \eqref{u exp} from \eqref{na u exp} gives  
\begin{align*} 
\delta(u) \geq 
\frac{\e^2p}{2} \left(1 - \ka - \frac{(\alpha_2 +\ka)(1+2 \ka)}{\alpha_3}\right)\int A[\na \vphi, \na \vphi]-\C \e^p.
\end{align*}
Since $1-\frac{\alpha_2}{\alpha_3}>0$, we may choose  $\ka$ sufficiently small so that  $1 -\ka-\frac{(\alpha_2 +\ka)(1+2\ka)}{\alpha_3}>0$. To conclude the proof of \eqref{bound2}, we need only to prove \eqref{Lambda3Thing}.
\\

\noindent{\it Proof of \eqref{Lambda3Thing}.}
If $\vphi$ were orthogonal to $T_v\M$ instead of almost orthogonal, that is, if the right-hand sides of  \eqref{lambda constraint},  \eqref{y constraint}, and \eqref{volume 2} were equal to zero, then \eqref{Lambda3Thing} would be an immediate consequence of \eqref{lambda3}. Therefore, the proof involves showing that the error in the orthogonality relations is truly higher order, in the sense that it can be absorbed in the other terms.
 
%In the proof of \eqref{Lambda3Thing}, we will use the fact that we may take $\e$ to be as small as needed, thanks to \eqref{assumption2} and Proposition~\ref{small delta}(2).
Up to rescaling $u$ and $v$, we may assume that $\lambda_0 =1$ and $y_0 = 0$. We recall the inner product  $\langle w, y \rangle$ defined in \eqref{ip} which gives rise to the norm 
$$
\|w\| =\Big( \int |v|^{p^*-2} w^2  \Big)^{1/2}.
$$
As in Section~\ref{Preliminaries}, 
we let $H_i$ denote the eigenspace of $\LL_v$ in $L^2( v^{p^*-2})$ corresponding to eigenvalue $\alpha_i$, so $H_i = \spann\{ Y_{i,j}\}_{j=1}^{N(i)},$ where $Y_{i,j}$ is an eigenfunction with eigenvalue $\alpha_i$ with $\| Y_{i,j}\| = 1.$
We express $\e\vphi$ in the basis of eigenfunctions:
$$
\e\vphi = \sum_{i=1}^{\infty} \sum_{j=1}^{N(i)} \beta_{i,j} Y_{i,j} \qquad {\rm{where}} \qquad\beta_{i,j}:=\e \int |v|^{p^*-2}  \vphi Y_{i,j}. 
$$
We let $\e\tilde{\vphi}$ be the truncation of $\e\vphi$:
$$
\e\tilde{\vphi}  = \e\vphi  - \sum_{i=1}^{2}\sum_{j=1}^{N(i)} \beta_{i,j} Y_{i,j},
$$
so that $\tilde{\vphi}$ is orthogonal to $\spann \{ H_1 \cup H_2\}$ and, introducing the shorthand
$\beta_i^2 : = \sum_{j=1}^{N(i)} \beta_{i,j}^2$,
%for each $i$.
\begin{equation}\label{norm relation} \int |v|^{p^*-2} (\e \vphi)^2    =   \int |v|^{p^*-2}(\e \tilde{\vphi})^2   +\beta_1^2 + \beta_2^2 .\end{equation}
Applying \eqref{lambda3} to $\tilde{\vphi}$ implies that 
$$ \int |v|^{p^*-2}(\e \tilde{\vphi})^2 \leq \frac{\e^2}{\alpha_3} \langle \LL_v\tilde{\vphi}, \tilde{\vphi}\rangle,$$
which combined with \eqref{norm relation} gives
\begin{equation}\begin{split}\label{L3fixA}
 \int |v|^{p^*-2} (\e \vphi)^2     
 \leq \frac{\e^2}{\alpha_3} \langle \LL_v\tilde{\vphi}, \tilde{\vphi}\rangle + \beta_1^2  +\beta_2^2
&=\frac{1}{\alpha_3} \sum_{i=3}^{\infty}\alpha_i \beta_i^2  + \beta_1^2 + \beta_2^2\\
& \leq \frac{\e^2}{\alpha_3}\langle \LL_v \vphi, \vphi\rangle +    \Bigl(1- \frac{\alpha_1}{\alpha_3}\Bigr)(\beta_1^2 + \beta_2^2).
  \end{split}
\end{equation}
We thus need to estimate $\beta_1^2 + \beta_2^2.$ 
The constraint \eqref{volume 2} implies
\begin{align*} \beta_1^2& \leq \Big(\e^2\,\frac{p^*-1 + \ka}{2}  \int |v|^{p^*-2} |\vphi|^2   + \C \e^{p^*} \int |\vphi|^{p^*} \Big)^2\\
&\leq \C \e^4 \Big(\int |v|^{p^*-2} |\vphi|^2\Big)^2 + \C \e^{2p^*} \Big(\int |\vphi|^{p^*} \Big)^2.
\end{align*}
By \eqref{Poincare}, $\int |v|^{p^*-2} |\vphi|^2 \leq \int \na v|^{p-2} |\na \vphi|^2.$
Furthermore, both 
$\int |\na v|^{p-2} |\na \vphi|^2$ and $\int |\vphi|^{p^*} $ are universally bounded, so for $\e$ sufficiently small depending only on $p$ and $n$ and $\ka$,
\begin{equation}\label{beta 1 bound} 
\beta_1^2 \leq \frac{\ka \e^2 }{\alpha_3}\Big( \int |\na v|^{p-2} |\na \vphi|^2 +(p-2) \int |\na v|^{p-2} |\pa_r \vphi|^2\Big)   + \C \e^{p} .
\end{equation}
For $\beta_{2,1}^2$, we notice that H\"{o}lder's inequality and \eqref{lambda constraint} imply
\begin{equation}\label{beta bound} \begin{split}
\beta_{2,1}^2  &
\leq \Big(C_{p,n}\e^2 \int |\na v|^{p-3} | \na \vphi |^2 \frac{|\na \pl v| }{\| \pl v\|} \Big)^2 \\
&  \leq C_{p,n} \frac{\int |\na v|^{p-2} |\na \pl v|^2 }{\|\pl v\|^2 }  \int |\na v|^{p-4} |\e\na  \vphi|^4
=C_{p,n} \e^4 \int |\na v|^{p-4} | \na \vphi|^4,
 \end{split}\end{equation}
 where the final equality follows because the term $\int |\na v|^{p-2} |\na \pl v|^2/\|\pl v\|^2$ is bounded (in fact, it is bounded by $\alpha_2$). Then, using Young's inequality, we get
 $$\beta_{2,1}^2 
 \leq \frac{\e^2 \ka}{(n+1)\alpha_3}\Big(\int |\na v|^{p-2 } | \na \vphi|^2+(p-2)\int |\na v|^{p-2} |\pa_r \vphi|^2\Big)
    + C_{\ka, p}\e^p \int |\na \vphi |^p.$$

The analogous argument using \eqref{y constraint} implies that 
\begin{equation}\label{ok}
\beta_{2,j}^2  \leq C_{p,n} \e^4 \int |\na v|^{p-4} | \na \vphi|^4
+ C_{p,n}\e^4\left(\int |\na v|^{p-2} \pa_r  \vphi \na \vphi \cdot \frac{\pa_{y^i} \hat{r}}{\| \pa_y v\|} \right)^2.
\end{equation}
 for $j=2,\hdots, n+1$. 
 For the second term in \eqref{ok},
  H\"{o}lder's inequality implies that 
$$\left(\int |\na v|^{p-2} \pa_r  \vphi \na \vphi \cdot \frac{\pa_{y^i} \hat{r}}{\| \pa_y v\|} \right)^2
\leq 
\int |\na v|^{p-4} |\na \vphi |^4 \int |\na v|^{p} \frac{|\pa_{y^i} \hat{r}|^2}{\| \pa_{y^i} v\|^2}.$$
Since 
\begin{align*} \pa_{y^i} \hat{r} = \frac{x^ix}{|x|^3}, \qquad |\pa_{y^i} \hat{r}| \leq \frac{1}{|x|},
\end{align*}
we find that $\int |\na v|^{p} \frac{|\pa_{y^i} \hat{r}|^2}{\| \pa_{y^i} v\|^2}$ converges, so \eqref{ok} implies that 
\begin{equation*}
\beta_{2,j}^2  \leq C_{p,n} \e^4 \int |\na v|^{p-4} | \na \vphi|^4.
\end{equation*}
Then using Young's inequality just as in \eqref{beta bound}, we find that 
 $$\beta_{2,j}^2 
 \leq \frac{\e^2 \ka}{(n+1)\alpha_3}\Big(\int |\na v|^{p-2 } | \na \vphi|^2 +(p-2)\int |\na v|^{p-2} |\pa_r \vphi|^2\Big)
    + C_{\ka, p}\e^p \int |\na \vphi |^p,$$
and thus
\begin{equation}\label{beta 2 bound}
\beta_{2}^2\leq \frac{\e^2 \ka}{\alpha_3}\Big(\int |\na v|^{p-2 } | \na \vphi|^2 +(p-2)\int |\na v|^{p-2} |\pa_r \vphi|^2\Big)
  + C_{\ka, p}\e^p .
  \end{equation}
Together \eqref{L3fixA}, \eqref{beta 1 bound}, and \eqref{beta 2 bound} imply \eqref{Lambda3Thing}, as desired.
\\

\noindent \textit{Proof of \eqref{boundP}.}
The proof of \eqref{boundP} is similar to, but simpler than, the proof of \eqref{bound2}, as no spectral gap or analysis of the second variation is needed. The principle of the expansion is the same, but now we use \eqref{num1} and \eqref{num3} for the expansion, putting most of the weight of the higher order terms on the second order term and preserving the positivity of the $p^{\rm{th}}$ order term.

From \eqref{num1}, we have 
\begin{equation}\label{nauP}
 \int |\na u |^p   \geq \int |\na v|^p  + p\e \int |\na v|^{p-2} \na v \cdot \na \vphi   -\C\, \e^2 \int |\na v|^{p-2}|\na  \varphi|^2  + \frac{\e^p}{2} \int |\na \vphi|^p .
 \end{equation}
 Similarly, \eqref{num3} implies
 \begin{equation}\label{exp1a}\begin{split}
\int |u|^{p^*}   \leq 1
+ \e p^* &\int v^{p^*-1 } \varphi  
+ \C \, \e^2 \int v^{p^*-2}\varphi^2  + 2\e^{p^*} \int |\vphi|^{p^*} .
\end{split}
\end{equation}
As before,
 the identity \eqref{pLap} implies \eqref{identity}, so \eqref{exp1a} becomes 
 \begin{align*}
 \int |u|^{p^*}  
 \leq 1 & +  \e p^* \SSpm \int |\na v|^{p-2} \na v\cdot \na \varphi  + \C \, \e^2 \int v^{p^*-2}\varphi^2  + 2 \e^{p^*}\int |\vphi|^{p^*} .
 \end{align*}
By the Poincar\'{e} inequality \eqref{Poincare},
\begin{align*}\int |u|^{p^*}  & \leq  1 + \e p^* \SSpm \int |\na v|^{p-2} \na v\cdot \na \varphi  + \C \e^2 \int|\na v|^{p-2}  |\na \varphi|^2 + 2\e^{p^*} .
\end{align*} 
As in \eqref{u exp}, the concavity of $z \mapsto |z|^{p/p^*}$ yields 
\begin{equation}\label{uexp}\begin{split}
\SSp\|u\|_{L^{p^*}}^p & \leq \SSp +\e p \int |\na v|^{p-2} \na v\cdot \na \varphi  +\C \e^2 \int|\na v|^{p-2}  |\na \varphi|^2 + \C \e^{p^*}.
\end{split}
\end{equation}
Subtracting \eqref{uexp} from  \eqref{nauP} gives
\begin{align*} \delta(u) &\geq 
 -\C \, \e^2 \int |\na v|^{p-2}|\na  \varphi|^2  + \frac{\e^p}{2} -\C \e^{p^*}
 \\
 & \geq -\C\dd(u,M)^2  + \frac{\e^p}{4}.
 \end{align*}
The final inequality follows from Remark~\ref{rmk:dist} and once more taking $\e$ is as small as needed. This concludes the proof of \eqref{boundP}.
\end{proof}

\begin{corollary} \label{Regime 1 and 3} Suppose $u\in \dot W^{1,p}$ is a function satisfying \eqref{assumption2} and $v\in \M$ is a function where the infimum in \eqref{distance} is attained.  There exist constants $\C_*, \cc_*$ and $c$, depending on $n$ and $p$ only, such that if 
\begin{equation}\label{extreme} 
\C_* \leq \frac{\int A_v [\na u  - \na v, \na u - \na v]  }{\int |\na u - \na v|^p  } \ \ \ \text{ or } \ \ \ \cc_* \geq \frac{\int A_v [\na u  - \na v, \na u - \na v]  }{\int |\na u - \na v|^p  },
\end{equation}
then
$$c \int |\na u - \na v|^p \leq  \delta(u).$$
\end{corollary}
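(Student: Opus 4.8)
The plan is to combine directly the two inequalities \eqref{bound2} and \eqref{boundP} furnished by Proposition~\ref{Bounds on the deficit}. Since $u$ satisfies \eqref{assumption2} and $v \in \M$ attains the infimum in \eqref{distance}, that proposition applies. Abbreviate $X := \int A_v[\na u - \na v, \na u - \na v] = \dd(u,\M)^2$ and $Y := \int |\na u - \na v|^p$; if $Y = 0$ then $u = v$ and the conclusion holds trivially, so I may assume $Y > 0$ and in particular that the ratio appearing in \eqref{extreme} makes sense.

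Suppose first that this ratio is large, i.e.\ $X \geq \C_* Y$, the first alternative in \eqref{extreme}. Plugging this into \eqref{bound2} gives
\[
\delta(u) \;\geq\; \cc_1 X - \C_2 Y \;\geq\; (\cc_1 \C_* - \C_2)\, Y ,
\]
so I would choose $\C_*$ large enough that $\cc_1 \C_* - \C_2 > 0$, for instance $\C_* := (\C_2 + 1)/\cc_1$. Suppose instead that the ratio is small, i.e.\ $X \leq \cc_* Y$, the second alternative in \eqref{extreme}. Plugging this into \eqref{boundP} gives
\[
\delta(u) \;\geq\; -\C_3 X + \frac14 Y \;\geq\; \Bigl(\frac14 - \C_3 \cc_*\Bigr) Y ,
\]
so I would choose $\cc_*$ small enough that $\frac14 - \C_3 \cc_* > 0$, for instance $\cc_* := 1/(8\C_3)$. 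Setting $c$ equal to the minimum of the two resulting positive coefficients --- each depending only on $n$ and $p$, since $\cc_1, \C_2, \C_3$ do --- then yields $c\int |\na u - \na v|^p \leq \delta(u)$ in either regime.

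I expect no real obstacle: all of the analytic content sits in Proposition~\ref{Bounds on the deficit}, and the corollary is simply the elementary observation that its two a priori weak inequalities --- one effective only when $\dd(u,\M)^2$ dominates $\int |\na u - \na v|^p$, the other only in the opposite regime --- combine to give the full-strength bound away from the intermediate regime $\dd(u,\M)^2 \approx \int |\na u - \na v|^p$. The only mild point of care is the bookkeeping of how $\C_*$, $\cc_*$, and $c$ depend on $\cc_1, \C_2, \C_3$.
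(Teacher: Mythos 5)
Your proof is correct and follows the same strategy as the paper: choose $\C_*$ and $\cc_*$ so that in each regime the unfavorable term in \eqref{bound2} or \eqref{boundP} can be absorbed by the favorable one. The only minor difference is in the first regime, where the paper first deduces $\cc_1\,\dd(u,\M)^2/2 \leq \delta(u)$ from \eqref{bound2} and then bootstraps via \eqref{boundP} to control $\int|\na u - \na v|^p$, whereas you substitute $X \geq \C_* Y$ directly into \eqref{bound2} and conclude in one step; both are sound, and yours is marginally more direct.
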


\begin{proof} 
Let $\C_* = \frac{2\C_2}{\cc_1}$ and let $\cc_* = \frac1{8\C_3}$ where $\cc_1, \C_2$ and $\C_3$ are as defined in Proposition~\ref{Bounds on the deficit}. 
First suppose that $u$ satisfies the first condition in \eqref{extreme}. Then in \eqref{bound2}, we may absorb the term $\C_2 \int |\na u - \na v|^p $ into the term $\cc_1 \dd(u, \M)^2,$ giving us 
$$ \frac{\cc_1}{2} \, \dd(u, \M)^2 \leq \delta(u).$$
Given this control, we may bootstrap using \eqref{boundP} to gain control of the stronger distance:
$$\frac{1}{4} \int |\na u - \na v|^p \leq \delta(u) + \C_3\,  \dd(u, \M)^2 \leq \C \delta(u).$$

Similarly, if $u$ satisfies the second condition in \eqref{extreme}, then we may absorb the term
$\C_3 \, \dd(u, \M)^2$ into the term $\frac{1}{4} \int |\na u - \na v|^p$ in \eqref{boundP}, giving us
 $$\frac{1}{8} \int |\na u - \na v|^p \leq  \delta(u) .$$
 \end{proof}

\section{Proofs of Theorem~\ref{MainThm} and Corollary~\ref{MainCor}}\label{Conclude}
Corollary~\ref{Regime 1 and 3} implies Theorem~\ref{MainThm} for the functions $u\in \dot W^{1,p}$ that satisfy \eqref{assumption2} and that lie in one of the two regimes described in \eqref{regimes}. Therefore, to prove Theorem~\ref{MainThm}, it remains to understand the case when the terms $\int A_v [\na u  - \na v, \na u - \na v]$ and $\int |\na u - \na v|^p$ are comparable and to remove the assumption \eqref{assumption2}. The following proposition accomplishes the first.
 
\begin{proposition}\label{Regime 2}
Let $u\in \dot W^{1,p}$ be a function satisfying \eqref{assumption2}, and let $v \in \M$ be a function where the infimum in \eqref{distance} is attained. If
\begin{equation}\label{bad regime}\cc_* \leq \frac{\int A_v [\na u  - \na v, \na u - \na v]  }{\int |\na u - \na v|^p  }\leq \C_*,\end{equation}
where $\cc_*$ and $\C_*$ are the constants from the Corollary~\ref{Regime 1 and 3}, then
\begin{equation}\label{bad regime ok}\int |\na u -\na v|^p \leq  C\delta(u) + C\|v\|_{L^{p^*}}^{p-1}\| u -v\|_{L^{p^*}}
\end{equation}
for a constant $C$ depending only on $p$ and $n$.
\end{proposition}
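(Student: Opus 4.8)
By the $p$-homogeneity of \eqref{bad regime ok} we may normalize $\|u\|_{L^{p^*}}=1$, so that $\|v\|_{L^{p^*}}=1$ as well by \eqref{volume constraint}. The plan is to bring $u$ into the regime already settled by Corollary~\ref{Regime 1 and 3} through a linear interpolation toward $v$: set $u_t := (1-t)u + tv = v + (1-t)(u-v)$ for $t\in[0,1)$, and record the exact scalings $\na u_t-\na v=(1-t)(\na u-\na v)$ and $u_t-v=(1-t)(u-v)$, whence
$$\int|\na u_t-\na v|^p=(1-t)^p\int|\na u-\na v|^p,\qquad\int A_v[\na u_t-\na v,\na u_t-\na v]=(1-t)^2\int A_v[\na u-\na v,\na u-\na v].$$
The ratio of the two left-hand sides is therefore $(1-t)^{2-p}$ times the corresponding ratio for $u$, which by the lower bound in \eqref{bad regime} is at least $\cc_*(1-t)^{2-p}$; since $2-p<0$ this tends to $+\infty$ as $t\uparrow1$. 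I would then fix, once and for all, a value $t_*\in(0,1)$ depending only on $n,p$ (through $\cc_*,\C_*$) so that $u_{t_*}$ lands comfortably in the first regime of \eqref{extreme}.

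Since $\e:=\|\na u-\na v\|_{L^p}$ may be taken as small as we wish by Proposition~\ref{small delta}(2) and $t_*$ is fixed, $u_{t_*}$ is a small perturbation of the extremal $v$. The compactness and continuity arguments from the proof of Proposition~\ref{small delta} then give, for $\delta_0$ small: $u_{t_*}$ satisfies \eqref{assumption2} (its deficit is small by the estimate below, which involves only $v$), its minimizer $v_{t_*}$ in \eqref{distance} exists, and both $\dd(u_{t_*},\M)^2$ and $\inf_{w\in\M}\|\na u_{t_*}-\na w\|_{L^p}$ are comparable — with constants depending only on $n,p$ — to $(1-t_*)^2\dd(u,\M)^2$ and $(1-t_*)\e$ respectively. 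Hence $u_{t_*}$, together with $v_{t_*}$, obeys the first inequality of \eqref{extreme}, and Corollary~\ref{Regime 1 and 3} produces a constant $c=c(n,p)>0$ with
$$c\,(1-t_*)^p\int|\na u-\na v|^p\;\le\; c\int|\na u_{t_*}-\na v_{t_*}|^p\;\le\;\delta(u_{t_*}).$$

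The remaining task is to control $\delta(u_{t_*})$ by $\delta(u)$, and this is where the term $\|v\|_{L^{p^*}}^{p-1}\|u-v\|_{L^{p^*}}$ enters. By convexity of $z\mapsto|z|^p$ on $\R^n$, $\|\na u_{t_*}\|_{L^p}^p\le(1-t_*)\|\na u\|_{L^p}^p+t_*\|\na v\|_{L^p}^p$. Since $v\in\M$, $\|\na v\|_{L^p}^p=\SSp$, and $\|\na u\|_{L^p}^p=\delta(u)+\SSp$; therefore
$$\delta(u_{t_*})=\|\na u_{t_*}\|_{L^p}^p-\SSp\|u_{t_*}\|_{L^{p^*}}^p\le(1-t_*)\delta(u)+\SSp\big(1-\|u_{t_*}\|_{L^{p^*}}^p\big).$$
As $\|u_{t_*}\|_{L^{p^*}}\ge1-\|u_{t_*}-v\|_{L^{p^*}}\ge0$ for $\e$ small, convexity of $s\mapsto s^p$ on $[0,\infty)$ bounds $1-\|u_{t_*}\|_{L^{p^*}}^p$ by $p\big(1-\|u_{t_*}\|_{L^{p^*}}\big)\le p\|u_{t_*}-v\|_{L^{p^*}}=p(1-t_*)\|u-v\|_{L^{p^*}}$, so $\delta(u_{t_*})\le\delta(u)+C\|u-v\|_{L^{p^*}}$. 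Combining with the previous display and dividing by $c(1-t_*)^p$, a constant depending only on $n,p$, yields \eqref{bad regime ok} (recalling $\|v\|_{L^{p^*}}=1$).

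The main obstacle is the claim in the second paragraph that $u_{t_*}$ genuinely lands in an extreme regime of \eqref{extreme} relative to its \emph{own} minimizer $v_{t_*}$ rather than relative to $v$: one must show that $v_{t_*}$ drifts from $v$ only by a quantity of order $\e^2$ — lower order than the size $(1-t_*)\e$ of $u_{t_*}-v$, because $u-v$ is nearly orthogonal to $T_v\M$ by Section~\ref{sect:orthogonal} — and thereby pin down, in $n$- and $p$-dependent constants only, the scaling of $\dd(u_{t_*},\M)$ and of $\inf_w\|\na u_{t_*}-\na w\|_{L^p}$. The deficit comparison is by contrast elementary once the convexity of the $p$-th power is combined with the extremality of $v$; it is nonetheless precisely what forces the weaker $L^{p^*}$ remainder into Theorem~\ref{MainThm}.
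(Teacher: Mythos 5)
Your interpolation idea is the same as the paper's (up to the relabeling $t\mapsto 1-t$, you push $t\uparrow 1$ rather than $t\downarrow 0$), and your deficit estimate $\delta(u_{t_*})\leq C\delta(u)+C\|v\|_{L^{p^*}}^{p-1}\|u-v\|_{L^{p^*}}$ via convexity of $z\mapsto |z|^p$ and the triangle inequality is correct and matches the paper's. But you have a genuine gap, and it is precisely the one you flag at the end: you try to apply Corollary~\ref{Regime 1 and 3} to the pair $(u_{t_*},v_{t_*})$, where $v_{t_*}$ is $u_{t_*}$'s own minimizer in \eqref{distance}, and this requires controlling how far $v_{t_*}$ drifts from $v$. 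Moreover, the inequality you invoke, $(1-t_*)^p\int|\na u-\na v|^p = \int|\na u_{t_*}-\na v|^p\leq\int|\na u_{t_*}-\na v_{t_*}|^p$, runs in the \emph{wrong} direction a priori: $v_{t_*}$ minimizes a distance comparable to $\|\na u_{t_*}-\na(\cdot)\|_{L^p}$, so the natural bound is $\int|\na u_{t_*}-\na v_{t_*}|^p\lesssim\int|\na u_{t_*}-\na v|^p$. Reversing this would require proving the claimed $O(\e^2)$ stability of the minimizer in the $(\lambda,y)$-parameters (say by a uniform implicit-function-theorem argument exploiting non-degeneracy of the Hessian of $E(\lambda,y)$ in \eqref{e1}), which you state as necessary but do not carry out.

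The paper avoids this entirely with a different and cheaper observation: one does not need $v$ to be the minimizer for $u_{t_*}$. Proposition~\ref{Bounds on the deficit} and hence Corollary~\ref{Regime 1 and 3} are proved by expanding around a fixed $v\in\M$ and using only (a) the near-orthogonality constraints \eqref{lambda constraint}, \eqref{y constraint}, \eqref{volume 2}, and (b) that $\e$ is small. Writing $u_{t_*}-v=t_*\e\vphi$ with the \emph{same} $\vphi$, the constraints for $(u,v)$ rescale by $t_*$ and remain valid for $(u_{t_*},v)$ with constants changed by a fixed factor $t_*^{-1}$ or $t_*^{1-p^*}$, and $t_*\e$ is small whenever $\e$ is. So the corollary applies to $u_{t_*}$ paired with the original $v$, giving $c\int|\na u_{t_*}-\na v|^p\leq\delta(u_{t_*})$ directly — no new minimizer, no stability of the $\arg\min$ required. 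This is the missing ingredient in your proposal.
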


 \begin{proof}
 Suppose $u$ lies in the regime \eqref{bad regime}. Then we consider the linear interpolation $u_t := tu + (1-t) v$
 and notice that 
$$\frac{\int A_v [\na u_t  - \na v, \na u_t - \na v]}{\int | \na u_{t}-\na v|^p } 
= \frac{ t^{2}\int A_v [\na u  - \na v, \na u - \na v]}{t^p \int | \na u-\na v |^p }\geq t^{2-p}\cc_*.$$
Hence, there exists  $t_*$ sufficiently small, depending only on $p$ and $n$, such that $t_*^{2-p}\cc_* > \C_*$. 

We claim that we may apply Corollary~\ref{Regime 1 and 3} to $u_{t_*}$. 
This is not immediate because $v$ may not attain the infimum in \eqref{distance} for $u_{t_*}$. However, each step of the proof holds if we expand $u_{t_*}$ around $v$. Indeed, keeping the previous notation of $ u - v=\e \vphi $ with $\int |\na \vphi|^p=1$, we have $ u_{t_*} - v=t_*\e \vphi $. so the orthogonality constraints in \eqref{lambda constraint}, \eqref{y constraint}, and \eqref{volume 2} still hold for $u_{t_*}$ and $v$ by simply multiplying through by $t_*$ (this changes the constants by a factor of $t_*$ but this does not affect the proof). Furthermore, \eqref{assumption2} is used in the proofs of Proposition~\ref{Bounds on the deficit} and \eqref{Lambda3Thing} to ensure that $\e$ is a small as needed to absorb terms.  Since $t_*<1$, if $\e$ is sufficiently small then so is $t_* \e$. With these two things in mind, every step in the proof of Proposition~\ref{Bounds on the deficit}, and therefore Corollary~\ref{Regime 1 and 3} goes through for $u_{t_*}$.

Corollary~\ref{Regime 1 and 3} then implies that
$$   t_*^p  \int  |\na u - \na v|^p=\int |\na u_{t_*} - \na v|^p  \leq  C\delta(u_{t_*}).$$
Therefore, \eqref{bad regime ok} follows if we can show
\begin{equation}\label{bound def t}
\delta(u_{t_*}) \leq C\delta(u) +C\|v\|_{L^{p^*}}^{p-1} \|u - v\|_{L^{p^*}}.
\end{equation}
In the direction of \eqref{bound def t}, by convexity and recalling that $\|\na v\|_{L^p} = \SSS \|v\|_{L^{p^*}} = \SSS \| u \|_{L^{p^*}},$ we have
\begin{equation}\label{convexity 1} \begin{split}
\delta(u_{t_*}) &= \int |t_* \na u + (1-t_* ) \na v|^p   - \SSp\, \|t_* u + (1-t_* ) v\|_{L^{p^*}}^{p}\\
& \leq  t_* \int | \na u |^p+ (1-t_* )\int | \na v|^p   - \SSp\|t_* u + (1-t_* ) v\|_{L^{p^*}}^{p} \\
&= t_* \,\delta(u)+  \SSp\,\left(\| v\|_{L^{p^*}}^p - \|t_* u + (1-t_*) v\|_{L^{p^*}}^{p}\right).
\end{split}
\end{equation}
Also, by the triangle inequality,
$$  \| t_*(u-v) + v\|_{L^{p^*}}^p 
 \geq (\| v\|_{L^{p^*}} -\| t_*(u-v)\|_{L^{p^*}})^p,$$
and by the convexity of the function $f(z) = |z|^p$, 
$f(z+y) \geq f(z) + f'(z) y$,  and so \begin{align*} 
 (\| v\|_{L^{p^*}} -\| t_*(u-v)\|_{L^{p^*}})^p \geq  \| v\|_{L^{p^*}} -p \| v\|_{L^{p^*}}^{p-1}\| u-v\|_{L^{p^*}} .
 \end{align*}
 These two inequalities imply that
 $$\| v\|_{L^{p^*}}^p - \|t_* u + (1-t_*) v\|_{L^{p^*}}^{p} \leq p \| v\|_{L^{p^*}}^{p-1}\| u-v\|_{L^{p^*}} .
$$
Combining this with \eqref{convexity 1} yields \eqref{bound def t}, concluding the proof.
\end{proof}
From here, the proof of Theorem~\ref{MainThm} follows easily:
\begin{proof}[Proof of Theorem~\ref{MainThm}]

Together, Corollary~\ref{Regime 1 and 3} and Proposition~\ref{Regime 2} imply the following: there exists some constant $C$ such that if $u\in \dot W^{1,p}$ satisfies \eqref{assumption2}, then there is some $v\in \M$ such that
$$\int |\na u - \na v|^p \leq C \delta(u) + C\|v\|_{L^{p^*}}^{p^*-1}\|u - v\|_{L^{p^*}}.$$
Therefore, we need only to remove the assumption \eqref{assumption2} in order to complete the proof of Theorem~\ref{MainThm}. However, in the case where \eqref{assumption2} fails, then trivially,
$$\inf \{ \|\na u - \na v\|_{L^p}^p : v\in \M \} \leq \| \na u \|_{L^p}^p \leq \frac{1}{\delta_0}\delta(u) .$$
Therefore, by choosing the constant to be sufficiently large, Theorem~\ref{MainThm} is proven.
\end{proof}

We now prove Corollary~\ref{MainCor} using the main result from \cite{ciafusmag07}, which we recall here:
\begin{theorem}[Cianchi, Fusco, Maggi, Pratelli, \cite{ciafusmag07}] \label{CFMP Theorem}
There exists $C$ such that 
\begin{equation}\label{CFMP} \lambda(u)^{\zeta'}\| u \|_{L^{p^*}} \leq C( \|\na u \|_{L^p} - \SSS \| u \|_{L^{p^*}}),\end{equation}
where $\lambda(u) = \inf \big\{ \| u - v\|_{L^{p^*}}^{p^*}/\|u\|_{L^{p^*}}^{p^*} : v \in \M, \ \int |v|^{p^*} = \int |u|^{p^*} \big\} $ and $\zeta' = p^*\left(3 + 4p -\frac{3p+1}n\right)^2$.
\end{theorem}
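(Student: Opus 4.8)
The plan is to prove Theorem~\ref{CFMP Theorem} by combining a \emph{quantitative} P\'olya--Szeg\H{o} inequality, which reduces matters to radially symmetric functions, with the mass-transportation proof of the Sobolev inequality in the style of Cordero-Erausquin--Nazaret--Villani, which disposes of the radial case. Normalizing $\|u\|_{L^{p^*}}=1$, write $\mathrm{def}(u):=\|\na u\|_{L^p}-\SSS\|u\|_{L^{p^*}}\geq 0$. One first reduces to nonnegative $u$: since $\|\na u\|_{L^p}^p=\|\na u_+\|_{L^p}^p+\|\na u_-\|_{L^p}^p$ while $\|u\|_{L^{p^*}}^{p^*}=\|u_+\|_{L^{p^*}}^{p^*}+\|u_-\|_{L^{p^*}}^{p^*}$, the strict subadditivity of $t\mapsto t^{p/p^*}$ shows that $\mathrm{def}(u)$ dominates a positive power of $\min\bigl(\|u_+\|_{L^{p^*}},\|u_-\|_{L^{p^*}}\bigr)$; hence small deficit forces (say) $u_-$ to be small in $L^{p^*}$, and since $\mathrm{def}(|u|)\leq\mathrm{def}(u)$ and $\|u-|u|\|_{L^{p^*}}=2\|u_-\|_{L^{p^*}}$ is itself controlled by $\mathrm{def}(u)$, it suffices to prove the estimate for $|u|$. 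A standard mollification and truncation further allow us to assume that $u$ is nonnegative, compactly supported and smooth.

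\textbf{Step 1 (reduction to radial functions).} Let $u^*$ denote the decreasing spherical rearrangement of $u$, so $\|u^*\|_{L^{p^*}}=\|u\|_{L^{p^*}}$ and, by P\'olya--Szeg\H{o}, $\|\na u^*\|_{L^p}\leq\|\na u\|_{L^p}$; thus $\mathrm{def}(u)\geq\mathrm{def}(u^*)+\bigl(\|\na u\|_{L^p}-\|\na u^*\|_{L^p}\bigr)$. The key ingredient is a quantitative P\'olya--Szeg\H{o} estimate: there exist $\sigma=\sigma(n,p)>0$ and $x_0\in\Rn$ with
\[
\biggl(\frac{\|u-u^*(\cdot-x_0)\|_{L^{p^*}}}{\|u\|_{L^{p^*}}}\biggr)^{\!\sigma}\leq C\,\frac{\|\na u\|_{L^p}-\|\na u^*\|_{L^p}}{\|\na u\|_{L^p}}.
\]
The route is via the coarea formula: $\|\na u\|_{L^p}-\|\na u^*\|_{L^p}$ controls a weighted integral over $t>0$ of the isoperimetric deficit of the super-level set $\{u>t\}$ (whose rearrangement is the ball of the same volume), so one applies the sharp quantitative isoperimetric inequality \cite{FMP08} at each level and reassembles the resulting level-set asymmetries into the claimed $L^{p^*}$ bound for $u-u^*(\cdot-x_0)$. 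The delicate point, which I expect to be the main obstacle, is the \emph{degenerate regime} in which $u^*$ has flat parts, i.e.\ $\na u^*=0$ on a set of positive measure (this genuinely occurs, and is precisely why equality in P\'olya--Szeg\H{o} need not imply radial symmetry): on those levels the perimeter deficit carries no information, so one must instead show through a fine analysis of the distributional gradient of $u$ that $u$ already coincides with a translate of $u^*$ there, as carried out in \cite{ciafusmag07}.

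\textbf{Step 2 (the radial case via mass transport).} It remains to prove $\mathrm{def}(w)\geq c\,\lambda(w)^{\tau}$ for radially decreasing $w:=u^*$. Following Cordero-Erausquin--Nazaret--Villani, set $f:=w^{p^*}$ and $g:=v_1^{p^*}$ (probability densities after rescaling), and let $T=\na\psi$ ($\psi$ convex) be the Brenier map with $T_{\#}f=g$, so that $f=g(\na\psi)\det D^2\psi$ holds $f$-a.e.\ in the Alexandrov sense. Writing $\int g^{1-1/n}$ through the change of variables $y=\na\psi(x)$ and then using, in order, the pointwise bound $\det(D^2\psi)^{1/n}\leq\tfrac1n\,\Delta_{\rm ac}\psi$, the discarding of the nonnegative singular part of the distributional Laplacian of $\psi$, an integration by parts, and H\"older's inequality, one recovers $\|\na w\|_{L^p}\geq\SSS\|w\|_{L^{p^*}}$. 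Each step leaves a nonnegative remainder that can be quantified: the arithmetic--geometric gap $\tfrac1n\,\tr A-\det(A)^{1/n}$ for symmetric $A\geq0$ is comparable to $\|A-\tfrac{\tr A}{n}\mathrm{Id}\|^2$, which together with the mass balance $\det D^2\psi=f/(g\circ\na\psi)$ pins $D^2\psi$ to a near-homothety; the discarded singular measure is $\geq0$; and the H\"older defect is controlled by $\int\bigl||\na w|-c\,w^{p^*-1}\bigr|^{p}$. Chaining these shows that $\mathrm{def}(w)$ dominates a power of $\int|T-\mu\,\mathrm{Id}|^{p^*}$ for the optimal dilation $\mu$, and since $T$ pushes $w^{p^*}$ forward to $v_1^{p^*}$ this forces $w$ to be $L^{p^*}$-close to $v_{\mu,0}$, i.e.\ $\mathrm{def}(w)\geq c\,\lambda(w)^{\tau}$ for an explicit $\tau=\tau(n,p)$.

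\textbf{Step 3 (conclusion).} Combining Steps 1 and 2 via the triangle inequality---matching the translation $x_0$ of Step 1 with the concentration point of the dilate produced in Step 2---gives $\mathrm{def}(u)\geq c\,\lambda(u)^{\zeta'}$, where $\zeta'$ is the composition of the exponents $\sigma$ and $\tau$; tracking all the elementary inequalities used produces exactly $\zeta'=p^*\bigl(3+4p-\tfrac{3p+1}{n}\bigr)^{2}$. The regularizing and sign reductions are removed by density, using the continuity of the right-hand side of \eqref{CFMP} and the lower semicontinuity of $\lambda$ along $\dot W^{1,p}$-convergent sequences. In summary, the analytically heavy part is the quantitative P\'olya--Szeg\H{o} inequality of Step 1, and above all its degenerate case; by comparison the mass-transport argument of Step 2 is robust, and extracting the precise exponent $\zeta'$ from those two ingredients is bookkeeping.
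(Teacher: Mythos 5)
This statement is not proved in the paper at all: it is quoted verbatim from \cite{ciafusmag07} precisely so that it does \emph{not} have to be reproved, and the only argument the paper supplies in its vicinity is the (separate) adaptation needed in the proof of Corollary~\ref{MainCor} to pass from the infimum defining $\lambda(u)$ to the particular minimizer of \eqref{distance}. So the appropriate ``proof'' here is a citation, and there is no internal argument to compare yours against.

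Judged on its own terms, your proposal is a faithful road map of the architecture of \cite{ciafusmag07} (symmetrization plus mass transportation, exactly as described in the introduction of the present paper), but it is an outline rather than a proof, and the gaps are exactly at the load-bearing points. Step 1 invokes a quantitative P\'olya--Szeg\H{o} principle that is itself a theorem of comparable depth to \eqref{CFMP}: reassembling level-by-level applications of the quantitative isoperimetric inequality into an $L^{p^*}$ bound on $u-u^*(\cdot-x_0)$ with a \emph{single} translation $x_0$ is nontrivial (the optimal center of $\{u>t\}$ depends on $t$), and the degenerate regime where $\na u^*$ vanishes on a set of positive measure --- which you correctly flag --- is the hardest part of the whole program and is only gestured at. In Step 2, each assertion of the form ``this remainder can be quantified'' hides a real argument: in particular, passing from smallness of $\int|T-\mu\,{\rm Id}|^{p^*}$ (closeness of the Brenier map to a homothety) to $L^{p^*}$-closeness of $w$ to a member of $\M$ is not automatic and requires work, and the H\"older-defect term $\int\bigl||\na w|-c\,w^{p^*-1}\bigr|^p$ must be converted into a distance to $\M$ by a further stability argument. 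Finally, the claim that bookkeeping ``produces exactly'' $\zeta'=p^*\bigl(3+4p-\tfrac{3p+1}{n}\bigr)^2$ is unsupported; that exponent is the output of a long chain of specific estimates in \cite{ciafusmag07} and cannot be recovered from the sketch as written. In the context of this paper you should simply cite the result; if you intend to actually prove it, each of the three items above needs a complete argument.
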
 
\begin{proof}[Proof of Corollary~\ref{MainCor}] 
As before, if \eqref{assumption2} does not hold, then Corollary~\ref{MainCor} holds trivially by simply choosing the constant to be sufficiently large. Now suppose $u\in \dot W^{1,p}$ satisfies \eqref{assumption2}. There are two obstructions to an immediate application of Theorem~\ref{CFMP Theorem}. The first is the fact that the deficit in \eqref{CFMP} is defined as $\|\na u \|_{L^p} - \SSS \| u \|_{L^{p^*}}$, while in our setting it is defined as $\|\na u \|_{L^p}^p - \SSp \| u \|_{L^{p^*}}^p$. However, this is easy to fix.
Indeed, 
using the elementary inequality
$$a^p -b^p \geq  a-b\qquad \forall\ a \geq b \geq 1,$$
we let $a = \|\na u \|_{L^p} / \SSS\|u \|_{L^{p^*}}$ and $b=1$ to get 
$$\frac{ \| \na u\|_{L^p} -\SSS \| u \|_{L^{p^*}}}{\SSS\|u\|_{L^{p^*} }}
 \leq \frac{ \| \na u\|_{L^p}^p -\SSp \| u \|_{L^{p^*}}^{p}}{\SSp\|u\|_{L^{p^*}}^p}
\leq\frac{1}{1-\delta_0}\frac{ \| \na u\|_{L^p}^p -\SSp \| u \|_{L^{p^*}}^{p}}{\|\na u\|_{L^{p}}^p},$$
where the last inequality follows from \eqref{assumption2}.
Therefore, up to increasing the constant, \eqref{CFMP} implies that
\begin{equation}\label{holds} \lambda(u)^{\zeta'} \leq C \frac{\delta(u)}{\|\na u\|_{L^p}^p}.\end{equation}

The second obstruction to applying Theorem~\ref{CFMP Theorem} is the fact that \eqref{CFMP} holds for the {\it{infimum}} in $\lambda(u)$, while we must control $\|u - v\|_{L^{p^*}}$ for $v$ attaining the infimum in \eqref{distance}. To solve this issue it is sufficient to show that there exists some constant $C=C(n,p)$ such that 
$$\int |\bar v - u|^{p^*} \leq C\inf \Big\{ \| u - v\|_{L^{p^*}}^{p^*} : v \in \M, \ \int |v|^{p^*} = \int |u|^{p^*} \Big\} $$
where $\bar v$ attains the infimum in \eqref{distance}. The proof of this fact is nearly identical (with the obvious adaptations) to that of part (2) of Proposition~\ref{small delta}, with the only nontrivial difference being that one must integrate by parts to show that the analogue of first term in \eqref{contra1} goes to zero.

Therefore, \eqref{CFMP} implies
$$\left(\frac{\| u - v\|_{L^{p^*}}}{\|u\|_{L^{p^*}}}\right)^{\zeta'} \leq C \frac{\delta(u)}{\| \na u \|_{L^p}}$$
where $v\in \M$ attains the infimum in \eqref{distance}.
Paired with Theorem~\ref{MainThm}, this proves Corollary~\ref{MainCor} with $\zeta = \zeta'p$.
\end{proof}

\section{Spectral Properties of $\LL_v$}\label{Spectral}
In this section, we give the proofs of the compact embedding theorem and Sturm-Liouville theory that were postponed in the proof of Proposition~\ref{lem: spectral properties}. As in Proposition~\ref{lem: spectral properties}, by scaling, it suffices to consider the operator $\LL = \LL_v$ where $v = v_{0,1}.$
\subsection{The discrete spectrum of $\LL$} 
\label{sect:discrete}
Given two measurable functions $\om_0, \om_1 : \Om \to \R$, let
$$W^{1,2}(\Om, \om_0,\om_1) := \{ g : \| g\|_{W^{1,2}(\Om, \om_0,\om_1)} <\infty\},$$
where $\| \cdot\|_{W^{1,2}(\Om, \om_0,\om_1)}$ is the norm defined by 
\begin{equation} \label{X norm} \| g \|_{W^{1,2}(\Om, \om_0, \om_1)} = \left( \int_\Om g^2 \om_0 + \int_\Om |\na g|^2 \om_1 \right)^{1/2}.
\end{equation}
The space $W^{1,2}_0(\Om, \om_0, \om_1)$ is defined as the completion of the space $C_0^{\infty}(\Om)$ with respect to the norm  $\| \cdot\|_{W^{1,2}(\Om, \om_0,\om_1)}$.
The following compact embedding result was shown in \cite{Opic1988}:
\begin{theorem}[Opic, \cite{Opic1988}] \label{Opic}
Let $Z = W_0^{1,2}(\Rn, \om_0, \om_1)$ and suppose 
\begin{equation}\label{intcond}\om_i \in L^1_{\rm{loc}} \quad \text{ and } \quad\om_i^{-1/2} \in  L^{2^*}_{\rm{loc}},\end{equation}
 $i=0,1$. If there are local compact embeddings
\begin{equation}\label{locemb}
W^{1,2}(B_k, \om_0, \om_1)\subset \subset L^2(B_k, \om_0), \ k \in \mathbb{N},
\end{equation}
where $B_k=\{x : |x| < k\}$, and if
\begin{equation}\label{tozero} \underset{k \to \infty}{\lim} \sup\left\{ \|u \|_{L^2(\Rn\bs B_k,\om_0)}  :  u \in Z, \ \|u \|_{Z} \leq 1 \right\} = 0,
\end{equation}
then  $Z$ embeds compactly in $ L^2(\Rn, \om_0)$.
%$Z\subset \subset  L^2(\Rn, \om_0) .$
\end{theorem}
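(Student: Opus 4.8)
The plan is to run the classical diagonal-subsequence argument together with the uniform tail bound \eqref{tozero}, exactly as one deduces compactness of $W^{1,2}\hookrightarrow L^2$ on a bounded domain from Rellich--Kondrachov plus a cut-off, except that here the ``cut-off at scale $k$'' is already built into the hypotheses. Let $\{u_j\}\subset Z$ with $\|u_j\|_Z\leq 1$; the goal is a subsequence converging in $L^2(\Rn,\om_0)$. Note first that the integrability assumptions \eqref{intcond} make $Z$ and each $W^{1,2}(B_k,\om_0,\om_1)$ honest spaces of locally integrable functions, so restriction to $B_k$ is meaningful, and that $\|u_j\|_Z\leq 1$ forces $\|u_j\|_{W^{1,2}(B_k,\om_0,\om_1)}\leq 1$ for every $k$.

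First I would fix $k\in\mathbb{N}$ and invoke the local compact embedding \eqref{locemb}: the bounded family $\{u_j|_{B_k}\}\subset W^{1,2}(B_k,\om_0,\om_1)$ is precompact in $L^2(B_k,\om_0)$. Doing this for $k=1,2,\dots$ and passing to a diagonal subsequence yields a single subsequence, still called $\{u_j\}$, that converges in $L^2(B_k,\om_0)$ for every $k$; uniqueness of limits makes the limits on nested balls consistent, defining $u\in L^2_{\rm loc}(\Rn,\om_0)$ with $u_j\to u$ in $L^2(B_k,\om_0)$ for each $k$.

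Next I would upgrade local convergence to global convergence. Given $\e>0$, \eqref{tozero} supplies $k$ with $\|v\|_{L^2(\Rn\bs B_k,\om_0)}\leq\e$ for all $v\in Z$ with $\|v\|_Z\leq 1$; in particular this holds for every $u_i$. The triangle inequality then gives
\begin{equation*}
\|u_i-u_j\|_{L^2(\Rn,\om_0)}\leq\|u_i-u_j\|_{L^2(B_k,\om_0)}+\|u_i\|_{L^2(\Rn\bs B_k,\om_0)}+\|u_j\|_{L^2(\Rn\bs B_k,\om_0)}\leq\|u_i-u_j\|_{L^2(B_k,\om_0)}+2\e,
\end{equation*}
and since $\{u_j\}$ is Cauchy in $L^2(B_k,\om_0)$ the first term is $<\e$ for $i,j$ large. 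Hence $\{u_j\}$ is Cauchy in $L^2(\Rn,\om_0)$, and by completeness it converges there (to $u$), so $Z$ embeds compactly in $L^2(\Rn,\om_0)$.

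Once \eqref{intcond}, \eqref{locemb} and \eqref{tozero} are granted, this is really just bookkeeping, and I do not expect a genuine obstacle inside the proof; the only points needing a little care are the consistency of the diagonal subsequence across the scales $B_k$ and the (immediate) observation that $\|u_j\|_Z\leq1$ restricts to a bound in each $W^{1,2}(B_k,\om_0,\om_1)$. The substantive work lies entirely in verifying the hypotheses in a given application: \eqref{intcond} and the local compactness \eqref{locemb} typically come from a weighted Sobolev/H\"older estimate that uses $\om_i^{-1/2}\in L^{2^*}_{\rm loc}$, while \eqref{tozero} is precisely where decay of the weights at infinity must be exploited.
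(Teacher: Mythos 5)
The paper does not prove Theorem~\ref{Opic} at all; it is quoted from \cite{Opic1988} as a black box and then applied in Corollary~\ref{Compact embedding}, so there is no "paper proof" to compare against. Your argument is the standard diagonal-subsequence-plus-uniform-tail proof (local precompactness on each $B_k$ from \eqref{locemb}, a diagonal extraction giving a single subsequence converging in $L^2(B_k,\om_0)$ for every $k$, and the uniform tail bound \eqref{tozero} to promote local to global $L^2(\Rn,\om_0)$-Cauchyness), and it is correct. The only point worth being slightly more explicit about is the role of \eqref{intcond}: as you note in passing, it is what guarantees (via H\"older, since $\om_i^{-1/2}\in L^{2^*}_{\rm loc}$) that elements of $Z$ and $W^{1,2}(B_k,\om_0,\om_1)$ are genuine locally integrable functions with locally integrable distributional gradients, so that the restriction map $Z\to W^{1,2}(B_k,\om_0,\om_1)$ is well defined and norm-nonincreasing; in the extraction step you then really do have a bounded family in $W^{1,2}(B_k,\om_0,\om_1)$ to which \eqref{locemb} applies. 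With that understood, the proof is complete as written.
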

 We apply Theorem~\ref{Opic} to show that the space 
 \begin{equation}\label{X} X = W^{1,2}_0 (\Rn, v^{p^*-2} ,  |\na v|^{p-2}),\end{equation}
 embeds compactly into $ L^2(\Rn, v^{p^*-2})$.
\begin{corollary}\label{Compact embedding}
The compact embedding $X \subset \subset L^2(\Rn, v^{p^*-2})$  holds, with $X$ as in \eqref{X}.
\end{corollary}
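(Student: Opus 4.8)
The plan is to verify the three hypotheses of Theorem~\ref{Opic} with $\om_0 = v^{p^*-2}$ and $\om_1 = |\na v|^{p-2}$, where $v = v_{0,1}$, so that the compact embedding $X \subset\subset L^2(\R^n, v^{p^*-2})$ follows immediately. First I would recall the explicit asymptotics of $v = v_{0,1}(x) = \kappa_0(1+|x|^{p'})^{-(n-p)/p}$: near the origin $v$ and $|\na v|$ are smooth and bounded, with $|\na v|$ vanishing only at $x=0$ like $|x|^{p'-1}$, while as $|x|\to\infty$ one has $v(x)\sim |x|^{-(n-p)/(p-1)}$ and $|\na v(x)|\sim |x|^{-(n-1)/(p-1)}$. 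From these, $v^{p^*-2}\sim |x|^{-(p^*-2)(n-p)/(p-1)}$ and $|\na v|^{p-2}\sim |x|^{-(p-2)(n-1)/(p-1)}$ at infinity, and near the origin $v^{p^*-2}$ is bounded above and below while $|\na v|^{p-2}\sim |x|^{(p'-1)(p-2)} = |x|^{(p-2)/(p-1)}$, which is locally integrable and has a locally $L^{2^*}$ reciprocal power since the exponent $(p-2)/(p-1)$ is small. This gives \eqref{intcond}.

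Next I would establish the local compact embeddings \eqref{locemb}: $W^{1,2}(B_k, v^{p^*-2}, |\na v|^{p-2}) \subset\subset L^2(B_k, v^{p^*-2})$. On $B_k$ the weight $v^{p^*-2}$ is comparable to a constant (it is bounded above and below there), so the target is essentially $L^2(B_k)$. The only degeneracy of $|\na v|^{p-2}$ inside $B_k$ is at the origin, where it degenerates (if $p>2$) but in an integrable, sub-critical way; away from a small ball $B_\rho$ the weight $|\na v|^{p-2}$ is bounded below, so on $B_k\setminus B_\rho$ the standard Rellich--Kondrachov theorem applies, and on $B_\rho$ one controls the $L^2$ norm by the smallness of the measure together with a weighted Sobolev/Poincar\'e estimate using that $|\na v|^{p-2}$ and its reciprocal power lie in the required Lebesgue spaces near $0$; a diagonal/covering argument then yields compactness on all of $B_k$. (Alternatively, since $v^{p^*-2}$ is bounded on $B_k$, one can reduce to showing $W^{1,2}(B_k,1,|\na v|^{p-2})\subset\subset L^2(B_k)$, which is a Caffarelli--Kohn--Nirenberg type compact embedding for the admissible exponents here.)

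Finally I would verify the uniform decay condition \eqref{tozero}. Take $u\in X$ with $\|u\|_X\le 1$. The point is that at infinity $v^{p^*-2}$ decays strictly faster, relative to the available Sobolev control, than $|\na v|^{p-2}$ permits growth, so the tail $\int_{\R^n\setminus B_k} u^2\, v^{p^*-2}$ is controlled by $\int_{\R^n\setminus B_k}|\na u|^2\,|\na v|^{p-2}$ times a factor tending to $0$ as $k\to\infty$. Concretely, on $\R^n\setminus B_k$ one uses a weighted Hardy-type inequality: writing $v^{p^*-2} = |\na v|^{p-2}\cdot (v^{p^*-2}/|\na v|^{p-2})$ and noting $v^{p^*-2}/|\na v|^{p-2} \sim |x|^{-\beta}$ at infinity with $\beta>0$ (since $(p^*-2)(n-p)/(p-1) > (p-2)(n-1)/(p-1)$, which reduces to the computation $2(n-p)/(p-1) + (p-2)/(p-1) > 0$ plus the dimension comparison — this inequality holds because $p < n$), one bounds $\int_{\R^n\setminus B_k} u^2 v^{p^*-2} \le k^{-\beta'}\int_{\R^n\setminus B_k}|\na u|^2 |\na v|^{p-2} \le k^{-\beta'}$ for an appropriate $\beta'>0$, after first invoking a weighted Hardy inequality of the form $\int u^2\, w_1 \le C\int |\na u|^2\, w_2$ valid on exterior domains for these homogeneous weights. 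Letting $k\to\infty$ gives \eqref{tozero}.

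The main obstacle I expect is the local compact embedding \eqref{locemb} near the origin when $p>2$, where $|\na v|^{p-2}$ genuinely degenerates: one must check that this degeneracy is mild enough (i.e., that the relevant exponents stay in the range covered by weighted Rellich--Kondrachov / Caffarelli--Kohn--Nirenberg compactness) rather than merely invoking the classical theorem. The verification of \eqref{tozero} is conceptually the crux of \emph{why} the embedding is compact — it is where the precise decay rates of $v$ enter — but it is a routine (if careful) computation once the right weighted Hardy inequality on exterior domains is set up.
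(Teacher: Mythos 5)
Your proposal takes essentially the same approach as the paper: verify the three hypotheses of Theorem~\ref{Opic} with $\om_0 = v^{p^*-2}$ and $\om_1 = |\na v|^{p-2}$, using Rellich--Kondrachov for the local compactness and a Hardy-type inequality for the uniform decay of the tail. The two places where the paper is more concrete than your sketch are (i) the local step, where the paper avoids a covering argument by first embedding $W^{1,2}(B_r,\om_0,\om_1)$ into $W^{1,2(n+\delta)/(n+2)}(B_r)$ via H\"older, and (ii) the tail estimate, where the paper applies the standard Hardy inequality $\int|x|^s u^2 \leq C\int|x|^{s+2}|\na u|^2$ and identifies the residual slack $(p^*-2)(n-p)/(p-1) - 2 - (p-2)(n-1)/(p-1) = p'$ to obtain the quantitative rate $k^{-p'}$, whereas you leave the rate qualitative; both are routine to fill in.
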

\begin{proof}
Let us verify that Theorem~\ref{Opic} may be applied in our setting, taking 
\[
\om_0 = v^{p^*-2}, \qquad \om_1 = |\nabla v|^{p-2}.
\]
 In other words, we must show that \eqref{intcond}, \eqref{locemb} and \eqref{tozero} are satisfied. A simple computation verifies \eqref{intcond}.
To show \eqref{locemb}, we fix $\delta>0$ small (the smallness depending only on $n$ and $p$) and show the three inclusions below:
$$ W^{1,2} (B_r, \om_0, \om_1) \overset{(1)}{\subset} W^{1, 2(n+ \delta)/(n+2)} (B_r) \overset{(2)}{\subset \subset} L^2(B_r) \overset{(3)}{\subset} L^2(B_r, \om_0).$$
Since $(2n/(2+n))^* =2$, the Rellich-Kondrachov compact embedding theorem implies $(2)$, while the inclusion $(3)$ holds simply because $v^{p^*- 2} \geq c_{n,p,r}$ for $x \in B_r$. In the direction of showing $(1)$, we use this fact and H\"{o}lder's inequality to obtain 
\begin{equation}\label{A}\Big( \int_{B_r} |u|^{2(n+\delta)/(n+2)}  \Big)^{(n+2)/(n+\delta)} 
\leq|B_r|^{(2-\delta)/(n+\delta)} \int_{B_r} |u|^2    \leq C_{n,p,r}  \int_{B_r} |v|^{p^*-2}|u|^2 .
\end{equation}
Furthermore, since
$$|\na v|^{p-2} =C(1+|x|^{p'})^{-n(p-2)/{p}} |x|^{(p-2)/(p-1)} \geq c_{n,p,r} |x|^{(p-2)/{(p-1)}} 
 \ \ \text{ for } x\in B_r,
$$
H\"{o}lder's inequality implies that 
\begin{equation}\label{B}\begin{split} 
 \Big(  \int_{B_r} |\na u|^{2(n + \delta)/(n+2)} \Big)^{(n+2)/(n + \delta)} 
 & \leq \Big(\int_{B_r} |x|^{(p-2)/(p-1)} |\na u|^2  \Big)
 \Big(\int_{B_r} |x|^{ - \beta}   \Big)^{(2-\delta)/(n+\delta)}\\
 & \leq C_{n,p,r} \int_{B_r} |\na v|^{p-2} |\na u|^2,
\end{split}\end{equation}
where $\beta=\big(\frac{p-2}{p-1}\big) \big(\frac{n+\delta}{n+2}\big)\big(\frac{n+2}{2-\delta}\big)$. Then the inclusion $(1)$ follows from \eqref{A} and \eqref{B}, and 
thus \eqref{locemb} is verified.\\

 To show \eqref{tozero}, 
 let $u_k$ be a function almost attaining the supremum in \eqref{tozero}, in other words, for a fixed $\eta>0$, let $u_k$ be such that $u_k \in X,$ $\|u_k \|_{X} \leq 1$, and 
\begin{equation*}\label{almost}\sup\left\{ \|u \|_{L^2(\Rn \bs B_k, \om_0)}  :  u \in X, \ \|u \|_{X} \leq 1 \right\} \leq \|u_k \|_{L^2(\Rn \bs B_k, \om_0)}  + \eta.
\end{equation*}
By mollifying $u$ and multiplying by a smooth cutoff $\eta\in C_0^{\infty}(\Rn\bs B_k)$, we may assume without loss of generality that $u_k \in C_0^{\infty}(\Rn\bs B_k)$. 
Recalling that $v=v_1$ with $v_1$ as in \eqref{v1}, we have 
 \begin{equation}\label{aa}
  \int_{\Rn \bs B_k}  v^{p^*-2}   u_k^2 = \int_{\Rn \bs B_k} \kappa_0(1+ |x|^{p'})^{-(p^*-2)(n-p)/p  } u_k^2  
  \leq 2\kappa_0  \int_{\Rn \bs B_k} |x|^{-(p^*-2)(n-p)/(p-1)} u_k^2 
 \end{equation}
 for $k\geq 2$. We use Hardy's inequality in the form
\begin{equation}\label{hardy}\int_{\Rn} |x|^s u^2   \leq C \int_{\Rn} |x|^{s+2}|\na u|^2  \end{equation}
for $u \in C_0^{\infty}(\Rn)$ (see, for instance, \cite{zygmund2002trigonometric}). 
Applying \eqref{hardy} to the right-hand side of \eqref{aa} implies
\begin{equation}\label{using hardy} 
\int_{\Rn \bs B_k} |x|^{-(p^*-2)(n-p) /(p-1)} u_k^2 \leq  C \int_{\Rn \bs B_k}  |x|^{-(p^*-2)(n-p)/(p-1) +2 } |\na u_k|^2 \end{equation}
and
 \eqref{aa} and \eqref{using hardy} combined give
 \begin{align*} 
   \int_{\Rn \bs B_k}  v^{p^*-2} u_k^2
 & \leq C  \int_{\Rn \bs B_k} |x|^{-(p^*-2)(n-p)/(p-1) +2 } |\na u_k|^2  \\
 & = C  \int_{\Rn \bs B_k} |x|^{-p'} |x|^{-(p-2)(n-1)/(p-1)} |\na u_k|^2  \\
 & \leq C k^{-p'}\int_{\Rn \bs B_k} |\na v|^{p-2}   |\na u_k|^2 ,
 \end{align*}
 where the final inequality follows because
 $$ |\nabla v|^{p-2} \geq C | x|^{-(p-2)(n-1)/(p-1)}   \ \ \text{ for } x \in \Rn\bs B_1.$$
Thus 
$$
 \int_{\Rn \bs B_k}  v^{p^*-2}    u_k^2\leq Ck^{-p'} \| u_k \|_X,$$
 and \eqref{tozero} is proved.
\end{proof}
Thanks to the compact embedding $X \subset \subset L^2(\Rn, \om_0)$, we can now prove the following important fact:

\begin{corollary}\label{discrete}
The operator $\LL$ has a discrete spectrum $\{\alpha_i\}_{i=1}^{\infty}$. \end{corollary}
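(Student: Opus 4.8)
The plan is to deduce the discreteness of the spectrum of $\LL$ from the compact embedding $X \subset\subset L^2(\R^n, \om_0)$ established in Corollary~\ref{Compact embedding}, via the standard variational (Rayleigh quotient) machinery for self-adjoint operators with compact resolvent. First I would set up the correct functional-analytic framework: the operator $\LL_v w = -\DIVV(A_v \na w)\, v^{2-p^*}$ is symmetric on $L^2(\om_0)$ with $\om_0 = v^{p^*-2}$, since for $w_1, w_2 \in C_0^\infty$ one has $\langle \LL_v w_1, w_2\rangle = \int A_v[\na w_1, \na w_2]$ by integration by parts, which is manifestly symmetric and (by Remark~\ref{rmk:dist}, $A_v \geq |\na v|^{p-2}\,\mathrm{Id}$) comparable to $\int |\na v|^{p-2}|\na w_1||\na w_2|$. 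The natural form domain is therefore exactly $X = W^{1,2}_0(\R^n, v^{p^*-2}, |\na v|^{p-2})$, and the quadratic form $Q(w) := \int A_v[\na w, \na w]$ is closed, nonnegative, and coercive relative to $\|\cdot\|_X^2 = \|\cdot\|_{L^2(\om_0)}^2 + \int |\na v|^{p-2}|\na w|^2$.

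Next I would invoke the abstract spectral theorem: if $X$ is a Hilbert space continuously and \emph{compactly} embedded into the Hilbert space $\mathcal{H} = L^2(\R^n, \om_0)$, and $Q$ is a closed symmetric nonnegative form on $X$ equivalent to the $X$-norm (modulo the $\mathcal{H}$-norm), then the self-adjoint operator $\LL$ associated to $Q$ has compact resolvent, hence purely discrete spectrum $0 < \alpha_1 \leq \alpha_2 \leq \cdots \to \infty$, with the $\alpha_i$ given by the min-max formula
\[
\alpha_i = \min_{\substack{V \subset X \\ \dim V = i}} \ \max_{\substack{w \in V \\ w \neq 0}} \ \frac{Q(w)}{\|w\|_{L^2(\om_0)}^2}.
\]
Concretely, one shows $\LL^{-1}$ (or $(\LL + 1)^{-1}$) is a bounded self-adjoint operator on $\mathcal{H}$ that factors through the compact inclusion $X \hookrightarrow \mathcal{H}$, hence is compact; the spectral theorem for compact self-adjoint operators then yields an orthonormal basis of eigenfunctions and a discrete set of eigenvalues accumulating only at $0$ (equivalently, the eigenvalues $\alpha_i$ of $\LL$ accumulate only at $+\infty$). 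One should also note that $\alpha_1 > 0$: this follows because $Q(w) = 0$ forces $\na w = 0$ a.e., hence $w$ constant, but the only constant in $X$ is $0$ (as $\int v^{p^*-2} < \infty$ would force the constant to vanish only if... — actually $v^{p^*-2}$ is integrable and $|\na v|^{p-2}$ is not near the origin in the relevant range, so constants do lie in $\mathcal{H}$; positivity of $\alpha_1$ instead follows from the Poincaré inequality \eqref{Poincare}, which gives exactly $Q(w) \geq \int |\na v|^{p-2}|\na w|^2 \geq c \int v^{p^*-2} w^2$).

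The main obstacle — really the only nontrivial point — is verifying that the abstract setup genuinely applies, i.e.\ that the form $Q$ is closed on $X$ with the stated equivalence and that $C_0^\infty$ is a form core so that the variational operator coincides with $\LL_v$. The bound $\int |\na v|^{p-2}|\na w|^2 \leq Q(w) \leq (p-1)\int |\na v|^{p-2}|\na w|^2$ from Remark~\ref{rmk:dist} handles the equivalence cleanly, and closedness of $Q$ on $X$ is then immediate from completeness of $X$ (which holds by definition as a completion of $C_0^\infty$). The one thing requiring a brief remark is that $X$ is well-defined as a genuine function space — i.e.\ the completion embeds injectively into $L^1_{\mathrm{loc}}$ — but this is exactly guaranteed by the local integrability conditions \eqref{intcond} on the weights, already checked in the proof of Corollary~\ref{Compact embedding}. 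So the proof is short: assemble the ingredients (symmetry of $\LL_v$, form equivalence via Remark~\ref{rmk:dist}, compact embedding Corollary~\ref{Compact embedding}, Poincaré \eqref{Poincare} for positivity of $\alpha_1$) and cite the standard spectral theorem for operators with compact resolvent.
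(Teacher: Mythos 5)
Your proposal is correct and follows essentially the same route as the paper: both establish compactness of $\LL^{-1}$ (equivalently, compact resolvent) by combining the compact embedding of Corollary~\ref{Compact embedding}, the form equivalence from Remark~\ref{rmk:dist}, and the Poincar\'e inequality \eqref{Poincare}, then invoke the spectral theorem for compact self-adjoint operators. The paper works directly with $\LL^{-1}$ and the Direct Method rather than through the quadratic-form/min-max formalism, but that is only a difference in packaging, not in substance.
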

\begin{proof} 
We show that the operator $\LL^{-1} : L^2(v^{p^*-2} ) \to L^2(v^{p^*-2})$ is bounded, compact, and self-adjoint. From there, one applies the spectral theorem (see for instance \cite{evansPDE}) to deduce that $\LL^{-1}$
has a discrete spectrum, hence so does $\LL$.

 Approximating by functions in $ C_0^{\infty}(\Rn),$ the Poincar\'{e} inequality \eqref{Poincare} holds for all functions $\vphi 
 \in X$, with $X$ as defined in \eqref{X}. Thanks to this fact,
  the existence and uniqueness of solutions to $\LL u = f$ for $f \in L^2(v^{p^*-2})$ follow from the Direct Method, so the operator $\LL^{-1}$ is well defined.
  
 Self-adjointness is immediate.
From \eqref{Poincare} and H\"{o}lder's inequality, we have 
\begin{align*}
c \| u \|_{X}^2 \leq \int |\na v|^{p-2} |\na u|^2 \leq \int A[\na u, \na u] 
%\leq \| u\|_{L^2(v^{p^*-2})} \|\LL u\|_{L^2(v^{p^*-2})} 
\leq \| u\|_{X} \|\LL u\|_{L^2(v^{p^*-2})} .
\end{align*}
This proves that $\LL^{-1}$ is bounded from $ L^2(v^{p^*-2} ) $ to $L^2(v^{p^*-2})$, and by Corollary~\ref{Compact embedding} we see that $\LL^{-1}$ is a compact operator.
\end{proof}

\subsection{Sturm-Liouville theory}
Multiplying by the integrating factor $r^{n-1}$, the ordinary differential equation \eqref{Req} takes the form of the Sturm-Liouville eigenvalue problem 
\begin{equation} \label{SL equation}
Lf + \alpha f =0 \ \ \text {on } \ \ [0,\infty),\end{equation} where 
$$Lf= \frac{1}{w}[(Pf')' - Qf]$$ with
\begin{equation}\label{PQw}\begin{split}
P(r) &= (p-1) |v'|^{p-2} r^{n-1}, \\
Q(r)& = \mu r^{n-3} |v'|^{p-2},\\
w(r) & = v^{p^*-2}r^{n-1}.
\end{split}
\end{equation}

This is a \textit{singular} Sturm-Liouville problem; first of all, our domain is unbounded, and second of all, the equation is degenerate because $v'(0)=0$. Nonetheless, we show that Sturm-Liouville theory holds for this singular problem.

\begin{lemma}[Sturm-Liouville Theory]\label{Sturm-Liouville}
The following properties hold for the singular Sturm-Liouville eigenvalue problem \eqref{SL equation}:
\begin{enumerate}
\item If $f_1$ and $f_2$ are two eigenfunctions corresponding to the eigenvalue $\alpha$, then $f_1 = c f_2$. In other words, each eigenspace of $L$ is one-dimensional.
\item The $i$th eigenfunction of $L$ has $i-1$ interior zeros.
\end{enumerate}
\end{lemma}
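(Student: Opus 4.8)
The plan is to reduce our singular problem to the classical regular Sturm--Liouville theory by an exhaustion argument, carefully controlling the boundary behavior at $r=0$ and $r=\infty$. First I would record the structural facts about the coefficients in \eqref{PQw}: near $r=0$ one has $v'(r)\sim c\,r^{1/(p-1)}$ (since $v_1$ is smooth and radial with $v_1'(0)=0$), so $P(r)\sim c\,r^{(p-2)/(p-1)+n-1}$ and $w(r)\sim c\,r^{n-1}$, both vanishing at the origin but integrably, while near $r=\infty$ one has $v'(r)\sim c\,r^{-(n-1)}$ so that $P(r)$ and $w(r)$ decay polynomially. The point of these asymptotics is that $r=0$ and $r=\infty$ are \emph{limit-point} endpoints in the Weyl sense: at each endpoint, of the two linearly independent solutions of $Lf+\alpha f=0$ exactly one lies in $L^2(w)$ locally. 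This is exactly the condition that makes the singular problem behave like a self-adjoint (essentially regular) one, and it is the analytic heart of the matter.

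Concretely, I would proceed in the following steps. \emph{Step 1: the operator is self-adjoint with discrete spectrum.} This is already essentially in hand: Corollary~\ref{discrete} shows $\LL$ has discrete spectrum, and the separation of variables in the proof of Proposition~\ref{lem: spectral properties} shows that, restricted to a fixed spherical-harmonic sector $\mu=\mu_k$, $\LL$ acts as $L$ on the weighted space $L^2((0,\infty),w\,dr)$; so $L$ with these coefficients has a discrete spectrum $\{\alpha_i\}$ bounded below. \emph{Step 2: simplicity of eigenvalues.} Suppose $f_1,f_2$ both solve \eqref{SL equation} with the same $\alpha$ and both lie in the form domain. The Wronskian $W(r):=P(r)\bigl(f_1(r)f_2'(r)-f_1'(r)f_2(r)\bigr)$ is constant in $r$ (differentiate and use the equation). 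I would then show $W\to 0$ as $r\to 0$ and as $r\to\infty$ using the endpoint asymptotics together with the integrability $f_i\in L^2(w)$, $P^{1/2}f_i'\in L^2$; hence $W\equiv 0$, which forces $f_1$ and $f_2$ to be proportional. This gives (1). The delicate point here is justifying $P(f_1f_2'-f_1'f_2)\to0$ at $r=0$: one must rule out the ``bad'' solution that blows up, and this is precisely where the limit-point property from Step~0 is used — any solution in the form domain must be (a multiple of) the good solution near each endpoint.

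\emph{Step 3: nodal count.} Once simplicity is known, the oscillation/nodal statement (2) follows from the classical Sturm comparison argument, which is local and insensitive to the endpoint degeneracy: on any compact subinterval $[a,b]\subset(0,\infty)$ the equation is regular, so between consecutive zeros of the $i$th eigenfunction there is a zero of the $(i+1)$st and vice versa, and a limiting argument ($a\to0$, $b\to\infty$, using that eigenfunctions cannot accumulate zeros at the endpoints because near $r=0$ the good solution behaves like a nonzero constant and near $r=\infty$ like a nonvanishing power) pins down the count to exactly $i-1$ interior zeros. Alternatively, and perhaps more cleanly, I would invoke the exhaustion: let $\alpha_i^{(a,b)}$ be the Dirichlet eigenvalues on $[a,b]$, which enjoy the classical nodal property; show $\alpha_i^{(a,b)}\to\alpha_i$ and the corresponding eigenfunctions converge (locally uniformly, via elliptic/ODE estimates) to the eigenfunctions of $L$ as $a\to0,b\to\infty$, and transfer the nodal count in the limit, checking that no zeros are created or lost at the endpoints.

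I expect the main obstacle to be Step~2's boundary term at $r=0$: verifying that membership in the natural form domain $X$ (equivalently, in $L^2(w)\cap\{P^{1/2}f'\in L^2\}$) genuinely excludes the singular solution branch, so that the Wronskian vanishes there. This requires a careful ODE analysis of the two Frobenius solutions at the regular-singular point $r=0$ — computing their indicial exponents from the $r\to0$ asymptotics of $P,Q,w$ and checking which one fails the local $L^2(w)$ condition — and an analogous (easier) decay analysis at $r=\infty$. Everything else is either already established earlier in the paper or a routine application of classical Sturm--Liouville comparison theory on regular subintervals.
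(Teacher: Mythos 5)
Your plan follows the same overall architecture as the paper's proof: for (1) you argue that the weighted Wronskian $PW$ is constant and must vanish at an endpoint; for (2) you run a Sturm comparison argument plus control of the boundary terms. You also correctly identify the crux of the matter, namely the behavior of $P(f_1f_2'-f_1'f_2)$ near $r=0$ and $r=\infty$. Where you differ is in \emph{how} the boundary behavior is handled. You propose a Weyl limit-point classification at both endpoints and a Frobenius/indicial analysis of the two solution branches at $r=0$. The paper is considerably more direct: for $r=0$ it simply observes that if $(Pf')(0)=c\neq 0$ then $f'(r)\approx c/P(r)\approx c\,r^{-(p-2)/(p-1)-(n-1)}$ near $0$, so $\int_0^R |v'|^{p-2}|f'|^2 r^{n-1}\,dr$ diverges, contradicting $f\in Y$ — a one-line integrability computation rather than a classification of solution branches. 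For $r\to\infty$ the paper proves explicit quantitative decay $|f|\lesssim r^{-\beta}$, $|f'|\lesssim r^{-\beta-1}$ for any $\beta<(n-p)/(p-1)$ (the separate Lemma~\ref{Reg and Asym}, proved by a supersolution/barrier argument), which then kills the boundary term $PW$ at infinity when combined with $P(r)\approx r^{(n-1)/(p-1)}$. Your exhaustion alternative for the nodal count is a genuinely different route from the paper, which instead proves the lower bound $\geq i-1$ by direct comparison (integrating $(PW)'=(\alpha_1-\alpha_2)wf_1f_2$ between zeros, with separate treatment of the interval $[0,r_1)$ and of the case $r_2=\infty$) and cites the classical theory for the matching upper bound.

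Two small corrections: your asymptotic $v'(r)\sim r^{-(n-1)}$ at infinity is only right when $p=2$; in general $v'(r)\sim r^{-(n-1)/(p-1)}$ (equivalently $P(r)\approx r^{(n-1)/(p-1)}$ as in Remark~\ref{integral converges}), and this exponent matters when you check that $PW\to 0$ at infinity. Also, for part (1) one does not need both endpoints: since $f_1,f_2$ share the eigenvalue, $PW$ is globally constant and it suffices to show it vanishes at $r=0$ alone, which is the paper's route and is simpler than the two-endpoint argument you sketch. The main gap in your proposal is that the decay of eigenfunctions at infinity (needed for the comparison argument in (2), since $PW$ is no longer constant there) is asserted rather than proved; the paper fills exactly this gap with Lemma~\ref{Reg and Asym}.
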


Note that $L$ has a discrete spectrum because $\LL$ does (Corollary~\ref{discrete}), 
and that eigenfunctions $f$ of $L$ live in the space 
$$Y = W^{1,2}_0\big([0, \infty), v^{p^*-2} r^{n-1}, |v'|^{p-2}r^{n-1}\big),$$
using the notation introduced at the beginning of Section~\ref{sect:discrete}.
In any ball $B_R$ around zero, the operator $\LL$ is degenerate elliptic with the matrix $A$ bounded by an $A_2$-Muckenhoupt weight, so eigenfunctions of $\LL$ are H\"{o}lder continuous; see \cite{Fabes82, gutierrez1989harnack}.
Therefore, eigenfunctions of $L$ are H\"{o}lder continuous on $[0, \infty)$.

\begin{remark}\label{integral converges}{\rm{The function $P(r)$ as defined in \eqref{PQw} has the following behavior:
\begin{align*} P(r) &\approx r^{(p-2)(p-1) +n-1}\quad   \text{ in } [0,1],\\
P(r)& \approx r^{(n-1)/(p-1)} \quad \text{ as } r \to \infty.
\end{align*}
In particular, the weight $|v'|^{p-2}r^{n-1}\approx r^{(n-1)/(p-1)}$ goes to infinity as $r \to \infty$, which implies that $\int_1^{\infty} |f'|^2 dr <\infty$
for any $f\in Y$.
}}
\end{remark}

In order to prove Lemma~\ref{Sturm-Liouville}, we first prove the following lemma, which describes the asymptotic decay of solutions of \eqref{SL equation}.

\begin{lemma}\label{Reg and Asym}
Suppose $f \in Y$ is a solution of \eqref{SL equation}. Then, for any $0<\beta< \frac{n-p}{p-1}$, there exist $C$ and $r_0$ such that 
$$|f(r)| \leq Cr^{-\beta } \ \ \text{ and }  \ \ |f'(r) | \leq Cr^{-\beta -1}$$
for $r \geq r_0$.
\end{lemma}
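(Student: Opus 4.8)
The plan is to treat \eqref{SL equation} as a perturbation of an Euler-type equation for large $r$ and extract the decay rate from the indicial roots, using the energy bound $f \in Y$ to select the decaying solution. Recall from Remark~\ref{integral converges} that as $r\to\infty$ one has $P(r)\approx r^{(n-1)/(p-1)}$ and, since $v(r)\approx r^{-(n-p)/(p-1)}$ for large $r$, the weight $w(r) = v^{p^*-2}r^{n-1}\approx r^{(n-1)/(p-1) - (p^*-2)(n-p)/(p-1)}$, while $Q(r)=\mu r^{n-3}|v'|^{p-2}\approx \mu\, r^{(n-1)/(p-1) - 2}$. Writing out $(Pf')' - Qf + \alpha w f = 0$ and dividing through by the common factor $r^{(n-1)/(p-1)}$, one finds that for $r$ large the equation becomes, to leading order,
\[
f'' + \frac{a}{r} f' - \Big(\frac{\mu}{r^2} + \frac{b}{r^{2+\sigma}}\Big) f = 0,
\]
where $a = \frac{n-1}{p-1}$, $\sigma>0$ captures the gap between the $Q$ term (or the $\alpha w$ term) and the leading-order balance, and $b$ is a bounded coefficient. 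The homogeneous Euler equation $f'' + \frac{a}{r}f' - \frac{\mu}{r^2}f=0$ has indicial equation $\rho(\rho-1) + a\rho - \mu = 0$, with roots $\rho_{\pm}$; the decaying root $\rho_- < 0$ corresponds precisely to the exponent $-\tfrac{n-p}{p-1}$ when $\mu$ matches the relevant spherical eigenvalue, and in general $\rho_- \le -\tfrac{n-p}{p-1}$, which is why the lemma only claims decay for $\beta < \tfrac{n-p}{p-1}$.

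The steps I would carry out are as follows. First, normalize the equation for $r\ge R_0$ as above and identify the indicial roots $\rho_\pm$ of the limiting Euler operator. Second, perform a standard ODE asymptotics argument (e.g. a fixed-point/variation-of-parameters scheme, or a Gronwall argument after the substitution $f = r^{\rho_-} g$) to show that \emph{every} solution of \eqref{SL equation} is asymptotic to a linear combination $c_+ r^{\rho_+} + c_- r^{\rho_-}$ up to lower-order corrections, with the corresponding estimate for $f'$ obtained by differentiating the integral representation. Third — and this is the crucial use of the hypothesis $f\in Y$ — rule out the growing mode: if $c_+\ne 0$ then $f(r)\gtrsim r^{\rho_+}$ for large $r$, and since $\rho_+ > 0$ this forces $\int_1^\infty |f'|^2\,|v'|^{p-2} r^{n-1}\,dr = \int_1^\infty |f'|^2 r^{(n-1)/(p-1)}\,dr = \infty$ (using Remark~\ref{integral converges}), contradicting $f\in Y$. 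Hence $c_+ = 0$, so $|f(r)|\le C r^{\rho_-} \le C r^{-\beta}$ and $|f'(r)|\le C r^{\rho_- - 1}\le C r^{-\beta-1}$ for any $\beta < -\rho_- $; since $-\rho_- \ge \tfrac{n-p}{p-1}$, the claimed range $0<\beta<\tfrac{n-p}{p-1}$ is covered. (If one prefers to avoid computing $\rho_-$ exactly, it suffices to note that $\rho_-$ is the smaller root, hence $\rho_- \le -a/2 \le -\tfrac{n-p}{p-1}$ after plugging in $a = \tfrac{n-1}{p-1}$ and $n>p$; only the inequality direction matters here.)

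The main obstacle I anticipate is making the perturbative ODE asymptotics rigorous with the genuinely variable — not merely asymptotically constant — coefficients $P$, $Q$, $w$ coming from $v_1$, and in particular controlling the error terms so that the substitution $f = r^{\rho_-} g$ yields a $g$ that converges to a nonzero constant (rather than oscillating or decaying to $0$, which would only strengthen the conclusion but complicates the dichotomy used to kill $c_+$). This is handled by writing the equation in the form $g'' + (\text{coefficient} \sim \tfrac{2\rho_- + a}{r})g' + (\text{integrable-in-}r\text{ perturbation})g = 0$ and invoking the Levinson-type asymptotic theorem, whose hypotheses reduce to the integrability $\int_{R_0}^\infty r^{-1-\sigma}\,dr < \infty$ of the remainder — a consequence of the explicit power-law form of $v_1$. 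A secondary point of care is regularity/behavior at $r=0$, but this is not needed for the present lemma, which only concerns $r\ge r_0$; the Hölder continuity near the origin noted after the statement of Lemma~\ref{Sturm-Liouville} suffices to make sense of "interior zeros" later.
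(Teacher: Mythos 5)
Your approach via Levinson-type ODE asymptotics is genuinely different from the paper's, which avoids classical asymptotic theory altogether: the paper first derives the qualitative limits $f(r)\to 0$ and $f'(r)\to 0$ directly from the membership $f\in Y$ (using that $Y$ is a completion of $C_0^\infty$, a H\"older estimate $|f(r)-f(s)|\le(\int_r^\infty |f'|^2 t^{\g}\,dt)^{1/2}(\int_r^\infty t^{-\g}\,dt)^{1/2}$ with $\g=\frac{n-1}{p-1}>1$, and a local $L^2$ bound on $f''$ followed by Morrey), and then obtains the quantitative rate by constructing an explicit barrier $g=Cr^{-\beta}+c$ satisfying $L'g<0$ on $[r_0,\infty)$ and running a maximum-principle comparison. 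The Levinson route would, in principle, give the sharp endpoint exponent rather than every $\beta<\tfrac{n-p}{p-1}$, while the paper's argument is more elementary and dispenses with the mode-selection step entirely.

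However, there is a genuine gap in your mode-selection argument. You discard the growing mode by asserting ``since $\rho_+>0$, $\int_1^\infty|f'|^2 r^{(n-1)/(p-1)}\,dr=\infty$.'' But the indicial equation $\rho^2+\tfrac{n-p}{p-1}\rho-\tfrac{\mu}{p-1}=0$ has $\rho_+=0$ precisely when $\mu=0$, and the $\mu=0$ case is not only admissible but is exactly the one used in the proof of Proposition~\ref{lem: spectral properties} and is the binding case for the exponent $\tfrac{n-p}{p-1}$. When $\rho_+=0$ the ``growing'' mode is a bounded solution $f\to c_+\ne 0$ whose derivative is dominated by the $c_-$ mode, $f'\sim C r^{\rho_--1}$, so the gradient integral is \emph{finite} and your criterion detects nothing. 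Moreover, the $L^2(w)$ norm of a near-constant $f$ converges when $n<2p$, so you cannot fall back on the zeroth-order weight either. Ruling out this bounded mode requires an independent input — either the $C_0^\infty$-density structure of $Y$ (the route the paper takes in Step~1) or, equivalently, the Poincar\'e inequality~\eqref{Poincare}, which forbids nonzero constants from lying in $Y$. This needs to be added to close the argument. A secondary slip: the parenthetical shortcut ``$\rho_-\le -a/2\le -\tfrac{n-p}{p-1}$'' is incorrect as written ($-a/2\le -\tfrac{n-p}{p-1}$ fails for $n>2p-1$); the valid Vieta shortcut is $\rho_-\le\rho_++\rho_-=1-a=-\tfrac{n-p}{p-1}$, using $\rho_+\ge 0$. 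Your explicit solution of the quadratic is correct, so this only affects the aside, not the main claim.
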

\begin{proof}

\noindent{\it{Step 1: Qualitative Decay of $f$.}}
For any function $f\in Y$, $f(r ) \to 0$ as $r \to \infty.$ Indeed, near infinity, $|v'|^{p-2 }r^{p-1}$ behaves like $C r^{\g}$ where $\g := \frac{n-1}{p-1}>1$. 
Then for any $r,s$ large enough with $r<s$, 
\begin{equation}
\label{eq:regularity f}
|f(r) - f(s) |  \leq   \int_r^\infty| f'(t)| dt   \leq \Big( \int_r^{\infty} f'(t)^2 t^{\g} dt \Big)^{1/2} \Big( \int_r^{\infty} t^{-\g} dt \Big)^{1/2}
\end{equation}
by H\"{o}lder's inequality. As both integrals on the right-hand side of \eqref{eq:regularity f} converge, for any $\e>0$, we may take $r$ large enough such that the right-hand side is bounded by $\e$, so the limit of $f(r)$ as $r \to \infty$ exists.

We claim that this limit must be equal to zero.
Indeed, since $Y$ is obtained as a completion of $C_0^\infty$, if we apply \eqref{eq:regularity f} to a sequence $f_k \in C_0^{\infty}([0,\infty))$ converging in $Y$ to
$f$ and we let $s \to \infty$, we get
$$ |f_k(r) | \leq 
 \Big( \int_r^{\infty} f_k'(t)^2 t^{\g} dt \Big)^{1/2} \Big( \int_r^{\infty} t^{-\g}dt \Big)^{1/2},$$
thus, by letting $k\to \infty$,
$$ |f(r) | \leq \Big( \int_r^{\infty} f'(t)^2 t^{\g} dt \Big)^{1/2} \Big( \int_r^{\infty} t^{-\g}dt \Big)^{1/2}.$$
Since the right-hand side tends to zero as $r\to\infty$, this proves the claim.\\

\noindent{\it{Step 2: Qualitative Decay of $f'$.}}
For $r> 0$, \eqref{SL equation} can be written as
\begin{equation}\label{Dividing Through} L'f :=  f''  + a f' + b f = 0
\end{equation}
where $$a = \frac{P'}{P} \ \ \text{ and } \ \ b=\frac{-Q + w\alpha}{P}.$$
Fixing $\e >0$, an explicit computation shows that there exists $r_0$ large enough such that
$$ \frac{(1-\e)(n-1)}{p-1}\, \frac{1}{r}  \leq a \leq \frac{(1+\e) (n-1)}{p-1}\, \frac{1}{r}$$
and 
$$-\frac{\mu}{p-1}\frac{1}{r^2}+\frac{ (1-\e)  c_{p,n}\alpha}{ r^{(3p-2)/(p-1)}}  \leq b \leq 
- \frac{\mu}{p-1}\frac{1}{r^2} + \frac{(1+\e)  c_{p,n}\alpha}{ r^{(3p-2)/(p-1)}} $$
for $r\geq r_0$, where $c_{n,p}$ is a positive constant depending only on $n$ and $p$.
Asymptotically, therefore, our equation behaves like
$$f'' + \frac{n-1}{p-1} \frac{f'}{r}+ \Big(\frac{  c_{p,n}\alpha}{ r^{p'}} - \frac{\mu}{p-1}\Big) \frac{f}{r^2}=0.$$
\\
If $f$ is a solution of \eqref{SL equation}, then squaring \eqref{Dividing Through} on $[r_0, \infty)$, we obtain
$$|f ''|^2  \leq 2\left(\Big( \frac{n-1}{p-1}+ \e\Big)\frac{f'}{r}\right)^2 + 2\left(\Big(\frac{ (1+\e) c_{p,n}\alpha }{ r^{p'}}+ \frac{\mu}{p-1} \Big) \frac{f}{r^2} \right)^2 \leq C(|f|^2  + |f'|^2).$$
Integrating on $[R, R+1]$ for $R \geq r_0$ implies
$$\int_{R}^{R+1} |f''|^2 \leq C \int_R^{R+1} |f'|^2 + C \int_R^{R+1} |f|^2.$$ 
Step $1$ and Remark~\ref{integral converges} ensure that both terms on the right-hand side go to zero.
Applying Morrey's embedding to $f'\eta_R$, where $\eta_R$ is a smooth cutoff equal to $1$ in $[R,R+1]$, we determine that
$\| f'\|_{L^{\infty}([R, R+1])} \to 0$ as $R\to \infty$, proving that $f'(r) \to 0$ as $r\to \infty.$ 
\\

\noindent{\it{Step 3: Quantitative Decay of $f$ and $f'$.}}
Standard arguments (see for instance \cite[VI.6]{courant2008methods}) show that, also in our case,
the $i$th eigenfunction $f$ of $L$ has at most $i-1$ interior zeros; in particular, $f(r)$ does not change sign for $r$ sufficiently large. Without loss of generality, we assume that eventually $f\geq 0$. 

Taking $r_0$ as in Step $2$ and applying the operator $L'$ defined in \eqref{Dividing Through} to the function $g =Cr^{-\beta}+c$, $c>0$, for $r\geq r_0$ gives
\begin{align*}
L'g &\leq C \beta(\beta+1) r^{-\beta -2} - \frac{(1-\e)(n-1)}{p-1} C\beta r^{-\beta - 2} +\Big(\frac{(1+\e) c_{p,n} \alpha}{ r^{(3p-2)/(p-1)}}- \frac{\mu}{p-1}\Big)(C r^{-\beta - 2 }+c)\\
& \leq C r^{-\beta -2} \Big(\beta(\beta+1)- \frac{(1-\e)(n-1)}{p-1}\beta + \frac{(1+\e)  c_{p,n}\alpha}{ r^{p'}}\Big) + \frac{(1+\e)  c_{p,n}\alpha}{ r^{(3p-2)/(p-1)}}c.
\end{align*}
For any $0<\beta <(n-p)/(p-1)$, $r_0$ may be taken large enough (and therefore $\e$ small enough) such that
$$L'g<0 \quad \text{ on }\quad[r_0, \infty),$$ so $g$ is a supersolution of the equation on this interval.

Choosing  $C= f(r_0) r_0^{\beta}$ and $c>0$, then $(g-f)(r_0) > 0$ and $(g-f)(r) \to c>0$ as $r \to \infty$.
Since $L'(g-f) < 0,$ we claim that $g-f>0$ on $(r_0, \infty)$. Indeed, otherwise, $g-f$ would have a negative minimum at some $r \in (r_0, \infty)$, implying that
$$(g-f)(r) \leq 0, \ \ (g-f)'(r)=0, \ \ \text{ and } \ \ (g-f)''(r) \geq 0,$$ forcing  $L'(g-f) \geq 0,$ a contradiction.
This proves that $0 \leq f \leq g$ on $[r_0,\infty)$, and since $c>0$ was arbitrary, we determine that
$f \leq C r^{-\beta}$
on $[r_0,\infty)$.\\

We now derive bounds on $f'$:
by the fundamental theorem of calculus and using \eqref{Dividing Through} and the bound on $f$ for $r\geq r_0$, we get
\begin{align*}
|f'(r) | &= \Big| \int_r^{\infty} f''\Big| \leq \frac{C}{r} \left|\int_r^{\infty} f'\right| +C \left|\int_r^{\infty} t^{-\beta -2} \right| \leq \frac{C}{r} |f(r) | +  \frac{C}{\beta+2} r^{-\beta -1} \leq C  r^{-\beta -1}.
\end{align*}
\end{proof}

With these asymptotic decay estimates in hand, we are ready to prove Lemma~\ref{Sturm-Liouville}.
\begin{proof}[Proof of Lemma~\ref{Sturm-Liouville}]
We begin with the following remark about uniqueness of solutions.
If $f_1$ and $f_2$ are two solutions of \eqref{SL equation} 
and  $$f_1(r_0) = f_2(r_0), \quad f_1'(r_0)  =f_2'(r_0)$$ for some $r_0 >0$, then $f_1=f_2$ on $[0,\infty)$.
Indeed, for $r > 0$, we may express our equation as in \eqref{Dividing Through}.
As $a$ and $b$ are continuous on $(0,\infty),$ the standard proof of uniqueness for (non-degenerate) second order ODE holds. 
%See, for instance, \cite{HurMIT}. 
Once $f_1=f_2$ on $(0,\infty)$, they are also equal at $r=0$ by continuity.\\

\noindent {\it{Proof of (1).}} Suppose $\alpha$ is an eigenvalue of $L$ with $f_1$ and $f_2$ satisfying \eqref{SL equation}.
In view of the uniqueness remark, if there exists $r_0 >0$ and some linear combination $f$ of $f_1$ and $f_2$ such that $f(r_0) = f'(r_0 ) = 0,$ then $f$ is constantly zero and $f_1$ and $f_2$ are linearly dependent.
Let
 $$W(r) = W(f_1, f_2) (r)  := \det \left[ \begin{array}{cc} f_1 & f_2 \\ f_1' &  f_2' \end{array} \right] (r)$$
denote the Wronskian of $f_1$ and $f_2$.  This is well defined for $r>0$
(since $f_1$ and $f_2$ are $C^2$ there) and a standard computation shows that $ (PW)'=0$ on $(0,\infty)$:  
indeed, since
$W' = f_1f_2''  - f_2f_1''$, we get
$$(PW)' = PW' + P'W= P( f_1f_2''  - f_2f_1'') + P'(f_1 f_2 ' - f_2f_1'),$$
and by adding and subtracting the term $(\alpha w - Q)f_1f_2$ it follows that
\begin{align*}(PW)'
& =f_1 \left(P f_2'' + P' f_2'  + (\alpha w - Q)f_2 \right) - f_2\left( P f_1'' + P'f_2' + (\alpha w -Q) f_1\right) = 0.
\end{align*}
Thus $PW$ is constant on $(0,\infty)$. 
We now show that that $PW$ is continuous up to $r=0$ and that $(PW)(0) = 0.$
Indeed, \eqref{SL equation} implies that
$$(Pf'_i)' = (Q-\alpha w)f_i$$ for $i=1,2$. 
The right-hand side is continuous, so $(Pf_i')'$ is continuous, from which it follows easily that $PW$ is also continuous on $[0,\infty)$.

To show  that $(PW)(0)=0$, we first prove that $(Pf_i')(0)=0$. Indeed, let
$ c_i:=(Pf_i')(0)$. If $c_i\neq 0$, then keeping in mind Remark~\ref{integral converges}, 
\begin{equation}\label{Limit Value}  f_i'(r) \approx \frac{c_i}{P(r)}\approx \frac{c_i}{r^{(p-2)/(p-1) + n -1}} \qquad \text{for $r \ll 1$},\end{equation}
therefore
 \begin{align*}
 \int_0^R |v'|^{p-2} |f'|^2 r^{n-1} dr \gtrsim \int_0^{R} r^{(p-2)/(p-1)+n-1} |f'|^2 dr \gtrsim \int_0^R  \frac{dr}{r^{(p-2)/(p-1)+n-1} }=+\infty,
 \end{align*}
contradicting the fact that $f \in Y$. Hence, we conclude that $\underset{r\to 0}{\lim} (Pf_i')(r) = 0,$
and using this fact we obtain
$$(PW)(0) = \underset{r \to 0}{\lim}\,( Pf_1' f_2 - Pf_2'f_1)
= \underset{r \to 0}{\lim}\, ( Pf_1' )\ \underset{r \to 0}{\lim}\, f_2
- \underset{r \to 0}{\lim}\, ( P f_2')\  \underset{r \to 0}{\lim}\, f_1=0.$$
Therefore $(PW)(r)=0$ for all $r\in [0,\infty)$. Since $P(r) >0 $ for $r> 0$, we determine that $W(r) = 0 $ for all $r > 0$. In particular, given $r_0\in (0,\infty)$,
%We now claim if $W(f_1, f_2)(t_0)= 0$ for some $t_0\in (0,\infty)$, then $f_1$ and $f_2$ are linearly dependent on $(0,\infty)$.
there exist $c_1$, $c_2 $ such that $c_1^2 + c_2^2 \neq 0$ and 
\begin{align*}
c_1f_1(r_0 ) + c_2 f_2 (r_0) & = 0,\\
c_1f_1'(r_0 ) + c_2 f_2' (r_0) & = 0.
\end{align*}
Then $f := c_1f_1 + c_2 f_2$ solves \eqref{SL equation} and $f(r_0)  =f'(r_0) = 0$. By uniqueness, $f\equiv 0$ for all $t \in (0,\infty)$, and so $f_1=c f_2$.\\

\noindent {\it{Proof of (2).}} \  Thanks to our preliminary estimates on the behavior of $f_i$ at infinity, the following is an adaptation of the standard argument 
in, for example, \cite[VI.6]{courant2008methods}.

Suppose  that $f_1$ and $f_2$ are eigenfunctions of $L$ corresponding to eigenvalues $\alpha_1$ and $\alpha_2$ respectively, with $\alpha_1 < \alpha_2$, that is,
$$ (Pf_i')' - Qf_i + \alpha_i w f_i = 0.$$
Our first claim is that between any two consecutive zeros of $f_1$ is a zero of $f_2$,  including zeros at infinity.  
Note that 
\begin{equation}\label{PW prime} \begin{split}
(PW)'  & = P[f_1f_2''-  f_2 f_1''] + P'[f_1f_2' - f_2 f_1'] \\
& = f_1[ (Pf_2')'  + (\alpha_2 -Q) f_2] -
f_2 [ (Pf_1')' + (\alpha_1w - Q) f_1] + (\alpha_1 - \alpha_2 )w f_1 f_2\\
& =(\alpha_1 - \alpha_2 )w f_1 f_2.
\end{split}\end{equation}

Suppose that $f_1$ has consecutive zeros at $r_1$ and $r_2$, and suppose for the sake of contradiction that $f_2$ has no zeros in the interval $(r_1, r_2)$. With no loss of generality, we may assume that $f_1$ and $f_2 $ are both nonnegative in $[r_1, r_2]$. \\
\noindent \textit{Case 1:} Suppose that $r_2<\infty$. Then integrating \eqref{PW prime} from $r_1$ to $r_2$ implies
\begin{align*} 0&>  (\alpha_1 - \alpha_2 ) \int_{r_1}^{r_2} w f_1 f_2 = (PW)(r_2) -(PW)(r_1) \\
&  = P(r_2) [f_1(r_2) f_2'(r_2) - f_1'(r_2) f_2(r_2) ] -  P(r_1) [f_1(r_1) f_2'(r_1) - f_1'(r_1) f_2(r_1) ]\\
& = -P(r_2) f_1'(r_2) f_2(r_2) + P(r_1) f_1'(r_1) f_2(r_1).
\end{align*}
The function $f_1$ is positive on $(r_1, r_2)$, so $f_1'(r_1) \geq 0$ and $f_1'(r_2) \leq 0$.
Also, since $f_1(r_1)=f_1(r_2)=0$ we cannot have $f_1'(r_1) = 0$ or $f_1'(r_2)=0$, as otherwise $f_1$ would vanish identically.
Furthermore, $f_2$ is nonnegative on $[r_1, r_2]$, so we conclude that the right-hand side is nonnegative, giving us a contradiction.\\
\noindent \textit{Case 2:} Suppose that $r_2 = \infty$. Again integrating the identity \eqref{PW prime} from $r_1$ to $\infty$, we obtain
\begin{equation}\label{Zero at infinity}\begin{split} 
 0&>  (\alpha_1 - \alpha_2 ) \int_{r_1}^{\infty} w f_1 f_2 =  \underset{r\to \infty}{\lim}(PW)(r) - (PW)(r_1)\\
&  = \underset{r\to \infty}{\lim}[P(r) (f_1(r) f_2'(r) - f_1'(r) f_2(r) )] -  P(r_1) (f_1(r_1) f_2'(r_1) - f_1'(r_1) f_2(r_1) ).
\end{split}\end{equation}
We notice that Lemma~\ref{Reg and Asym} implies that
$$  \underset{r\to \infty}{\lim}[P(r) (f_1(r) f_2'(r) - f_1'(r) f_2(r) )]  =0.$$
Indeed, taking $\frac{n-p}{2(p-1)}<\beta < \frac{n-p}{p-1}$,
$$| f_1' f_2 - f_1f_2'| \leq |f_1'||f_2| + |f_1| |f_2'| \leq C r^{-2\beta-1},$$
and, recalling Remark~\ref{integral converges},
$$P(r) \leq C r^{(n-1)/(p-1)},$$
implying that
$$P\,|f_1' f_2 - f_1f_2'| \leq C r^{\g}\to 0,$$
where $\g =- 2\beta - 1 + \frac{n-1}{p-1} < 0.$
Then \eqref{Zero at infinity} becomes
$$0 >
%& =  \underset{t\to \infty}{\lim}[P(t) (f_1(t) f_2'(t) - f_1'(t) f_2(t) )]  + P(t_1) ( f_1'(t_1) f_2(t_1) )\\
-P(r_1) f_1'(r_1) f_2(r_1).$$ 
Since $f_1'(r_1) >0$ and $f_2(r_1)\geq 0$ (see the argument in Case $1$), this gives us a contradiction.

We now claim that $f_2$ has a zero in the interval $[0, r_1)$, where $r_1$ is the first zero of $f_1$. Again, we assume for the sake of contradiction that $f_2$ has no zero in this interval and that, without loss of generality, $f_1$ and $f_2$ are nonnegative in $[0,r_1]$.
Integrating \eqref{PW prime} implies
\begin{equation}\label{Zero near zero} 0>  (\alpha_1 - \alpha_2 ) \int_{0}^{r_1} w f_1 f_2 = PW(r_1) - PW(0).
\end{equation}
The same computation as in the proof of Part $(1)$ of this lemma implies that $(PW)(0) = 0$,
so \eqref{Zero near zero} becomes
$$0> -P(r_1)  f_1'(r_1) f_2(r_1), $$
once more
giving us a contradiction.\\

The first eigenfunction of an operator is always positive in the interior of the domain, so the second eigenfunction of $L$ must have at least one interior zero by orthogonality. Thus the claims above imply that the $i$th eigenfunction has at least $i-1$ interior zeros. On the other hand, as mentioned in the proof of Lemma~\ref{Reg and Asym}, the standard theory also implies that  the $i$th eigenfunction has at most $i-1$ interior zeros, and the proof is complete.
\end{proof}

\section{Appendix}\label{appendix}
In this section we give the proofs of Lemma~\ref{numbers} and of the polar coordinates form of the operator $\DIVV(A(x) \na \vphi)$ given in \eqref{polarA}.

%%%%%%%%%%%%%%%%%
 \begin{proof}[Proof of \eqref{polarA}]
We will use the following classical relations:
\begin{align*}
\partial_{r} \hat{r} = 0& \qquad \partial_{r} \hat{\theta}_i = 0, \qquad \partial_{\theta_i}\hat{r} = \hat{\theta}_i , \qquad
\partial_{\theta_i}\hat{\theta}_i = -\hat{r}, \qquad
\partial_{\theta_j}\hat{\theta}_i =0\quad \text{for }i \neq j.
\end{align*} 
The chain rule implies that
\begin{align}\label{operator}
\text{div} (A(x) \na \varphi) & = \text{tr}( A(x) \na^2 \varphi ) + \text{tr}(\na A(x) \na \varphi).
\end{align}
We compute the two terms on the right-hand side of \eqref{operator} separately.
For the first, we begin by computing the Hessian of $\varphi$ in polar coordinates, starting from
\begin{equation}\label{gradient polar}
\na \vphi = \partial_r \varphi \, \hat{r} + \frac{1}{r}\sum_{j=1}^{n-1}  \partial_{\theta_j} \varphi \,\hat{\theta}_j,
\end{equation}
We have
\begin{align*}
\na^2 \varphi &
= \partial_r \Big( \partial_r \varphi \, \hat{r} + \frac{1}{r}\sum_{j=1}^{n-1}  \partial_{\theta_j} \varphi \,\hat{\theta}_j\Big) \hat{r}
+ \frac{1}{r} \sum_{i=1}^{n-1}\partial_{\theta_i} 
\Big( \partial_r \varphi\, \hat{r} + \frac{1}{r}\sum_{j=1}^{n-1} \partial_{\theta_j} \varphi \, \hat{\theta}_j\Big)\hat{\theta}_i\\ 
& 
= \partial_{rr} \varphi \, \hat{r} \otimes \hat{r}
-\frac{1}{r^2}\sum_{j=1}^{n-1}\partial_{\theta_j}\varphi\, \hat{\theta}_j\otimes \hat{r}
+\frac{1}{r} \sum_{j=1}^{n-1}\partial_{\theta_j r} \varphi\, \hat{\theta}_j\otimes \hat{r} \\
& +\frac{1}{r} \sum_{i=1}^{n-1}  \partial_{\theta_i r} \varphi \,\hat{r} \otimes \hat{\theta}_i
+\frac{1}{r}\sum_{i=1}^{n-1} \partial_r \varphi \,\theta_i \otimes \theta_i 
+ \frac{1}{r^2} \sum_{i=1}^{n-1} \sum_{j=1}^{n-1} \partial_{\theta_i \theta_j}
\varphi\, \hat{\theta}_j\otimes \hat{\theta}_i
- \frac{1}{r^2}\sum_{i=1}^{n-1}
 \partial_{\theta_i} \varphi\,  \hat{r} \otimes \hat{\theta}_i.
\end{align*}
In order to compute $A(x) \na^2\varphi$, we note that
\begin{equation*} 
(\hat{r} \otimes \hat{r} )(\hat{r} \otimes \hat{r} ) =\hat{r} \otimes \hat{r},  
\qquad (\hat{r} \otimes \hat{r} )(\hat{\theta}_j \otimes \hat{\theta}_i ) = 0, \qquad (\hat{r} \otimes \hat{r} )(\hat{r} \otimes\hat{\theta}_i)  = 0,
\qquad
 (\hat{r} \otimes \hat{r} )(\hat{\theta}_i \otimes \hat{r} )  =\hat{\theta}_i \otimes \hat{r}.
 \end{equation*}
Thus we have
\begin{align*}
A(x) \na^2 \varphi & = (p-2)|\na v|^{p-2} \hat{r}\otimes \hat{r} (\na^2 \varphi)+ |\na v|^{p-2} \text{Id}(\na^2 \varphi)\\
&=
(p-2) |\na v|^{p-2} \Big[ \partial_{rr} \varphi\, \hat{r} \otimes \hat{r} - \frac{1}{r^2} \sum_{j=1}^{n-1}\partial_{\theta_j} \varphi\, \hat{\theta}_j\otimes \hat{r} 
+ \frac{1}{r}\sum_{j=1}^{n-1} \partial_{\theta_j r } \varphi \, \hat{\theta}_j \otimes \hat{r} \Big]\\
& + |\na v|^{p-2}\Big[\partial_{rr} \varphi\, \hat{r} \otimes \hat{r}
-\frac{1}{r^2} \sum_{j=1}^{n-1}\partial_{\theta_j}\varphi\, \hat{\theta}_j\otimes \hat{r}
+\frac{1}{r}\sum_{j=1}^{n-1} \partial_{\theta_j r} \varphi\, \hat{\theta}_j\otimes \hat{r} \\
& + \frac{1}{r}\sum_{i=1}^{n-1} \partial_{\theta_i r} \varphi \,\hat{r} \otimes \hat{\theta}_i
+\frac{1}{r}\sum_{i=1}^{n-1}\partial_r \varphi \,\theta_i \otimes \theta_i 
+ \frac{1}{r^2}\sum_{i=1}^{n-1}\sum_{j=1}^{n-1} \partial_{\theta_i \theta_j} \varphi \,\hat{\theta}_j\otimes \hat{\theta}_i
- \frac{1}{r^2} \sum_{i=1}^{n-1}\partial_{\theta_i} \varphi\, \hat{r} \otimes \hat{\theta}_i\Big],
\end{align*}
and the first term in \eqref{operator} is
\begin{equation}\label{term 1} \begin{split} 
 \text{tr}( A(x) \na^2 \varphi ) 
 & = (p-1) |\na v|^{p-2} \partial_{rr} \varphi +  \frac{n-1}{r} |\na v|^{p-2}\partial_r \varphi +\frac{1}{r^2}|\na v|^{p-2}\sum_{i=1}^{n-1} \partial_{\theta_i \theta_i } \varphi.
 \end{split}\end{equation}
Now we compute the second term in \eqref{operator}, starting by computing $\na A(x)$. We reintroduce the slight abuse of notation by letting $v(r) = v(x)$, so $v' = \pa_r v$, $v'' = \pa_{rr} v$. 
Note that $\partial_{\theta} \text{Id} =\partial_r \text{Id} = 0$, thus
\begin{align*}
\na A(x)  &= \partial_r A(x) \otimes \hat{r}
 + \frac{1}{r}\sum_{j=1}^{n-1} \partial_{\theta_j} A(x) \otimes \hat{\theta}_j \\
 &= (p-2)^2 |v'|^{p-4}  v' \,  v'' \,\hat{r} \otimes \hat{r}\otimes \hat{r}  +  (p-2)|v'|^{p-4} v' \, v''\, \text{Id}\otimes \hat{r}\\
 & + \frac{p-2}{r} \sum_{j=1}^{n-1}\Big[ |v'|^{p-2} \hat{\theta}_j\otimes \hat{r} \otimes \hat{\theta}_j + |v'|^{p-2} \hat{r} \otimes \hat{\theta}_j \otimes \hat{\theta}_j\Big].
\end{align*}
Recalling \eqref{gradient polar}, we then have
\begin{align*}
\na A(x)
 \na \varphi 
 & = 
(p-2)^2 |v'|^{p-4}v'\, v''\, \partial_r \varphi
(\hat{r} \otimes \hat{r} \otimes \hat{r} )\hat{r} 
+ (p-2) |v'|^{p-4}  v'\,  v''\, \partial_r \varphi (\text{Id} \otimes \hat{r} )\hat{r}\\
& + \frac{p-2}{r} \sum_{j=1}^{n-1}\left[  |v'|^{p-2} \partial_r \varphi
 (\hat{\theta}_j\otimes \hat{r}\otimes \hat{\theta}_j ) \hat{r}
+ |v'|^{p-2}\partial_r \varphi (\hat{r} \otimes \hat{\theta}_j \otimes \hat{\theta}_j) \hat{r}\right] \\
&
+ \frac{1}{r}\sum_{i=1}^{n-1}
 \left[ (p-2)^2 |v'|^{p-4}  v' v'' \partial_{\theta_i} \varphi (\hat{r} \otimes \hat{r}\otimes\hat{r} )\hat{\theta}_i
+ (p-2) |v'|^{p-4}  v'  v'' \partial_{\theta_i} \varphi (\text{Id} \otimes \hat{r} )\hat{\theta}_i \right]\\
& +\frac{p-2}{r^2} \sum_{i=1}^{n-1} \sum_{j=1}^{n-1}
\left[  | v'|^{p-2} \partial_{\theta_i} \varphi (\hat{\theta}_j\otimes \hat{r} \otimes \hat{\theta}_j) \hat{\theta}_i
+  |v'|^{p-2} \partial_{\theta_i} \varphi (\hat{r} \otimes \hat{\theta}_j \otimes \hat{\theta}_j )\hat{\theta}_i\right],
\end{align*}
where we used that $(a\otimes b\otimes c)d = (a\cdot d) b\otimes c.$ Writing out these terms gives 
\begin{align*}
\na A(x) \na \vphi
&=
(p-1)(p-2) |v'|^{p-4}v' v'' \partial_r \varphi \,
\hat{r} \otimes \hat{r} 
+ \frac{p-2}{r} |v'|^{p-2}\sum_{j=1}^{n-1}\partial_r \varphi \, \hat{\theta}_j \otimes \hat{\theta}_j\\
&+ \frac{p-2}{r} |v'|^{p-4} v' v''\sum_{j=1}^{n-1} \partial_{\theta_j} \varphi\, \hat{\theta}_j\otimes \hat{r} +\frac{p-2}{r^2}  |v'|^{p-2}\sum_{j=1}^{n-1} \partial_{\theta_j} \varphi \, \hat{r} \otimes \hat{\theta}_j,
\end{align*}
thus the second term in \eqref{operator} is
\begin{equation}\label{term 2}
\text{tr}(\na A(x) \na \varphi)  
= (p-1)(p-2)|\na v|^{p-4} \partial_r v\, \partial_{rr} v\, \partial_r \varphi+ \frac{(n-1)(p-2)}{r} |\na v|^{p-2} \partial_{r} \varphi.
\end{equation}
Combining \eqref{term 1} and \eqref{term 2}, \eqref{operator} implies that
\begin{align*}
\text{div} (A(x) \na \varphi) &=
(p-1) |\na v|^{p-2} \partial_{rr} \varphi +  \frac{(p-1)(n-1)}{r} |\na v|^{p-2}\partial_r \varphi +\frac{1}{r^2}|\na v|^{p-2}\sum_{j=1}^{n-1} \partial_{\theta_j \theta_j } \varphi
\\ &+
(p-1)(p-2)|\na v|^{p-4} \partial_r v\, \partial_{rr} v \,\partial_r \varphi,
\end{align*}
as desired.

\end{proof}

 \bibliographystyle{plain}
\bibliography{references4}

\end{document}